\documentclass[a4paper]{amsart}
\usepackage{amscd,amsthm,amsfonts,amssymb,amsmath,esint}
\usepackage{amsmath,tikz-cd}
\usepackage[colorlinks=true]{hyperref}
\usepackage{fullpage}
\usepackage[all]{xy}
\usepackage{mathtools}
\usepackage{color}

    \newcommand{\BC}{{\mathbb {C}}} 
     
    \newcommand{\BG}{{\mathbb {G}}}

     \newcommand{\BR}{{\mathbb {R}}}

     \newcommand{\BZ}{{\mathbb {Z}}}

    \newcommand{\RM}{{\mathrm {M}}} 
    \newcommand{\RO}{{\mathrm {O}}} 
     
    \newcommand{\RS}{{\mathrm {S}}} 
    \newcommand{\RU}{{\mathrm {U}}}

    \newcommand{\fg}{{\mathfrak{g}}}

     \newcommand{\ft}{{\mathfrak{t}}}

    \newcommand{\diag}{{\mathrm{diag}}}

    \newcommand{\ed}{\mathrm{ed}}

    \newcommand{\Flag}{\mathrm{Flag}}
    \newcommand{\CFlag}{\mathrm{CFlag}}

     \newcommand{\GL}{{\mathrm{GL}}}

    \newcommand{\Ker}{{\mathrm{Ker}}}

    \newcommand{\SL}{{\mathrm{SL}}}
     
    \newcommand{\SO}{{\mathrm{SO}}}
    \newcommand{\Sp}{{\mathrm{Sp}}}
    \newcommand{\SU}{{\mathrm{SU}}}
    \newcommand{\Sym}{{\mathrm{Sym}}}
    \newcommand{\Stab}{{\mathrm{Stab}}}

    \newcommand{\tr}{{\mathrm{tr}}}

    \newcommand{\vol}{{\mathrm{vol}}}

    \newcommand{\incl}{\hookrightarrow}

    \newcommand{\bs}{\backslash}

    \theoremstyle{plain}
    \newtheorem{thm}{Theorem}[section] \newtheorem{cor}[thm]{Corollary}
    \newtheorem{lem}[thm]{Lemma}
    
        \newtheorem{ques}[thm]{Question} \newtheorem{prop}[thm]{Proposition}
    
    \newtheorem{defn}[thm]{Definition}
     \newtheorem{lem-defn}[thm]{Lemma-Definition}

\theoremstyle{remark} \newtheorem{remark}[thm]{Remark}
\theoremstyle{remark} 
\theoremstyle{remark} \newtheorem{example}{Example}

    \numberwithin{equation}{section}

\begin{document}
\title{Minimal dimension equivariant embeddings of real and complex flag manifolds into Euclidean spaces}
\author{Zhongzi Wang, and Hang Yin}
\begin{abstract}
    We determine the minimal equivariant embedding dimension of orthgonal groups acting on real flag manifolds and unitary groups acting on complex flag manifolds. The minimal embedding dimension is achieved at isospectral model.
\end{abstract}
\maketitle

\section{Introduction}


A famous theorem in differential topology is any smooth $n$-dimensional manifold can be smoothly embedded into $\mathbb{R}^{2n}$ \cite{Wh44}.  There are many results concerning the embedding of manifolds see e.g. Hirsch \cite{Hi61} proved every orientable $3$-manifold smoothly embedded into $\mathbb{R}^5$, and actually the dimension $5$ is the best possible for general 3-manifolds.  
The determination for the minimal dimension of smooth embedding of a specific manifold is in general a very hard problem, 
and there are many results on the estimate of embedding dimensions.

We first recall the definition of equivariant embedding:

\begin{defn} Suppose $M$ is a smooth $n$-dimensional manifold and $G$ be a Lie group acting smoothly on $M$. A $G$-equivariant embedding $e$ is a smooth embedding $e: M\to \mathbb{R}^N$ for some positive integer $N$ together with a Lie group representation $\rho: G\to {\rm O}(N)$, such that for any $g\in G$, $\rho(g)\circ e=e\circ g$.
\end{defn}
Suppose $M$ is a smooth compact manifold with a Lie group $G$-action. Mostow \cite{Mos57}, also Palais \cite{Pal57}, proved $M$ has an $G$-equivariant embedding into $\mathbb{R}^{N}$ for some positive integer $N$. 
We next recall the definition of a homogeneous manifold:

\begin{defn} Suppose $M$ is a smooth compact $n$-dimensional manifold and $G$ be a compact Lie group acting smoothly on $M$. We say $M$ is a homogeneous manifold if the $G$-action on $M$ is transitive.
\end{defn}

The general problem is

\begin{ques}
Suppose $M$ is a smooth compact $n$-dimensional manifold and $G$ be a compact Lie group acting smoothly on $M$. If $M$ is a homogeneous manifold, let $\text{ed}_G(M)$ be the minimal $N$ such that there exists a $G$-equivariant embedding from $M$ to $\mathbb{R}^N$. Determine the value of $\text{ed}_G(M)$.
\end{ques}

There are results concerning the upper bound for the dimension of equivariant embedding of manifolds see e.g. \cite{Was69}. When $G$ is a finite group, see also \cite{Wan22}. 

There has been studies on minimal dimension of  equivariant embedding of finite group action on surfaces, see  \cite{WWZZ}, \cite{WWW}. Recently, \cite{LY} determined the minimal dimension of equivariant embedding real flag manifolds with orthogonal group action except finitely many cases. In this paper, we determined the minimal dimension for equivariant embedding for arbitrary real flag manifolds and complex flag manifolds.



The core method for studying $\text{ed}_G(M)$ will be the use of the representation theory of Lie groups. Take a basepoint $x_0\in M$. Then the point stabilizer $\Stab_{x_0}(G)=\{g\in G\mid g\cdot x_0=x_0\}$ is a closed subgroup of $G$, and we have an diffeomorphism $M\cong G/\Stab_{x_0}(G)$. 

The problem of equivariant embedding is equivalent to the problem of finding a representation space of $G$ that contains a point with stabilizer equals to $\Stab_{x_0}(G)$. For representations of Lie groups, see \cite{FH91}.

For an $n\times n$-matrix $T$, denote $T^t$ the trasnpose of $T$.
If $T\in M_n(\mathbb{C})$, denote $T^*$ the conjugate transpose of $T$.
\subsection{Isospectral model over $\mathbb{R}$}

Let $\text{Sym}_n(\mathbb{R})$ denote the space of symmetric $n\times n$-matrices over $\mathbb{R}$. Let $\text{SO}_n(\mathbb{R})$ denote the special orthogonal group. Note that $\text{SO}_n(\mathbb{R})$ acts on $\text{Sym}_n(\mathbb{R})$ by 
$A\cdot M=AMA^{t}$. So $\Sym_n(\mathbb{R})$ is a representation space of the special orthogonal group $\text{SO}_n(\mathbb{R})$. The irreducible decomposition of this representation space is
$$\text{Sym}_n(\mathbb{R})=\text{Sym}^{tr=0}_{n}(\mathbb{R})\oplus \mathbb{R}$$
where $\text{Sym}_n^{tr=0}(\mathbb{R})=\{T\in \text{Sym}_n(\mathbb{R}): trT=0\}.$
We say the representation space $\text{Sym}_n(\mathbb{R})$ is the isospectral model and the representation space $\text{Sym}_n^{tr=0}(\mathbb{R})$ is the traceless isospectral model.
\subsection{Isospectral model over $\mathbb{C}$}
Let $\text{Her}_n(\mathbb{R})$ denote the space of Hermitian $n\times n$ matrices, i. e. $\text{Her}_n(\mathbb{R})=\{T\in M_n(\mathbb{C}): T=T^*\}$. Let $\text{SU}_n$ be the special unitary group.
Note that $\text{SU}_n$ acts on $\text{Her}_n(\mathbb{R})$ by 
$A\cdot M=AMA^{*}$, where $A^*=\overline{A^t}$. So $\text{Her}_n(\mathbb{R})$ is a representation space of the special unitary group $\text{SU}_n$. The irreducible decomposition of this representation space is
$$\text{Her}_n(\mathbb{R})=\text{Her}^{tr=0}_{n}(\mathbb{R})\oplus \mathbb{R}$$
where $\text{Her}_n^{tr=0}(\mathbb{R})=\{T\in \text{Her}_n(\mathbb{R}): trT=0\}.$
We say the representation space $\text{Her}_n(\mathbb{R})$ is the isospectral model and the representation space $\text{Her}_n^{tr=0}(\mathbb{R})$ is the traceless isospectral model.

\subsection{Real and complex flag manifolds}
The real flag manifold $\Flag(n_1,...,n_r,\mathbb{C}^n)$ is a smooth manifold 
with a transitive $\text{SO}_n(\mathbb{R})$-action. 
There is an $\SO_n(\mathbb{R})$-equivariant embedding from $\Flag(n_1,...,n_r)$ into the traceless isospectral model $\Sym_n^{tr=0}(\mathbb{R}).$
Let $A$ be the block diagonal matrix of diagonal entries $a_1I_{k_1},...,a_rI_{k_r}$ where $a_1,...,a_r$ are distinct real numbers with $\sum_{i=1}^ra_ik_i=0$. The image of the $\SO_n(\mathbb{R})$-equivariant embedding of the flag manifold $\Flag(k_1,...,k_r;\mathbb{R}^n)$ is the $\SO_n$-orbit of $A$ in $\Sym_n^{tr=0}(\mathbb{R}).$

The complex flag manifold $\Flag(n_1,...,n_r,\mathbb{C}^n)$ is a smooth complex manifold 
with a transitive $\SU_n$-action. 
There is an $\SU_n$-equivariant embedding from $\Flag(n_1,...,n_r)$ into the traceless isospectral model $\text{Her}_n^{tr=0}(\mathbb{R}).$ Let $A$ be the same matrix as in the real case.
The image of the $\SU_n$-equivariant embedding of the flag manifold $\CFlag(k_1,...,k_r;\mathbb{C}^n)$ is the $SU_n$-orbit of $A$ in $\text{Her}_n^{tr=0}(\mathbb{R}).$

The equivariant embedding for the real flag manifold is due to Lim-Ye. 
\subsection{Basic of flag variety}
\begin{defn}\label{real flag}
	Let $\mathbb{K}$ be a field. Let $V$ be an $n$-dimensional vector space over $\mathbb{K}$. Let $0<k_1<...<k_r<n$. 
    \begin{enumerate}
    \item A flag of type $(k_1,...,k_r)$ in $V$ is a filtration of linear subspaces $W_1\subset ... \subset W_r\subset V$ such that $\text{dim}W_i=k_i.$ 
    \item When $k_i=i$, the flag is said to be a \textbf{full flag}.

    \item Given $0<k_1<...<k_r<n$, the set of all flags of type $(k_1,...,k_r)$ in $V$ can be equipped with the structure of a smooth projective algebraic variety over $\mathbb{K}$. It is called the flag variety of type $(k_1,...,k_r)$ of $V$, and is denoted as $\text{Flag}(k_1,...,k_r, V)$. 
    \item When $\mathbb{K}=\mathbb{R}$ or $\mathbb{C}$, we also call it a flag manifold of type $(k_1,...,k_r)$.
    \end{enumerate}
\end{defn}

Now we assume $\mathbb{K}=\mathbb{R}$ or $\mathbb{C}$. The flag manifold $\Flag(k_1,...,k_r, V)$ 
has dimension $$\frac{n(n-1)}2-\sum_{i=1}^{r+1}\frac{l_i(l_i-1)}2$$ 
where $l_1=k_1, l_i=k_i-k_{i-1}$ for $2\le i \le r$ and $l_{r+1}=n-k_r$.
When $r=1$ and $k_1=k$, $\Flag(k_1,..., k_r)$ is the real Grassmann manifold $Gr_k(\mathbb{R}^n)$. The dimension of $Gr_k(\mathbb{R}^n)$ is $k(n-k)$.

It is known that for any inner product on $V$, $\SO(V)$ acts transitively on $\Flag(k_1,\ldots,k_r,V)$, and the stabilizer in $O(V)$ of a flag $W_1\subset\cdots\subset W_r$ in $\Flag(k_1,\ldots,k_r,V)$ is $\RO(U_1)\times\cdots\times\RO(U_{r+1})$. Here $U_i$ is the orthogonal complement of $W_{i-1}$ in $W_i$, and we identify $W_0=0$, $W_{r+1}=V$. If we take stabilizer in $\SO(V)$, then we get $\RS(\RO(U_1)\times\cdots\times\RO(U_{r+1}))$. Here the outer $S$ means the total determinant is $1$. 

Therefore we have $\text{Flag}(k_1,...,k_r)\cong \text{SO}_n(\mathbb{R})/H$ where $H=\text{S}(\text{O}_{l_1}(\mathbb{R})\times \text{O}_{l_2}(\mathbb{R})\times...\times \text{O}_{l_{r+1}}(\mathbb{R}))$.
\subsection{Main theorem and preliminary discussions}

We are considering equivariant embedding of $\SO_n(\BR)/\RS(\RO_{n_1}(\BR)\times\cdots\RO_{n_r}(\BR))$, $n_1+\cdots+n_r=n$, $n_i\geqslant 1$, $n\geqslant 2$. This amounts to a finite dimensional real representation $V$ of $\SO_n(\BR)$ with a vector $v\in V$ whose stabilizer is $\RS(\RO_{n_1}(\BR)\times\cdots\RO_{n_r}(\BR))$.

The isospectral model $\{A\in\RM_n(\BR): A\text{ symmetric and } \tr\ A=0\}$ is an equivariant embedding whose dimension is $(n+2)(n-1)/2$. 
Our first main theorem is the following.

\begin{thm}\label{orth-flag-min}
    The equivariant embedding of $\SO_n(\BR)/\RS(\RO_{n_1}(\BR)\times\cdots\RO_{n_r}(\BR))$, $n_1+\cdots+n_r=n$, $n_i\geqslant 1$, $n\geqslant 2$ with minimal dimension is isospectral model.
\end{thm}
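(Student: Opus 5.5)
We reduce the statement to a problem about real representations of $\SO_n(\BR)$. An equivariant embedding of $M=\SO_n(\BR)/H$, with $H=\RS(\RO_{n_1}(\BR)\times\cdots\times\RO_{n_r}(\BR))$, is a real representation $V$ together with a vector $v\in V$ whose stabilizer is exactly $H$. The isospectral model gives such a pair with $\dim V=(n+2)(n-1)/2$. It therefore suffices to show that any real representation $V$ with $\dim V<(n+2)(n-1)/2$ contains no vector with stabilizer $H$. We may assume $r\geqslant 2$, since otherwise $H=\SO_n$ and $M$ is a point. The argument has two parts: first classify the small representations, then check stabilizers inside them.

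\textbf{Step 1: small representations.} Write $V=V^{\SO_n}\oplus\bigoplus_j V_j$ with each $V_j$ nontrivial irreducible. The trivial summand does not affect stabilizers, so we may drop it. Using Weyl's dimension formula together with the standard list of low-dimensional irreducible representations of $\mathfrak{so}_n$, we identify the nontrivial real irreducibles of dimension less than $(n+2)(n-1)/2$. For $n\geqslant 9$ (roughly) these are only the standard representation $\BR^n$, of dimension $n$, and $\Lambda^2\BR^n$, of dimension $n(n-1)/2$. Spin representations are excluded because they do not factor through $\SO_n$ on the relevant part, and for large $n$ their dimension is in any case too big. Any sum of at least two nontrivial irreducibles of these dimensions, other than $\BR^n\oplus\BR^n$ and a few similar small combinations, already reaches the bound. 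So for large $n$ the candidates for $V$, up to trivial summands, are $\BR^n$, $\BR^n\oplus\BR^n$, $\Lambda^2\BR^n$, and sums $k\BR^n$ with $kn<(n+2)(n-1)/2$, i.e.\ $k\leqslant (n+1)/2$ approximately. Sums of the form $k\BR^n\oplus\Lambda^2\BR^n$ are allowed only when small enough.

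\textbf{Step 2: stabilizers in the candidates.} In $k\BR^n\cong \RM_{n\times k}(\BR)$, the stabilizer of a tuple $(v_1,\dots,v_k)$ is the pointwise stabilizer of $W=\mathrm{span}(v_i)$. This is $\SO(W^\perp)$, which fixes $W$ pointwise and so never contains an element of $H$ acting by $-1$ on a block. When $r\geqslant2$, however, $H$ contains such elements, for example $\diag(-1,-1,1,\dots)$, or $-1$ on one block combined with $-1$ on another. The one exception to check is when $H$ itself has the form $\SO_{n-k}$, which would need $n_1=\cdots=n_{r-1}=1$. Even then $H$ contains sign changes on the one-dimensional blocks, which act nontrivially on $W$. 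In $\Lambda^2\BR^n\cong\mathfrak{so}_n$, stabilizers are centralizers of skew matrices. Each is of the form $\prod \RU(m_i)\times\SO(m_0)$, which is connected, whereas $H$ is disconnected when $r\geqslant2$ and some $n_i\geqslant1$ with $n\geqslant3$. The small case $n=2$ has to be handled separately by direct inspection. For mixed sums, the stabilizer of $(w,X)$ is the intersection of a stabilizer of the first kind with one of the second; this is contained in a connected-component-type subgroup and cannot equal $H$. In each case one must also confirm containment, namely that $H$ lies inside the stabilizer, and this is where the sign elements rule every case out.

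\textbf{Main obstacle.} The main difficulty is Step 1 for small $n$, roughly $n\leqslant 8$. There the spin representations ($n=7,8$, of dimension $8<27$ or $8<35$), the triality phenomena at $n=8$, the $n=4$ splitting $\mathfrak{so}_4=\mathfrak{su}_2\oplus\mathfrak{su}_2$ (which gives real representations of dimensions $3,4,\dots$), and $n=3,5,6$ all produce extra small candidates. Here $n=8$ is handled by noting that the spinor representations of $\mathrm{Spin}_8$ do not descend to $\SO_8$; for $n=7$ the representation is spin-only, hence likewise not a representation of $\SO_7$. For $n=4$ one works with representations of $\SO_4=(\SU_2\times\SU_2)/\pm1$, namely $V_{a,b}$ with $a+b$ even, and bounds the stabilizers directly, using the fact that the self-dual and anti-self-dual parts of $\Lambda^2$ are 3-dimensional with connected stabilizers. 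The first thing to try in each small case is to list the irreducibles of dimension below the bound via Weyl's formula and then apply the connectivity and sign-element argument from Step 2.
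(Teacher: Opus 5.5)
\textbf{First gap: the equality case.} You reduce to showing that no representation of dimension \emph{strictly} less than $(n+2)(n-1)/2$ has a vector with stabilizer $H$. That fixes the minimal dimension. The statement, however, says the minimal embedding \emph{is} the isospectral model; the paper concludes ``$V=W$ is isospectral model''. The genuine competitors have exactly that dimension, and you never examine them. They are:
the real irreducible $\Lambda^3\BR^6$ (dimension $20$, $n=6$);
the two $35$-dimensional representations $V(1,1,1,\pm1)$ of $\SO_8$, which are the triality images of the isospectral model;
$V(4,\pm4)$ (dimension $9$, $n=4$).

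The paper's device is Lemma~\ref{min-dim-fix}: in a minimal embedding every irreducible summand contains a nonzero vector fixed by the diagonal sign group $H_0\subseteq H$. Your sign-element remarks are already a version of this. For even $n$, $-1\in H_0$ forces $\sum\mu_i$ even, which kills $\Lambda^3\BR^6$. $H_0$ fixes none of the weight vectors $e_i\wedge e_j\wedge e_k\wedge e_l$, which kills $V(1,1,1,\pm1)$ (Lemma~\ref{even-exception}). The parity restriction is also what keeps the Weyl-formula classification manageable. Your Step~1 classification is only asserted, and it has to be carried out for dimension $\le$ the bound, not $<$.

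\textbf{Second gap: $n=4$.} First discard components on which $H_0$ has no fixed vector: $\BR^4$, $\Lambda^2_\pm$ and $V(2,\pm1)$. The remaining candidates below the bound are $V(2,\pm2)$ and $V(3,\pm3)$, of dimensions $5$ and $7$. On each of these, one of the two normal $\SU_2$ factors of $\SO_4$ acts trivially. Neither of your tools disposes of them. They do contain $H_0$-fixed vectors, so the sign argument fails. Their stabilizers are typically disconnected, so the connectivity argument fails too. For example, a generic vector of $V(2,2)\cong\Sym^2_0\BR^3$ has stabilizer the preimage of $K\times\SO_3$ under $\SO_4\to\SO_3\times\SO_3$, with $K$ a Klein four-group, and this has four components. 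Your remark that stabilizers in $\Lambda^2_\pm$ are connected only covers the $3$-dimensional pieces.

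The missing idea is that every stabilizer in such a representation contains a normal $\SU_2$ factor, while no $H$ with $r\ge2$ does. For $\RS(\RO_3\times\RO_1)$, that factor would have to equal $H^0=\SO_3$, which is not normal. The other choices of $H$ have dimension at most $2$. The paper argues via abelian Lie algebras, and for $H\cong\RO_3$ via spherical harmonics on $S^3$.

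Conversely, the cases $n=5,\dots,8$ that you flag as the main obstacle are harmless below the bound. For all $n\ge5$, the real irreducibles of dimension $<(n+2)(n-1)/2$ are only the trivial one, $\BR^n$ and $\Lambda^2\BR^n$. There your Step~2 is a valid alternative to Lemmas~\ref{natural representation-no-fix} and~\ref{adj-no-fix}. It uses pointwise stabilizers in $k\BR^n$, and connected centralizers $\prod\RU(m_i)\times\SO(m_0)$ set against $\pi_0(H)\cong(\BZ/2)^{r-1}\neq1$. Mixed sums never arise, since $\dim(\BR^n\oplus\Lambda^2\BR^n)=n(n+1)/2$ already exceeds the bound.
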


Especially, the minimal dimension for $\text{SO}_n(\mathbb{R})$-equivariant embedding of the real Grassmann manifold $\text{Gr}_k(\mathbb{R}^n)$ is $(n+2)(n-1)/2$ and is independent of $k$.

Lim-Ye \cite{LY} proved the case $n\ge 17$ of Theorem 1.7.

The definition of real flag manifold can be natuarally extended to the complex case, that is, replacing all the real vector spaces by the complex one. In this section, we always assume that $V$ is a complex vector space and denote the corresponding complex flag manifold of type $(k_1,\dots,k_r)$ by $\CFlag(k_1,\dots,k_r,V)$

It is known that for any Hermitian inner product on $V$, $\SU(V)$ acts transitively on $\CFlag(k_1,\ldots,k_r,V)$, and the stabilizer in $U(V)$ of a flag $W_1\subset\cdots\subset W_r$ in $\CFlag(k_1,\ldots,k_r,V)$ is $\RU(U_1)\times\cdots\times\RU(U_{r+1})$. Here $U_i$ is the orthogonal complement of $W_{i-1}$ in $W_i$, and we identify $W_0=0$, $W_{r+1}=V$. If we take stabilizer in $\SU(V)$, then we get $\RS(\RU(U_1)\times\cdots\times\RU(U_{r+1}))$. Here the outer $S$ means the total determinant is $1$.

Our second main theorem is:
\begin{thm}\label{uni-min-equi}
    The equivariant embedding of $\SU_n/\RS(\RU_{n_1}\times\cdots\times\RU_{n_r})$, $n_1+\cdots+n_r=n$, $n_i\geqslant 1$, $n\geqslant 2$ with minimal dimension is isospectral model.
\end{thm}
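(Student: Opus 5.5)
The plan is to prove a lower bound. Let $V$ be a real representation of $\SU_n$ containing a vector $v$ whose stabilizer is $H=\RS(\RU_{n_1}\times\cdots\times\RU_{n_r})$. I will show $\dim_{\BR}V\geqslant n^2-1$. Since $\text{Her}_n^{tr=0}(\BR)$ has dimension $n^2-1$ and realizes the orbit (as recalled above), this gives the theorem. I assume $r\geqslant 2$, so that $H\neq \SU_n$. The case $r=1$ is the point $\SU_n/\SU_n$ and is excluded implicitly by the statement.

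\emph{Reduction to one irreducible summand.} Decompose $V=\bigoplus_i V_i$ into real irreducible representations and write $v=\sum_i v_i$. Then $\Stab(v)=\bigcap_i \Stab(v_i)$. Since $H\neq\SU_n$, some component $v_i$ is not fixed by all of $\SU_n$, so $V_i$ is nontrivial. Moreover $H$ contains the diagonal maximal torus $T$ of $\SU_n$, so $v_i$ is a nonzero $T$-fixed vector in $V_i$. It therefore suffices to show: \emph{every nontrivial real irreducible representation of $\SU_n$ with a nonzero $T$-fixed vector has real dimension at least $n^2-1$.}

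\emph{Proving the dimension bound.} The complexification $V_i\otimes\BC$ is either a complex irreducible $W_\lambda$ (real type) or $W_\lambda\oplus\overline{W_\lambda}$. In both cases $\dim_{\BR}V_i\geqslant \dim_{\BC}W_\lambda$, and $W_\lambda$ has a nonzero zero-weight space. Hence $\lambda\neq 0$ is dominant and lies in the root lattice. In type $A_{n-1}$ the highest root $\theta=e_1-e_n$ is the unique minimal nonzero dominant element of the root lattice, so $\theta\preceq\lambda$ in the dominance order. By the standard saturation property of weights, every dominant $\mu\preceq\lambda$ is a weight of $W_\lambda$. In particular $\theta$ and $0$ are weights, and so are all their Weyl conjugates. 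This already gives the $n(n-1)$ roots plus the zero weight.

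If $\lambda=\theta$, then $W_\lambda$ is the adjoint representation, of dimension $n^2-1$. If $\lambda\neq\theta$, the Weyl orbit of $\lambda$ is disjoint from the roots. A nonzero dominant weight has Weyl orbit of size $n!/\prod m_j!$ with at least two distinct coordinate values, hence of size at least $n$. So
$$\dim W_\lambda\geqslant n(n-1)+1+n>n^2-1.$$

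\emph{Expected obstacle.} The main care lies in this last step: the orbit-size estimate, and the fact that dominant weights below $\lambda$ occur in $W_\lambda$. Both are standard (see \cite{FH91}), but they should be checked in small rank, especially $n=2,3$. Alternatively, the case $\lambda\neq\theta$ can be handled directly with the Weyl dimension formula, using monotonicity of $\dim W_\lambda$ in $\lambda$ along dominant directions. With the bound established, the isospectral model attains $n^2-1$ and is therefore minimal.
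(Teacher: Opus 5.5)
Your argument is correct, and it takes a genuinely different route from the paper.

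\textbf{How the paper argues.} It first reduces, as in Lemma \ref{min-dim-fix}, to a minimal-dimensional $V$ whose every irreducible summand has a nonzero vector fixed by the diagonal torus. It then makes the same root-lattice observation you make (Lemma \ref{n-div}: $n\mid\sum\mu_i$). After that it proves Proposition \ref{H-fix-min} by brute force: it sorts dominant weights by their number of ``stairs'' and bounds each case with the Weyl dimension formula (Lemmas \ref{uni-ineq1}--\ref{uni-ineq4}).

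\textbf{How you argue.} You bound an arbitrary embedding directly, via $\dim V\geqslant\dim V_i$ for one nontrivial summand carrying a $T$-fixed component of $v$, with no minimality reduction. You then replace all the dimension estimates by weight counting:
\begin{itemize}
\item $\lambda\succeq\theta$, since $\theta$ is the unique minimal nonzero dominant element of the root lattice;
\item saturation puts all $n(n-1)$ roots and $0$ among the weights;
\item for $\lambda\neq\theta$ the Weyl orbit of $\lambda$ is disjoint from these and contributes at least $n$ more weights, so $\dim W_\lambda\geqslant n^2+1$.
\end{itemize}

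\textbf{What each route buys.} Yours is shorter, avoids case analysis, and explains conceptually why the adjoint representation is the winner. It depends essentially on the stabilizer containing a maximal torus, so that ``$H$-fixed'' becomes ``zero weight''. That is exactly what fails in the paper's real case, where $H$ is the finite group of diagonal sign matrices. There only the dimension-formula method is available, so the paper's approach buys uniformity between its two theorems, while yours buys clarity for the unitary case.

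\textbf{Two small remarks.}
\begin{itemize}
\item You are right to assume $r\geqslant 2$; for $r=1$ the quotient is a point and the statement is vacuous or false.
\item The paper proves the stronger reading that every minimal embedding \emph{is} the isospectral model. To get this, add the equality case of your argument. If $\dim V=n^2-1$, then $V=V_i$ and $\lambda=\theta$. Since $W_\theta=\mathfrak{sl}_n(\BC)$ is of real type, $V_i\otimes\BC=W_\theta$, so $V\cong\mathfrak{su}_n\cong\mathrm{Her}_n^{tr=0}(\BR)$.
\end{itemize}
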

To sum up, we proved that
\begin{thm}
The minimal dimension equivariant embedding of the flag manifold $\Flag(k_1,...,k_r,\mathbb{R}^n)$ with $\SO_n(\mathbb{R})$-action (resp. $\CFlag(k_1,...,k_r,\mathbb{C}^n))$ with $\SU_n$-action) is the given by the equivariant embedding to the traceless isospectral model $\Sym_n^{tr=0}(\mathbb{R})$ which has dimension $(n-2)(n+1)/2$ (resp. ${\rm Her}_n^{tr=0}(\mathbb{R})$ which has dimension $n^2-1$).
\end{thm}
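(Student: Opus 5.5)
The final theorem is just Theorems \ref{orth-flag-min} and \ref{uni-min-equi} put together. One remark on the count: $\dim\Sym_n^{tr=0}(\BR)=n(n+1)/2-1=(n+2)(n-1)/2$, which matches the preliminary discussion. So the plan is to prove the two lower bounds. In each case the isospectral model already gives an embedding of the stated dimension, so only the inequality $\dim V\geqslant \dim(\text{model})$ needs proof.

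\emph{Reduction to irreducibles.} Let $V$ be a real representation of $G=\SO_n(\BR)$ (resp.\ $\SU_n$) and $v\in V$ a vector with stabilizer exactly $H=\RS(\RO_{n_1}\times\cdots\times\RO_{n_r})$ (resp.\ $\RS(\RU_{n_1}\times\cdots\times\RU_{n_r})$). Decompose $V=\bigoplus V_j$ into irreducibles and write $v=\sum v_j$. Then $\Stab(v)=\bigcap_j\Stab(v_j)$, and every $v_j$ is fixed by $H$. Since $r\geqslant 2$, $H\neq G$, so some $v_j$ must be nonzero in a nontrivial irreducible $V_j$. It therefore suffices to show the following: every nontrivial irreducible real representation of $G$ that has a nonzero vector fixed by a suitable small subgroup $D\subset H$ has dimension at least that of the traceless isospectral model. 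We choose $D$ to lie in $H$ for every flag type.

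\emph{Unitary case.} Take $D=Z(\SU_n)\cong\mu_n$, which lies in $H$ because scalars lie in every $\RS(\RU_{n_1}\times\cdots)$. An irreducible $V_j$ with a nonzero $D$-fixed vector has trivial central character, so it is a nontrivial representation of $\mathrm{PSU}_n$. Its complexification has highest weight in the root lattice. We then use the Weyl dimension formula to check that the smallest nonzero dominant weight in the root lattice is the highest root, giving the adjoint representation of real dimension $n^2-1$. This is a standard minimality statement: among nonzero dominant root-lattice weights, the highest root $\varepsilon_1-\varepsilon_n$ is minimal in the dominance order, and dimension is monotone in that order. Hence $\dim V\geqslant n^2-1$.

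\emph{Orthogonal case.} Take $D=\{\diag(\pm1,\dots,\pm1)\}\cap\SO_n(\BR)$, which lies in $H$ since each $\RO_{n_i}$ contains its diagonal sign matrices. The plan is as follows.
\begin{enumerate}
\item List all irreducible real representations of $\SO_n(\BR)$ of dimension $<(n+2)(n-1)/2$ via the Weyl dimension formula. For $n\geqslant 5$ these are the trivial, standard $\BR^n$, $\Lambda^2\BR^n$, and, for $n\leqslant 16$ roughly, the (half-)spin representations when they descend to $\SO_n$. They in fact do not descend, being representations of $\mathrm{Spin}_n$ only, but this must be checked together with the real forms.
\item For small $n$ ($n=2,3,4$, where $\SO_4$ is not simple and $\Lambda^2=\Lambda^+\oplus\Lambda^-$), list the small representations by hand.
\item Verify that $D$ has no nonzero fixed vector in any of them. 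On $\BR^n$, $D$ contains sign changes negating $e_i$ for $n\geqslant 2$, or for $n=2$ the element $-I$. On $\Lambda^2$ and $\Lambda^\pm$, each $e_i\wedge e_j$ is negated by some element of $D$ flipping $e_i$ and another coordinate. Genuinely projective representations are excluded automatically.
\end{enumerate}
For $n=2$ the flag manifold is $\SO_2/\{\pm I\}$, and the argument reduces to the fact that the characters of $\SO_2$ trivial on $-I$ are the even ones, whose smallest nontrivial real form has dimension $2=(n+2)(n-1)/2$.

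\emph{Main obstacle.} The hardest step is step (1) of the orthogonal case. It needs a complete and correct classification of the low-dimensional irreducible real representations of $\SO_n(\BR)$, uniformly in $n$ and including the exceptional small $n$: $n=3,4$, and the range where the (half-)spin representations are small. I would organize it by dominance order on highest weights, bounding $\dim$ from below for any weight beyond $\varepsilon_1,\varepsilon_1+\varepsilon_2$ and the spin weights, and treat $n\leqslant 8$ by direct tables. I expect this to be where \cite{LY} needed $n\geqslant 17$; using the sign group $D$ rather than the full $H$ should make the small-$n$ cases manageable.
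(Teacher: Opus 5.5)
\textbf{Gap 1: the orthogonal case at $n=4$.} Your reduction to a single irreducible component is sound. For $n\neq 4$ your orthogonal plan is the paper's route through Proposition~\ref{orth-flag2}, with your $D$ playing the role of the paper's $H$. But the sufficient condition you set out to prove is false for $n=4$, so step~(3) fails there.

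The real irreducibles $V(2,\pm2)\cong\Sym^2_0(\Lambda^\pm\BR^4)$ and $V(3,\pm3)$ (harmonic cubics on $\Lambda^\pm\BR^4$) have dimensions $5$ and $7<9$. They are pulled back along $\SO_4(\BR)\to\SO(\Lambda^\pm\BR^4)\cong\SO_3(\BR)$. The sign group $D$ acts on $\Lambda^\pm\BR^4$ through the diagonal Klein four-group of $\SO_3(\BR)$, which fixes the traceless diagonal matrices and the cubic $x_1x_2x_3$. Your list of small $\SO_4$-representations ($\BR^4$, $\Lambda^\pm$) misses these; the paper points this out in the Remark after Proposition~\ref{orth-flag2}. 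No choice of $D$ can save a one-component argument, because for the full flag of $\BR^4$ the stabilizer \emph{is} the sign group.

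The missing idea is to use all of $v$ at once. Suppose $\dim V\leqslant 8$. Every nontrivial summand in which $v$ has a nonzero component then has a $D$-fixed vector. By the classification it is one of $V(2,\pm2)$ or $V(3,\pm3)$, and only one of them fits in dimension $8$. One of the two normal $\SU_2$-factors of $\SO_4(\BR)$ acts trivially on that summand, so it fixes $v$ and hence acts trivially on the flag manifold. That is impossible, since $\SO_4(\BR)$ acts on every flag manifold of $\BR^4$ with kernel $\{\pm I\}$; compare Corollary~\ref{SO4-thm}. So your closing remark has it backwards: the sign group is exactly what is too small at $n=4$.

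\textbf{Gap 2: the unitary case.} Taking $D=Z(\SU_n)$ is a clean form of the paper's Lemma~\ref{n-div}, since only the condition $n\mid\sum\mu_i$ is used afterwards. But your justification that the adjoint is minimal rests on a false principle: dimension is not monotone in the dominance order. For $\SU_4$, the root-lattice weights $(4,0,0)\succ(3,1,0)$ differ by $e_1-e_2$, yet $\dim V(4,0,0)=35<45=\dim V(3,1,0)$. What does hold is $\dim V(\mu+\lambda)>\dim V(\mu)$ for dominant $\lambda\neq0$; the paper combines this with the case analysis of Lemmas~\ref{uni-ineq1}--\ref{uni-ineq4}.

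Your observation $\theta\preceq\lambda$ can instead be used to count weights. Let $\lambda\neq0,\theta$ be dominant in the root lattice. The dominant weights $\theta$ and $0$ lie below $\lambda$, so they are weights of $V(\lambda)$. Hence $V(\lambda)$ contains all $n(n-1)$ roots, the weight $0$, and the orbit $W\lambda$. This orbit has at least $n$ elements and avoids the roots, because $\theta$ is the only dominant root. This gives $\dim V(\lambda)\geqslant n^2+1$.

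\textbf{Remaining points.} Your step~(1) is only announced, although it is most of the paper's work (Propositions~\ref{type1-condition}, \ref{type2-condition} and the parity lemma). Your argument also yields only the lower bound on dimension. To show that a minimal embedding is the isospectral model, you must still exclude irreducibles of exactly the model dimension, such as $\Lambda^3\BR^6$ and $\Lambda^4_\pm\BR^8$. This is done using $-I\in D$ and Lemma~\ref{even-exception}.
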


\subsection{Method to prove main theorems}

(1) The key to prove Theorem \ref{orth-flag-min} is Proposition \ref{orth-flag2}, which states that the diagonal subgroup $H=\text{S}(\text{O}_1(\mathbb{R})\times...\times \text{O}_1(\mathbb{R}))$ acts without nontrivial fixed points on any representation space $V$ of the special orthogonal group $\SO_n(\mathbb{R})$ with $\dim V\le \dim\Sym_n^{tr=0}(\mathbb{R})$ and $V\ne\Sym_n^{tr=0}(\mathbb{R})$.
Here the trivial fixed point in a representation means the origin of that representation space.

(2) The key to prove Theorem \ref{uni-min-equi} is Proposition \ref{H-fix-min}, which states that the diagonal subgroup $H=\RS(\RU_1\times...\times \RU_1)$ acts without nontrivial fixed points on the representation spaces $V$ of the special orthogonal group $\SU_n$ with $\dim V\le\dim \mathrm{Her}_n^{tr=0}(\BR)$ and $V\ne\mathrm{Her}_n^{tr=0}(\BR)$.

Both Proposition \ref{orth-flag2} and \ref{H-fix-min} concerns nontrivial fixed points of a subgroup in a representation space.

{\bf Acknowledgement.} We thank Yu Wang for helpful discussion of the problem and the proof. During the preparation of the paper, the authors found that Lim-Wang-Ye \cite{WLY} obtained the same result independently.

\section{Equivariant embedding of real flag manifolds}

The goal of this section is to prove Theorem \ref{orth-flag-min}

\noindent\textbf{Theorem \ref{orth-flag-min}.}{\it \,
    The equivariant embedding of $\SO_n(\BR)/\RS(\RO_{n_1}(\BR)\times\cdots\RO_{n_r}(\BR))$, $n_1+\cdots+n_r=n$, $n_i\geqslant 1$, $n\geqslant 2$ with minimal dimension is isospectral model.}

\begin{lem}\label{orth-flag1}
    Theorem \ref{orth-flag-min} is true when $n=2$.
\end{lem}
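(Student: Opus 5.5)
For $n=2$ the possible types are $(n_1,n_2)=(1,1)$, giving $H=\RS(\RO_1\times\RO_1)=\{\pm I_2\}$, and the trivial type $r=1$, $n_1=2$. I will read the statement as being about $r\geqslant 2$, so the relevant space is $\SO_2(\BR)/\{\pm I\}\cong S^1$. (In the degenerate case $r=1$ the quotient is a point; I will treat it as excluded or trivial.) The isospectral model $\Sym_2^{tr=0}(\BR)$ has dimension $(2+2)(2-1)/2=2$. So the claim is that no equivariant embedding into $\BR^1$ exists, and that any $2$-dimensional one is the isospectral model.

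The plan is to use the representation theory of $\SO_2(\BR)\cong S^1$. Every real representation $V$ decomposes as a sum of trivial summands $\BR$ and rotation planes $\BR^2_{(m)}$, $m\geqslant 1$, on which $\theta$ acts by rotation through $m\theta$. An equivariant embedding of $G/H$ corresponds to a vector $v\in V$ with $\Stab(v)=H=\{\pm I\}$, i.e. $\theta\in\{0,\pi\}$.

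First, $\dim V=1$ is impossible. The only $1$-dimensional real representation of the connected group $S^1$ is trivial, because $\RO(1)=\{\pm1\}$ is discrete. So every vector has stabilizer all of $\SO_2$, which is not $H$. The same reasoning rules out $\dim V=0$. Alternatively, a circle does not embed in $\BR^1$ at all.

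Second, in dimension $2$, $V$ is either trivial (excluded as above) or a single $\BR^2_{(m)}$. A nonzero vector in $\BR^2_{(m)}$ has stabilizer $\{\theta: m\theta\in 2\pi\BZ\}\cong \BZ/m$. This equals $\{0,\pi\}$ exactly when $m=2$. Next I identify $\Sym_2^{tr=0}(\BR)$: writing a traceless symmetric matrix as $\begin{pmatrix}a&b\\ b&-a\end{pmatrix}$, conjugation by rotation through $\theta$ rotates $(a,b)$ through $2\theta$. Hence $\Sym_2^{tr=0}(\BR)\cong\BR^2_{(2)}$, and the unique minimal-dimensional model is the isospectral one, with the orbit of $\diag(a_1,a_2)$ giving the embedding.

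There is no real obstacle here. The only point needing care is the convention on $r$ (whether $n_1=2$ is allowed) and checking the weight-$2$ computation for the conjugation action.
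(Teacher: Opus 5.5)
Your proposal is correct and follows essentially the same route as the paper. Both decompose representations of $\SO_2(\BR)\cong S^1$, note that a minimal model must be a single $2$-dimensional irreducible piece of weight $m$, and force $m=\pm 2$, which is the isospectral model. The paper uses complex characters $z\mapsto z^m$ and complex conjugation to identify $m=-2$ with $m=2$. You instead use real rotation planes with $m\geqslant 1$, and you explicitly compute the stabilizer $\BZ/m$ and the weight-$2$ action on $\Sym_2^{tr=0}(\BR)$, which fills in steps the paper leaves implicit. Your restriction to the type $(1,1)$ matches what the paper's proof actually treats.
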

\begin{proof}
    We are considering the case $\SO_2(\BR)/\RS(\RO_1(\BR)\times\RO_1(\BR))\cong S^1/\pm 1$, where $S^1$ is identified with norm $1$ element in $\BC$. The isospectral model corresponds to the representation $S^1\to\GL_1(\BC)$ given by $z\mapsto z^2$.

    We know $S^1$ representation $V$ is direct sum of complex one-dimensional representations. To reach the minimal dimension, $V$ should be complex one-dimensional, hence is given by $S^1\to\GL_1(\BC)$, $z\mapsto z^m$, $m\in\BZ$. The correct one must be $m=\pm 2$. Note that the complex conjugation on $\BC$ transforms the representation $z\mapsto z^{-2}$ to $z\mapsto z^2$, hence $V$ is isomorphic to isospectral model.
\end{proof}

When $n\geqslant 3$, $\SO_n(\BR)$ is a semi-simple Lie group. It is simple when $n\neq 4$.

Denote $H\subseteq\SO_n(\BR)$ the subgroup of diagonal matrices, which should have diagonal entries $\pm 1$. It is clear that for any partition $(n_1,\dots,n_r)$ of $n$, $H \subseteq \RS(\RO_{n_1}(\BR)\times\cdots\RO_{n_r}(\BR))$.

\begin{lem}\label{min-dim-fix}
    Consider the set $\Omega$ of all real representations of $\SO_n(\BR)$ with stabilizer $\SO(\RO_{n_1}(\BR)\times\cdots\times\RO_{n_r}(\BR))$. If $V\in\Omega$ satisfies $\dim V=\min\{\dim W\mid W\in\Omega\}$, then $V$ does not contain trivial sub-representations and each irreducible sub-representation contains a non-zero vector fixed by $H$.
\end{lem}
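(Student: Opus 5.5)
Write $K=\RS(\RO_{n_1}(\BR)\times\cdots\times\RO_{n_r}(\BR))$ and let $v\in V$ be a vector with $\Stab(v)=K$ (the statement should be read with $\RS$ in place of $\SO$). The plan is to show that a minimal $V$ cannot contain a summand that is superfluous for cutting the stabilizer down to $K$. Removing such a summand would give a smaller element of $\Omega$, which is a contradiction.

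\textbf{Step 1: setting up the projection argument.} Since $\SO_n(\BR)$ is compact, $V$ is completely reducible. Write $V=V_1\oplus\cdots\oplus V_m$ with each $V_i$ irreducible, and write $v=v_1+\cdots+v_m$ with $v_i\in V_i$. The decomposition is $G$-invariant, so
$$\Stab(v)=\bigcap_i \Stab(v_i).$$
Hence if some $v_j$ has $K\subseteq\Stab(v_j)$, we may delete $V_j$. The remaining vector $v-v_j\in\bigoplus_{i\ne j}V_i$ then has stabilizer
$$\bigcap_{i\neq j}\Stab(v_i).$$
This group contains $K$. It also equals $K$, because adding back $\Stab(v_j)\supseteq K$ does not change the intersection: $K\subseteq \bigcap_{i\ne j}\Stab(v_i)$ and $\bigcap_{i\ne j}\Stab(v_i)\cap\Stab(v_j)=K$. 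One has to be careful here, since this last equation alone does not force $\bigcap_{i\ne j}\Stab(v_i)=K$. The cleaner route is to note that $v_j$ fixed by $K$ is only useful when it cuts something down.

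\textbf{Step 2: trivial summands and $H$-fixed vectors.} Trivial summands are handled directly. If $V_j$ is trivial, then $\Stab(v_j)=G$, so
$$\bigcap_{i\ne j}\Stab(v_i)=\Stab(v)=K,$$
and $V_j$ can be dropped, contradicting minimality. The same reasoning applies if $v_j=0$, since then $\Stab(v_j)=G$ as well. So in a minimal $V$ every $v_j$ is nonzero. Each $v_j$ is fixed by $K\supseteq H$, because $g v=v$ for $g\in K$ and projection to $V_j$ is equivariant. Therefore every irreducible summand $V_j$ contains the nonzero $H$-fixed vector $v_j$.

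\textbf{Step 3: the intrinsic version.} To phrase the conclusion about arbitrary irreducible sub-representations rather than a chosen decomposition, first group $V$ into isotypic components. Any irreducible subrepresentation $W$ is isomorphic to some $V_j$, and $V_j$ has a nonzero $H$-fixed vector, so $W$ does as well. Likewise, any trivial subrepresentation would lie in the trivial isotypic component, and Step 2 shows that this component is zero.

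The main (mild) obstacle is the bookkeeping in Step 2 when the decomposition is not canonical, that is, when there are isotypic multiplicities. This is handled by fixing one decomposition and using isomorphism invariance, as in Step 3.
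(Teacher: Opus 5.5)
Your argument in Steps 2--3 is correct and is essentially the paper's. Both project the vector $v$ with stabilizer $K$ onto an irreducible piece and note that the projection is $K$-fixed, hence $H$-fixed. If that projection is $G$-fixed (zero, or the piece is trivial), you discard the piece and obtain a smaller member of $\Omega$. The paper applies this directly to an arbitrary irreducible $W$ with a complement $W'$, which makes your Step~3 transfer via isomorphism unnecessary.

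You should delete the opening claim of Step~1 that any $V_j$ with $K\subseteq\Stab(v_j)$ can be dropped. It is false: every $v_j$ is $K$-fixed, so the claim would let you drop everything. It is also never used; the criterion you actually need and use is $\Stab(v_j)=G$.
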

\begin{proof}
    Suppose $V$ contains a trivial sub-representation, say $\BR\subseteq V$. Then there is a complement sub-representation $W$ such that $V=\BR\oplus W$. Then $W\in\Omega$ with smaller dimension, a contradiction.
    
    Let $W$ be an irreducible sub-representation of $V$ and suppose $H$ does not fix non-zero vector in $W$. Let $W'$ be a complement such that $V=W\oplus W'$. Suppose $v\in V$ has stabilizer $\SO(\RO_{n_1}(\BR)\times\cdots\times\RO_{n_r}(\BR))$. Write $v=w+w'$ with $w\in W$ and $w'\in W'$. Then $H$ fixes $v$ implies $H$ fixes $w$, hence $w=0$. Thus $v=w'$ and $W'\in\Omega$ with smaller dimension, a contradiction.
\end{proof}

The above Lemma emphasizes the importance of real irreducible representations with non-zero $H$-fixed vector. We shall prove the following.

\begin{prop}\label{orth-flag2}
    Assume $n\geqslant 3$ and $n\neq 4$. Let $V$ be an irreducible real representation of $\SO_n(\BR)$. If $\dim V$ is no greater than isospectral model and $H$ fixes some non-zero vector in $V$, then $V$ is either trivial representation or isospectral model.
\end{prop}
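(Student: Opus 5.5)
I would work with complexifications and highest weights. Let $V$ be an irreducible real representation of $\SO_n(\BR)$. Its complexification $V_\BC$ is either irreducible of real type (so $\dim_\BC V_\BC=\dim_\BR V$), or $V_\BC=U\oplus\bar U$ with $U$ irreducible (complex or quaternionic type), so $\dim_\BC U=\dim V/2$. An $H$-fixed nonzero vector in $V$ gives an $H$-fixed vector in $V_\BC$, and hence in some complex irreducible constituent $U$. So it suffices to prove the following: every complex irreducible representation $U$ of $\SO_n(\BC)$ with $\dim U\le (n+2)(n-1)/2$ and $U^H\neq 0$ is trivial or is $\Sym^2_0\BC^n$. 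Note that $\Sym^2_0\BC^n$ is of real type, so it recovers the isospectral model. The case $U\oplus\bar U$ is excluded, because $\dim U\le \dim V/2$ would be even smaller and $U$ would have to be trivial or $\Sym^2_0$, both of real type.

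\textbf{Step 1: classify the small representations.} For $n\ge 3$, $n\ne 4$, I would use the Weyl dimension formula to list all irreducible $\mathfrak{so}_n$-representations of dimension at most $(n+2)(n-1)/2$. Generically these are the trivial one, the standard $\BC^n$, $\Lambda^2\BC^n$ (dimension $n(n-1)/2$), and $\Sym^2_0\BC^n$. The spin representations have dimension $2^{\lfloor (n-1)/2\rfloor}$, which is small only for small $n$; they do not factor through $\SO_n$ anyway, so they are excluded. The small $n$ need separate care, where extra coincidences occur:
\begin{itemize}
\item $n=3$: $\SO_3$ has the representations of dimension $1,3,5$ with bound $5$.
\item $n=5,6$: $\mathfrak{so}_5\cong\mathfrak{sp}_4$ and $\mathfrak{so}_6\cong\mathfrak{sl}_4$, so one should check which representations descend to $\SO_n$. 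For example, for $\mathfrak{sl}_4$ the $10$-dimensional $\Sym^2\BC^4$ does not descend, and the $15$-dimensional adjoint equals $\Lambda^2\BC^6$.
\item $n=7,8$: the spin representations ($8$-dimensional, and triality for $n=8$, where the bound is $35$) do not factor through $\SO_n$.
\end{itemize}
The standard technique is to show that the dimension is strictly increasing when the highest weight increases in the dominant order, then check the finitely many minimal weights beyond these. I expect this classification, uniform in $n$ with the small-$n$ exceptions, to be the main obstacle. I would try first to bound via Weyl's formula that any highest weight $\lambda$ other than $0,\omega_1,\omega_2,2\omega_1$ (and, for small $n$, $\omega_3$ or the spin-type weights) has dimension exceeding the bound, by comparing with $\dim$ of $\omega_3$, $3\omega_1$ and $\omega_1+\omega_2$, which already exceed $(n+2)(n-1)/2$ for $n\ge 7$ or so.

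\textbf{Step 2: check $H$-fixed vectors in the survivors.} $H$ is the group of diagonal sign matrices of determinant $1$.
\begin{itemize}
\item On $\BC^n$: $e_i$ is negated by some element of $H$ (flip signs of $e_i$ and some $e_j$, using $n\ge 2$), so there are no fixed vectors.
\item On $\Lambda^2\BC^n$: the basis $e_i\wedge e_j$ consists of $H$-eigenvectors with distinct characters. For $n\ge 3$, choosing $k\ne i,j$ and flipping $i,k$ shows that $e_i\wedge e_j$ is not fixed. This also handles the remaining small-$n$ exceptions like $\Lambda^3$, using $n\ge 4$ where needed, or treating $n=6$ and $\Lambda^3$ by dimension.
\item On $\Sym^2_0\BC^n$: the traceless diagonal matrices are fixed.
\end{itemize}
This yields the proposition.
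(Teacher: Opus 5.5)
Your reduction to a complex irreducible constituent with an $H$-fixed vector, and your Step 2 tests on explicit bases, are the same as in the paper. The gap is Step 1. It carries all the content, you only sketch it, and as sketched it gives the wrong list of survivors exactly in the exceptional ranks.

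For $n=8$, the representations $V(1,1,1,\pm1)=\Lambda^4_\pm\BC^8$ have dimension $35$, which is exactly the bound. Their highest weights are $2\omega_3,2\omega_4$, the triality images of $V(2,0,0,0)=\Sym^2_0\BC^8$. These weights are integral, so the representations do factor through $\SO_8$. They are also of real type, so $\Lambda^4_\pm\BR^8$ are real irreducibles of the maximal allowed dimension. Your list omits them. At $n=8$ you only dismiss the $8$-dimensional half-spin representations, which indeed do not descend, but their doubles $2\omega_3,2\omega_4$ do. The paper has to isolate precisely this case: it is the equality case $\dim V(1,1,1,1)=\dim V(2,0,0,0)$ of Lemma \ref{first type}, it survives Proposition \ref{type1-condition}, and it is killed in Lemma \ref{even-exception}.

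Your $n=6$ remark is also wrong. $-I_4$ acts trivially on $\Sym^2\BC^4$, so it does descend to $\SO_6(\BC)\cong\SL_4(\BC)/\{\pm1\}$. It is $V(1,1,1)=\Lambda^3_+\BC^6$, of dimension $10$, with conjugate $V(1,1,-1)$. The real form $\Lambda^3\BR^6$ has dimension $20$, equal to the bound, so it cannot be discarded ``by dimension'' as you suggest. The paper removes it with the parity lemma: $-I\in H$ forces $\sum\mu_i$ to be even.

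Both omissions are repaired by your own Step 2 device. For $k<n$, each $e_{i_1}\wedge\cdots\wedge e_{i_k}$ is negated by the element of $H$ that changes signs at $i_1$ and at one index outside $\{i_1,\ldots,i_k\}$. Hence $\Lambda^k\BC^n$, and so $\Lambda^3_\pm\BC^6$ and $\Lambda^4_\pm\BC^8$, have no $H$-fixed vectors; this is the paper's proof of Lemma \ref{even-exception}.

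You still need a correct classification first, and the tool you name for it is not valid. Dimension is not monotone in the dominance order. For example, for $\SO_5$ one has $(3,0)=(2,1)+(e_1-e_2)$, yet $\dim V(3,0)=30<35=\dim V(2,1)$. What does hold, and what the paper uses, is $\dim V(\mu+\lambda)>\dim V(\mu)$ for nonzero dominant $\lambda$. For that order each weight $(1,\ldots,1,0,\ldots,0)$ is minimal, and in type $D$ so is $(1,\ldots,1,-1)$. So you must control every exterior power, not just $\omega_3$. The paper does this with ratio computations along the families $(1,\ldots,1,0,\ldots,0)$ and $(2,1,\ldots,1,0,\ldots,0)$ in Lemmas \ref{first type}, \ref{second type}, \ref{first type1} and \ref{second type1}. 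That is exactly where the $n=8$ equality case surfaces.
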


\begin{remark}
    The above Proposition is wrong for $\SO_4(\BR)$. For example, the highest weight representation $V(2,2)$ has smaller dimension but $H$ fixes non-zero vector.
\end{remark}

Let us prove main theorem when $n\neq 4$ assuming Proposition \ref{orth-flag2}.

\begin{proof}[Proof of Theorem \ref{orth-flag-min} for $n\neq 4$ assuming Proposition \ref{orth-flag2}]
    By Lemma \ref{orth-flag1}, we may assume $n\geqslant 3$. Assume $V$ is an equivariant embedding of smallest dimension. By Lemma \ref{min-dim-fix}, $V$ does not contain trivial sub-representations and each irreducible sub-representation, say $W$, has non-zero $H$-fixed vector. Now $\dim W$ is no greater than isospectral model, hence Proposition \ref{orth-flag2} implies $W$ should be isospectral model. Considering dimension, we have $V=W$ is isospectral model.
\end{proof}

The remaining case $\SO_4(\BR)$ is proved by hand, see Corollary \ref{SO4-thm}.

The basic ideal is to compute dimension of $V$. However, we can rule out some cases at the begining.

\begin{lem}\label{natural representation-no-fix}
    Let $V=\BR^n$ be the standard representation of $\SO_n(\BR)$. Then $H$ does not fix non-zero vector in $V$.
\end{lem}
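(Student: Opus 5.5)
The plan is a direct computation: show that an $H$-fixed vector $v=(v_1,\dots,v_n)^t\in\BR^n$ must have every coordinate equal to zero. Recall that $H$ consists of the diagonal matrices $\diag(\epsilon_1,\dots,\epsilon_n)$ with $\epsilon_i\in\{\pm1\}$ and $\prod_i\epsilon_i=1$.

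Since $n\geqslant 2$, for each index $i$ we choose some $j\neq i$. Let $h_{ij}\in H$ be the diagonal matrix with entries $-1$ in positions $i$ and $j$ and $+1$ elsewhere. Its determinant is $(-1)^2=1$, so $h_{ij}$ lies in $H$.

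The condition $h_{ij}v=v$ gives $-v_i=v_i$ and $-v_j=v_j$, hence $v_i=0$. Since $i$ was arbitrary, $v=0$.

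There is essentially no obstacle here. The only point to check is that two sign flips are needed to keep the determinant equal to $1$, and this is possible exactly because $n\geqslant 2$. The case $n=1$ never occurs in the paper's setting, so the argument covers all cases needed.
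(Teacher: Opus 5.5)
Your proof is correct and is the computation the paper has in mind when it calls the lemma ``trivial'': the sign change $\diag(\ldots,-1,\ldots,-1,\ldots)$ in positions $i$ and $j$ lies in $H$ for $n\geqslant 2$ and forces $v_i=0$. You have simply written out what the paper leaves implicit; nothing is missing.
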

\begin{proof}
    This is trivial.
\end{proof}

\begin{lem}\label{adj-no-fix}
    Assume $n\geqslant 3$. Let $V=\mathfrak{so}_n(\BR)$ be the ajoint representation of $\SO_n(\BR)$. Then $H$ does not fix non-zero vector in $V$.
\end{lem}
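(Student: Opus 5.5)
We identify $\mathfrak{so}_n(\BR)$ with the space of real skew-symmetric $n\times n$ matrices $X$, on which $g\in\SO_n(\BR)$ acts by $g\cdot X=gXg^{-1}=gXg^{t}$. The plan is to compute this action explicitly for the diagonal elements of $H$ and read off that no non-zero skew-symmetric matrix is fixed.

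Write $X=(x_{ij})$ with $x_{ji}=-x_{ij}$ and $x_{ii}=0$. Let $h=\diag(\epsilon_1,\dots,\epsilon_n)\in H$, where $\epsilon_k=\pm1$ and $\prod\epsilon_k=1$. Then $hXh^{t}$ has $(i,j)$-entry $\epsilon_i\epsilon_j x_{ij}$. So $X$ is $H$-fixed exactly when $\epsilon_i\epsilon_j x_{ij}=x_{ij}$ for all $h\in H$ and all $i\neq j$.

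Fix a pair $i\neq j$. It is enough to find some $h\in H$ with $\epsilon_i\epsilon_j=-1$, which then forces $x_{ij}=0$. Since $n\geqslant 3$, we can choose an index $k\notin\{i,j\}$ and set $\epsilon_i=-1$, $\epsilon_k=-1$, and all other entries (including $\epsilon_j$) equal to $1$. This $h$ has determinant $1$, so $h\in H$, and $\epsilon_i\epsilon_j=-1$.

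Hence every off-diagonal entry of a fixed $X$ vanishes. Its diagonal entries are already zero, so $X=0$. There is no real obstacle here; the only point needing care is the hypothesis $n\geqslant 3$, which is used to find the third index $k$ that fixes the determinant. It is genuinely necessary: for $n=2$ we have $H=\{\pm I\}$, which acts trivially on $\mathfrak{so}_2(\BR)$.
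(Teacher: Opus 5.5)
Your proof is correct and is essentially the paper's argument: the paper also takes a non-zero entry $X_{ij}$, picks a third index $k$ (using $n\geqslant 3$), and conjugates by the diagonal matrix with $-1$ in positions $i$ and $k$ to flip the sign of $X_{ij}$. Your extra remark that the hypothesis is necessary, since $H=\{\pm I\}$ acts trivially on $\mathfrak{so}_2(\BR)$, is also correct.
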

\begin{proof}
    Suppose $0\neq X\in \mathfrak{so}_n(\BR)$ is fixed by $H$. Suppose the $(i,j)$-entry $X_{ij}$ of $X$ is non-zero, $i<j$. Choose an index $k$ different from $i,j$ (possible since $n\geqslant 3$). Consider diagonal matrix $A\in H$ whose $(i,i)$-entry and $(k,k)$-entry are $-1$ and other diagonal entries are $1$. Then the $(i,j)$-entry of $AXA^{-1}$ is $-X_{ij}$, hence $X_{ij}=0$, a contradiction.
\end{proof}







To classify complex irreducible representations of $SO_n(\mathbb{R})$, we first need to consider the type of the Lie group $SO_n(\mathbb{R})$.

The Lie algebra of $\SO_n(\BR)$ belongs to different types depending on parity of $n$. Thus we distinguish these two cases.

\subsection{Case of $\SO_{2n}$ ($n\geqslant 3$)}

This is type $D$ Lie group.

Each real representation $V$ of $\SO_{2n}(\BR)$ gives rise to complex representation $V_\BC$ of $\SO_{2n}(\BC)$. We shall first compute dimension of irreducible complex representations of $\SO_{2n}(\BC)$.

Denote $I_n$ the $(n\times n)$-identity matrix,
\[S=\frac{1}{\sqrt 2}\begin{pmatrix}I_n&-iI_n\\I_n&iI_n\end{pmatrix}.\]
For matrix $J=\begin{pmatrix}0&I_n\\I_n&0\end{pmatrix}$, denote
\[\SO_{2n,J}(\BC)=\{A\in\GL_{2n}(\BC):A^tJA=J,\det A=1\}.\]
It is easy to verify that
\begin{equation}\label{isom1}
    f:\SO_{2n}(\BC)\to\SO_{2n,J}(\BC),\quad A\mapsto SAS^{-1}
\end{equation}
is an isomorphism of algebraic groups.

The maximal torus $T$ of $\SO_{2n,J}(\BC)$ can be taken as $\{\diag(a_1,\ldots,a_n,a_1^{-1},\ldots,a_n^{-1}):a_1,\ldots,a_n\in\BC^\times\}$. The charaters of $T$ are identified with $\BZ^n$. We shall denote such a character by $\mu=(\mu_1,\ldots,\mu_n)$, $\mu_i\in\BZ$. Then
\[\mu(\diag(a_1,\ldots,a_n,a_1^{-1},\ldots,a_n^{-1}))=\prod_i a_i^{\mu_i}.\]
Denote $e_i\in\BZ^n$ with only non-zero value $1$ at coordinate $i$. Choose positive roots to be $e_i\pm e_j$, $(i<j)$. It turns out that highest weight complex representations of $\SO_{2n,J}(\BC)$ correspond to dominant weights $\mu\in\BZ^n$ with $\mu_1\geqslant\cdots\geqslant\mu_{n-1}\geqslant|\mu_n|$. Denote by $V(\mu)$ the highest weight $\mu$ representation.

It is easy to compute the dimension by Weyl character formula,
\begin{equation}\label{orth-even-dim}
    \dim V(\mu)=\prod_{i<j}\frac{\mu_i-\mu_j+j-i}{j-i}\prod_{i<j}\frac{\mu_i+\mu_j+2n-i-j}{2n-i-j}.
\end{equation}

Denote $E(\mu)$ the enumerator in the formula (\ref{orth-even-dim}) of $\dim V(\mu)$.

\begin{lem}
    For a dominant weight $\mu\in\BZ^n$, debite $\mu'=(\mu_1,\ldots,\mu_{n-1},-\mu_n)$. Then $\dim V(\mu)=\dim V(\mu')$.
\end{lem}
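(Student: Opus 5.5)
We have to show that replacing $\mu_n$ by $-\mu_n$ does not change the dimension. The plan is to check directly that the right-hand side of (\ref{orth-even-dim}) is invariant under $\mu_n\mapsto-\mu_n$. The denominators do not depend on $\mu$, so it suffices to show $E(\mu)=E(\mu')$. As a sanity check, $\mu'$ is again dominant, since $\mu_1\geqslant\cdots\geqslant\mu_{n-1}\geqslant|\mu_n|=|-\mu_n|$.

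First I will isolate the factors of $E(\mu)$ that involve $\mu_n$. These are exactly the pairs with $j=n$. Factors with $i<j\leqslant n-1$ involve only $\mu_1,\ldots,\mu_{n-1}$, so they are unchanged. For $j=n$ and each $i<n$, the first product contributes $\mu_i-\mu_n+n-i$ and the second contributes $\mu_i+\mu_n+2n-i-n=\mu_i+\mu_n+n-i$. Hence for each fixed $i\leqslant n-1$ the relevant factor is
\[(\mu_i-\mu_n+n-i)(\mu_i+\mu_n+n-i)=(\mu_i+n-i)^2-\mu_n^2,\]
which is visibly even in $\mu_n$. Taking the product over $i$, together with the unchanged factors, gives $E(\mu)=E(\mu')$, and therefore $\dim V(\mu)=\dim V(\mu')$.

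There is no real obstacle; the only point needing care is the index bookkeeping, namely that $2n-i-j$ with $j=n$ equals $n-i$, so the two factors pair up symmetrically.

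Alternatively, one can argue conceptually. Conjugation by an element of $\RO_{2n,J}(\BC)$ of determinant $-1$, for instance the permutation matrix swapping the $n$-th and $2n$-th coordinates, is an outer automorphism of $\SO_{2n,J}(\BC)$. It preserves $T$, acts on characters by $\mu_n\mapsto-\mu_n$, and preserves the chosen set of positive roots. Twisting $V(\mu)$ by this automorphism therefore yields $V(\mu')$, which has the same dimension. I would present the direct computation, since it is self-contained.
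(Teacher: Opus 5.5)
Your proposal is correct and follows the paper's own proof: isolate the enumerator factors with $j=n$, namely $\prod_{i<n}(\mu_i-\mu_n+n-i)(\mu_i+\mu_n+n-i)$, and observe that this product is invariant under $\mu_n\mapsto-\mu_n$. Your rewriting of each pair as $(\mu_i+n-i)^2-\mu_n^2$ just makes that invariance explicit. The outer-automorphism alternative you sketch is also valid, but you do not need it.
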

\begin{proof}
    By the dimension formula, only need to compare the terms in enumerator involving index $n$. In the formula of $\dim V(\mu)$, the terms are
    \[\prod_{i<n}(\mu_i-\mu_n+n-i)\prod_{i<n}(\mu_i+\mu_n+n-i).\]
    This expression is unchanged when $\mu_n$ becomes $-\mu_n$.
\end{proof}

In computing dimension, we can only consider dominant weights with non-negative coordinates.

\begin{lem}\label{first type}
	When $k\geqslant 3$ and $n\geqslant 4$, we have
	\[\dim V(\underbrace{1,\ldots,1}_k,0,\ldots,0)\geqslant\dim V(2,0,\ldots,0).\]
        The equality holds only at $\dim V(1,1,1,1)=\dim V(2,0,0,0)$.
\end{lem}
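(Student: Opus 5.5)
The plan is to identify both sides with classical representations, so that the comparison reduces to binomial coefficients. Via the isomorphism (\ref{isom1}), the standard representation $\BC^{2n}$ of $\SO_{2n,J}(\BC)$ has weights $\pm e_i$.
\begin{itemize}
\item For $1\leqslant k\leqslant n-1$, the exterior power $\Lambda^k\BC^{2n}$ is irreducible with highest weight $e_1+\cdots+e_k$. Hence $\dim V(1^k,0,\ldots,0)=\binom{2n}{k}$.
\item For $k=n$, the representation $\Lambda^n\BC^{2n}$ splits as $V(1,\ldots,1)\oplus V(1,\ldots,1,-1)$. The two summands have equal dimension by the preceding lemma, so $\dim V(1,\ldots,1)=\tfrac12\binom{2n}{n}=\binom{2n-1}{n-1}$.
\item The space $\Sym^2\BC^{2n}$ decomposes as the trivial representation plus $V(2,0,\ldots,0)$. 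Hence $\dim V(2,0,\ldots,0)=n(2n+1)-1=(2n-1)(n+1)$.
\end{itemize}
If one prefers not to quote these classical facts, each dimension can be checked directly from formula (\ref{orth-even-dim}). Most factors cancel, and only the terms involving the indices $\leqslant k$ survive.

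\textbf{Case $3\leqslant k\leqslant n-1$.} Here $3\leqslant k\leqslant 2n-3$, so unimodality of binomial coefficients gives $\binom{2n}{k}\geqslant\binom{2n}{3}=\tfrac{2n(2n-1)(2n-2)}{6}$. Comparing with $(2n-1)(n+1)$, it suffices to show $\tfrac{2n(n-1)}{3}> n+1$. This is equivalent to $2n^2-5n-3>0$, that is, $(2n+1)(n-3)>0$, which is strict for $n\geqslant 4$. So there is strict inequality in this range.

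\textbf{Case $k=n$.} For $n=4$, direct computation gives $\tfrac12\binom84=35=7\cdot 5$, which is exactly the stated equality case. For $n\geqslant 5$ we have $4\leqslant n-1\leqslant (2n-1)-4$, so $\binom{2n-1}{n-1}\geqslant\binom{2n-1}{4}=\tfrac{(2n-1)(n-1)(2n-3)(n-2)}{6}$. It then suffices to show $(n-1)(2n-3)(n-2)>6(n+1)$. At $n=5$ this reads $84>36$. The left side is a cubic with increasing factors and the right side is linear, so the inequality persists for all $n\geqslant 5$.

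\textbf{Main obstacle.} The only delicate points are identifying the dimension in the case $k=n$, where one must use the half-spin-type splitting of $\Lambda^n$ together with the preceding lemma, and isolating $n=4$, $k=4$ as the unique equality case. All remaining steps are elementary binomial estimates.
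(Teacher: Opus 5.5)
Your proof is correct, but it takes a different route from the paper. The paper never identifies the representations. It works purely with the numerators $E(\lambda)$ of formula (\ref{orth-even-dim}). Term-by-term ratios show that $\dim V(\lambda_k)<\dim V(\lambda_{k+1})$ for $k\leqslant n-2$ (ratio $\frac{k+1}{2n-k}$) and that $\dim V(\lambda_{n-1})>\dim V(\lambda_n)$ (ratio $\frac{2n}{n+1}$). So only $\lambda_3$ and $\lambda_n$ need to be compared with $(2,0,\ldots,0)$, which the paper does by two further telescoping products.

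You instead use the classical identifications $\dim V(1^k,0,\ldots,0)=\binom{2n}{k}$ for $k<n$, $\dim V(1,\ldots,1)=\frac12\binom{2n}{n}$ and $\dim V(2,0,\ldots,0)=(2n-1)(n+1)$. After that, unimodality of binomial coefficients does all the work. In fact the paper's ratios are exactly $\binom{2n}{k}/\binom{2n}{k+1}$ and $\binom{2n}{n-1}/\tfrac12\binom{2n}{n}$. Your version therefore makes transparent what the paper's computation is doing, and the unique equality case $35=35$ at $n=k=4$ drops out immediately.

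The cost is reliance on the decompositions of $\Lambda^k\BC^{2n}$ and $\Sym^2\BC^{2n}$. These are standard (e.g.\ Fulton--Harris, Thm.~19.2, which the paper itself cites in Lemma \ref{even-exception}). The paper's argument, by contrast, is self-contained from the Weyl dimension formula.

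Your arithmetic checks out in both cases. For $3\leqslant k\leqslant n-1$ the inequality reduces to $(2n+1)(n-3)>0$. For $k=n\geqslant 5$ it reduces to $(n-1)(2n-3)(n-2)>6(n+1)$, which holds because $\frac{n-1}{n+1}(2n-3)(n-2)$ is increasing and already exceeds $6$ at $n=5$.
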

\begin{proof}
	Denote $\mu=(2,0,\ldots,0)$ and $\lambda_k=(\underbrace{1,\ldots,1}_k,0,\ldots,0)$. We prove that when $k\leqslant n-2$, we have $\dim V(\lambda_{k+1})>\dim V(\lambda_k)$. Only need to compare terms in $E(\lambda)$ involving index $k+1$. The terms in $E(\lambda_k)$ are
	\[\prod_{i\leqslant k}(k+2-i)\prod_{j\geqslant k+2}(j-k-1)\prod_{i\leqslant k}(2n-i-k)\prod_{j\geqslant k+2}(2n-j-k-1).\]
	The terms in $E(\lambda_{k+1})$ are
	\[\prod_{i\leqslant k}(k+1-i)\prod_{j\geqslant k+2}(j-k)\prod_{i\leqslant k}(2n-i-k+1)\prod_{j\geqslant k+2}(2n-j-k).\]
	Compare them, we get
	\[\frac{E(\lambda_k)}{E(\lambda_{k+1})}=\frac{(k+1)\times 1\times (2n-2k)\times 1}{1\times (n-k)\times (2n-k)\times 2}=\frac{k+1}{2n-k}<1.\]
	Similarly, we see
	\[\frac{E(\lambda_{n-1})}{E(\lambda_n)}=\frac{((n-1)+1)\times (2n-2(n-1))}{1\times (2n-(n-1))}=\frac{2n}{n+1}>1.\]
	From this, we only need to consider the case $\lambda_n$ $(n\geqslant 4)$ and $\lambda_3$ $(n\geqslant 4)$. 
	
	First consider $\lambda_n$. We have
	\begin{align*}
		\frac{E(\mu)}{E(\lambda_n)}=&\frac{\prod_{j=2}^n(j+1)}{\prod_{j=2}^n(j-1)}\times\frac{\prod_{j=2}^n(2n+1-j)\prod_{2\leqslant i<j}(2n-i-j)}{\prod_{1\leqslant i<j}(2n+2-i-j)}\\
		=&\frac{(n+1)n}{2}\times\frac{\prod_{j=2}^n(2n+1-j)\prod_{2\leqslant i<j}(2n-i-j)}{\prod_{0\leqslant i<j\leqslant n-1}(2n-i-j)}\\
		=&\frac{(n+1)n}{2}\times\frac{\prod_{j=2}^n(2n+1-j)\prod_{i=2}^{n-1}(n-i)}{\prod_{j=1}^{n-1}(2n-j)\prod_{j=2}^{n-1}(2n-1-j)}\\
		=&\frac{(n+1)n}{2}\times\frac{1\times\cdots\times(n-2)}{n\times\cdots\times(2n-3)}
	\end{align*}
	Since $n\geqslant 4$, we have
	\[\frac{(n+1)n}{2}\times\frac{1\times \cdots\times(n-2)}{n\times\cdots\times (2n-3)}\leqslant \frac{(n+1)n}{2}\times\frac{1\times 2}{n(n+1)}=1.\]
	The equality holds only when $n=4$, i.e. $\dim V(1,1,1,1)=\dim V(2,0,0,0)$.

	Next consider $\lambda_3$. When $n=4$, we have $\dim V(\lambda_3)>\dim V(\lambda_4)\geqslant \dim V(\mu)$. Thus assume $n\geqslant 5$. We have
	\[E(\lambda_3)=\prod_{\substack{i<j\leqslant 3\\4\leqslant i<j}}(j-i)\prod_{\substack{i\leqslant 3\\ j\geqslant 4}}(1+j-i)\prod_{i<j\leqslant 3}(2+2n-i-j)\prod_{4\leqslant i<j}(2n-i-j)\prod_{\substack{i\leqslant 3\\ j\geqslant 4}}(1+2n-i-j).\]
	Then
	\begin{align*}
		\frac{E(\mu)}{E(\lambda_3)}=&\frac{\prod_{j\geqslant 2}(j+1)\prod_{j\geqslant 3}(j-2)\prod_{j\geqslant 4}(j-3)}{\prod_{i<j\leqslant 3}(j-i)\prod_{j\geqslant 4}j(j-1)(j-2)}\times\\
		&\frac{\prod_{j\geqslant 2}(2n+1-j)\prod_{j\geqslant 3}(2n-2-j)\prod_{j\geqslant 4}(2n-3-j)}{\prod_{i<j\leqslant 3}(2+2n-i-j)\prod_{j\geqslant 4}(2n-j)(2n-1-j)(2n-2-j)}\\
		=&\frac{3(n+1)}{(n-1)(n-2)}\times\frac{n-2}{2n}<1
	\end{align*}
\end{proof}

\begin{lem}\label{second type}
	When $k\geqslant 1$, we have
	\[\dim V(2,\underbrace{1,\ldots,1}_k,0,\ldots,0)>\dim V(2,0,\ldots,0).\]
\end{lem}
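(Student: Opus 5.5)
Set $\mu=(2,0,\ldots,0)$ and $\nu_k=(2,\underbrace{1,\ldots,1}_k,0,\ldots,0)$, with $1\leqslant k\leqslant n-1$. The approach is the one used in Lemma \ref{first type}: both weights give the same denominator in (\ref{orth-even-dim}), so it suffices to show $E(\nu_k)>E(\mu)$. The cleanest way is to interpolate through an intermediate weight. Since $\nu_k$ dominates $\mu$ in the dominance order, I want to avoid a raw ratio computation. The plan is a chain
\[\dim V(\nu_k)\geqslant \dim V(\nu_1)>\dim V(\mu),\]
proving the two links separately.

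\textbf{Step 1 (base case $k=1$).} For $\nu_1=(2,1,0,\ldots,0)$ versus $\mu=(2,0,\ldots,0)$, only the factors of $E$ involving the index $2$ change. The factor for the pair $(1,2)$ has difference part $(2-\mu_2+1)$, which goes from $3$ to $2$, and sum part $(2+\mu_2+2n-3)$, which goes from $2n-1$ to $2n$. For $j\geqslant 3$ the pair $(2,j)$ contributes $(\mu_2+j-2)(\mu_2+2n-2-j)$, which goes from $(j-2)(2n-2-j)$ to $(j-1)(2n-1-j)$. Taking the ratio gives
\[\frac{E(\nu_1)}{E(\mu)}=\frac{2\cdot 2n}{3(2n-1)}\cdot\prod_{j=3}^{n}\frac{(j-1)(2n-1-j)}{(j-2)(2n-2-j)}.\]
The first product telescopes to $n-1$, and the second to $\frac{n-2}{\,1\,}\cdot$ (a telescoping quotient $\frac{2n-4}{n-2}$). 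More precisely, $\prod_{j=3}^n\frac{2n-1-j}{2n-2-j}=\frac{2n-4}{n-2}$, which needs care when $j=n$ makes the factor $n-2$. Combining these shows the ratio is of order $n$, so it exceeds $1$ for $n\geqslant 3$. Getting the endpoint bookkeeping right here is routine.

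\textbf{Step 2 (monotonicity in $k$).} Show $\dim V(\nu_{k+1})>\dim V(\nu_k)$ for $1\leqslant k\leqslant n-3$. Exactly as in Lemma \ref{first type}, only the factors involving the index $k+2$ change, and the resulting ratio $E(\nu_k)/E(\nu_{k+1})$ should again be a simple rational function like $\frac{k+2}{2n-k-1}$, since the first coordinate $2$ only alters the factors paired with index $1$. I would recompute it explicitly, treating the pair $(1,k+2)$ separately, and check that it is $<1$.

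\textbf{Step 3 (last coordinate).} The last step, from $k=n-2$ to $k=n-1$, behaves like the $\lambda_{n-1}\to\lambda_n$ case of Lemma \ref{first type}: the ratio there was $>1$, so the dimension drops. I therefore expect this to be the main obstacle. I would handle $\nu_{n-1}=(2,1,\ldots,1)$ directly by computing $E(\nu_{n-1})/E(\mu)$ in closed form with the same telescoping used for $\lambda_n$. I expect to get something like $\frac{n(n+1)}{\ldots}$ times a ratio of binomial-type products that is large, giving $>1$ for all $n\geqslant 3$. If that comparison is delicate for small $n$, I would check $n=3,4$ numerically from (\ref{orth-even-dim}). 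By the earlier lemma, $\mu_n\mapsto-\mu_n$ handles the sign of the last coordinate, so nonnegative weights suffice.
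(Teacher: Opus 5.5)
Your plan follows the paper's proof. It proves monotonicity $\dim V(\nu_{k+1})>\dim V(\nu_k)$ for $k\leqslant n-3$, notes the drop at the last step, and compares the two endpoints $\nu_1$ and $\nu_{n-1}$ directly with $\mu$. Your Step 1 ratio evaluates to $\frac{8n(n-1)}{3(2n-1)}$, since the two telescoping products are $n-1$ and $2$; this is exactly the reciprocal of the paper's $\frac{3(2n-1)}{8n(n-1)}$.

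Two corrections for Steps 2 and 3, neither of which changes the outcome:
\begin{itemize}
\item In Step 2 the ratio is not $\frac{k+2}{2n-k-1}$. It is $\frac{(k+1)(k+3)}{k+2}\cdot\frac{2n-k-1}{(2n-k)(2n-k-2)}$, which is still $<1$ for $k\leqslant n-3$.
\item Step 3 works out to $E(\mu)/E(\nu_{n-1})=\frac{(n+1)^2}{4n}\cdot\frac{(n-2)!\,(n-1)!}{(2n-3)!}$, which is $<1$ for all $n\geqslant 3$.
\end{itemize}
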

\begin{proof}
	Denote $\mu=(2,0\ldots,0)$, $\lambda_k=(2,\underbrace{1,\ldots,1}_k,0,\ldots,0)$, $k\leqslant n-1$.
	
	We prove that if $k\leqslant n-3$, then $\dim V(\lambda_{k+1})>\dim V(\lambda_k)$. Only need to compare terms in $E(\lambda)$ involving index $k+2$. The terms in $E(\lambda_k)$ are
	\[(k+3)\prod_{i=2}^{k+1}(k+3-i)\prod_{j\geqslant k+3}(j-k-2)\cdot(2n-k-1)\prod_{i=2}^{k+1}(2n-i-k-1)\prod_{j\geqslant k+3}(2n-k-j-2).\]
	The terms in $E(\lambda_{k+1})$ are
	\[(k+2)\prod_{i=2}^{k+1}(k+2-i)\prod_{j\geqslant k+3}(j-k-1)\cdot (2n-k)\prod_{i=2}^{k+1}(2n-i-k)\prod_{j\geqslant k+3}(2n-k-j-1).\]
	Thus
	\begin{align*}
		\frac{E(\lambda_k)}{E(\lambda_{k+1})}=&\frac{(k+3)\times(k+1)\times 1\times (2n-k-1)\times (2n-2k-2)\times 1}{(k+2)\times 1\times (n-k-1)\times (2n-k)\times (2n-k-2)\times 2}\\
		=&\frac{(k+1)(k+3)}{k+2}\times\frac{2n-k-1}{(2n-k-2)(2n-k)}<1
	\end{align*}
	Similarly,
	\[\frac{E(\lambda_{n-2})}{E(\lambda_{n-1})}=\frac{2(n+1)^2(n-1)}{n^2(n+2)}>1.\]
	
	To conclude, only need to consider the case $\lambda_{n-1}$ and $\lambda_1$.
	
	First consider $\lambda_{n-1}$. We have
	\begin{align*}
		\frac{E(\mu)}{E(\lambda_{n-1})}=&\frac{\prod_{j=2}^n(j+1)}{\prod_{j=2}^nj}\times\frac{\prod_{j=2}^n(2n+1-j)\prod_{2\leqslant i<j}(2n-i-j)}{\prod_{j=2}^n(2n+2-j)\prod_{2\leqslant i<j}(2n+2-i-j)}\\
		=&\frac{(n+1)^2}{4n}\times\frac{\prod_{i=2}^{n-1}(n-i)}{\prod_{j=2}^{n-1}(2n-1-j)}\\
		=&\frac{(n+1)^2}{4n}\times\frac{1\times\cdots\times(n-2)}{n\times\cdots\times(2n-3)}
	\end{align*}
	When $n=3$, this quantity is $4/9<1$. When $n\geqslant 4$, we have
	\[\frac{(n+1)^2}{4n}\times\frac{1\times\cdots\times(n-2)}{n\times\cdots\times(2n-3)}\leqslant\frac{(n+1)^2}{4n}\times\frac{1\times 2}{n(n+1)}<1.\]
	
	Next consider $\lambda_1$. When $n=3$, we have $\dim V(\lambda_1)>\dim V(\lambda_2)>\dim V(\mu)$. Thus assume $n\geqslant 4$.  We have
	\[E(\lambda_1)=2\prod_{j=3}^n(j+1)\prod_{j=3}^n(j-1)\prod_{3\leqslant i<j}(j-i)\cdot(2n)\prod_{j=3}^n(2n-j+1)\prod_{j=3}^n(2n-j-1)\prod_{3\leqslant i<j}(2n-i-j).\]
	Then
	\begin{align*}
		\frac{E(\mu)}{E(\lambda_1)}=&\frac{\prod_{j=2}^n(j+1)\prod_{j\geqslant 3}(j-2)\prod_{j\geqslant 2}(2n+1-j)\prod_{j\geqslant 3}(2n-2-j)}{2\prod_{j\geqslant 3}(j+1)\prod_{j\geqslant 3}(j-1)\cdot(2n)\prod_{j\geqslant 3}(2n+1-j)\prod_{j\geqslant 3}(2n-1-j)}\\
		=&\frac{3(2n-1)}{8n(n-1)}<1
	\end{align*}
\end{proof}

\begin{lem}
    If $V(\mu)$ is irreducible representation of $\SO_{2n,J}(\BC)$ such that $f(H)$ (see equation (\ref{isom1})) fixes a non-zero vector, then $\sum\mu_i$ is even.
\end{lem}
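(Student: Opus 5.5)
The plan is to use a single central element of $H$, namely $-I_{2n}$, instead of the whole group $H$. It lies in $H$ because it is diagonal with entries $\pm 1$ and $\det(-I_{2n})=(-1)^{2n}=1$. Since $f$ is conjugation by $S$, we get $f(-I_{2n})=-I_{2n}$. So if $f(H)$ fixes a non-zero vector $v\in V(\mu)$, then in particular $-I_{2n}$ fixes $v$.

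Next I compute how $-I_{2n}$ acts on $V(\mu)$. First, $-I_{2n}$ lies in the maximal torus $T$ of $\SO_{2n,J}(\BC)$: it is the element $\diag(a_1,\ldots,a_n,a_1^{-1},\ldots,a_n^{-1})$ with all $a_i=-1$. On a weight vector of weight $\nu\in\BZ^n$ it therefore acts by the scalar
\[\nu(-I_{2n})=\prod_i(-1)^{\nu_i}=(-1)^{\sum_i\nu_i}.\]

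It remains to see that all weights of $V(\mu)$ give the same sign. Every weight $\nu$ of $V(\mu)$ has the form $\mu$ minus a non-negative integer combination of positive roots. The positive roots are $e_i\pm e_j$, and each has coordinate sum $0$ or $2$. Hence $\sum_i\nu_i\equiv\sum_i\mu_i \pmod 2$ for every weight $\nu$. Since $V(\mu)$ is the direct sum of its weight spaces, $-I_{2n}$ acts on all of $V(\mu)$ by the scalar $(-1)^{\sum_i\mu_i}$. One could also argue that $-I_{2n}$ is central, so by Schur's lemma it acts by a scalar, and this scalar can be read off from the highest weight vector.

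Finally, $v\neq 0$ and $(-I_{2n})\cdot v=v$ force $(-1)^{\sum_i\mu_i}=1$, so $\sum_i\mu_i$ is even. I do not expect a real obstacle. The only points to check carefully are that $-I_{2n}$ lies in $H$ (this uses that the matrix size $2n$ is even) and that its image under $f$ lies in the chosen torus $T$. Both are immediate, since $-I_{2n}$ is fixed by conjugation.
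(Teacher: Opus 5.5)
Your proof is correct and follows essentially the same route as the paper: both use that $-I_{2n}\in f(H)\cap T$ acts on a weight-$\nu$ vector by $(-1)^{\sum\nu_i}$, and that every root $e_i\pm e_j$ has even coordinate sum, so all weights of $V(\mu)$ share the parity of $\sum\mu_i$. The only cosmetic difference is the order of the two steps: the paper first extracts a weight $\mu'$ of $v$ that is trivial on $f(H)\cap T$ and then transfers the parity from $\mu'$ to $\mu$, whereas you show directly that $-I_{2n}$ acts on all of $V(\mu)$ by the scalar $(-1)^{\sum\mu_i}$.
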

\begin{proof}
    Suppose $v\in V(\mu)$ is fixed by $f(H)$. Decompose $v$ into weight spaces, we see each component is fixed by $f(H)\cap T$. Thus there is a weight $\mu'$ that is trivial on $f(H)\cap T$. Since $-1\in f(H)\cap T$, we see $\mu'(-1)=(-1)^{\sum\mu'_i}=1$, hence $\sum\mu'_i$ is even.

    Now $\mu-\mu'$ is integer sum of roots, while each root $\alpha$ also satisfies $\sum\alpha_i$ is even, hence $\sum\mu_i$ is even.
\end{proof}

\begin{prop}\label{type1-condition}
    Let $\mu\in\BZ^n$ be a dominant weight with $\sum\mu_i$ even. If $\dim V(\mu)\leqslant\dim V(2,0,\ldots,0)$, then $\mu$ is $(2,0,\ldots,0)$ or of the following form
    \[0,\quad (1,1,0,\ldots,0),\quad (1,1,1,\pm 1).\]
\end{prop}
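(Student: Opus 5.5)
We work with the dimension formula (\ref{orth-even-dim}) for $\SO_{2n,J}(\BC)$, $n\geqslant 3$. By the preceding lemma, $\dim V(\mu)=\dim V(\mu')$, so we may assume all $\mu_i\geqslant 0$ and restore the sign of $\mu_n$ at the end; this sign is where $(1,1,1,\pm1)$ comes from. The plan has three steps. First, reduce to the cases $\mu_1\leqslant 2$ by a monotonicity argument. Second, enumerate the surviving shapes with $\mu_1\leqslant 2$ and $\sum\mu_i$ even, using Lemmas \ref{first type} and \ref{second type}. Third, handle the finitely many small cases by hand.

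\textbf{Monotonicity.} Every factor of $E(\mu)$ has the form $\mu_i-\mu_j+j-i$ or $\mu_i+\mu_j+2n-i-j$. If we raise $\mu_1$ by one, keeping the other coordinates fixed, every factor involving index $1$ strictly increases. The weight stays dominant, and the remaining factors are unchanged. The same holds for raising any block of equal leading coordinates: replacing $\mu$ by $\mu+e_1+\cdots+e_k$ with $\mu_k>\mu_{k+1}$ or $k=n$ keeps the weight dominant and does not decrease any factor. Hence $\dim V$ is increasing with respect to dominance in this coordinatewise sense for nonnegative dominant weights.

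\textbf{Reduction to $\mu_1\leqslant 2$.} Suppose $\mu_1\geqslant 3$. Then $\mu$ dominates $(3,0,\ldots,0)$ coordinatewise, so $\dim V(\mu)\geqslant \dim V(3,0,\ldots,0)$. We then check directly that $\dim V(3,0,\ldots,0)>\dim V(2,0,\ldots,0)$, which follows from the formula: the ratio involves only the factors containing index $1$, all of which grow. So $\mu_1\leqslant 2$.

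\textbf{Enumeration.} With $\mu_1\leqslant 2$, the candidates are $0$, the shapes $(1^k,0,\ldots)$, and the shapes $(2,1^k,0,\ldots)$. The remaining shapes, $(2,2,\ldots)$ and $(2,2,1,\ldots)$, dominate $(2,1,0,\ldots)$ coordinatewise. By monotonicity and Lemma \ref{second type} with $k=1$, they are strictly bigger than $V(2,0,\ldots,0)$.

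For $(2,1^k,\ldots)$ with $k\geqslant1$, Lemma \ref{second type} rules them out directly. This leaves $(2,0,\ldots,0)$ itself.

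For $(1^k,0,\ldots)$, parity forces $k$ to be even.
\begin{itemize}
\item $k=2$ gives the allowed weight $(1,1,0,\ldots,0)$.
\item For $k\geqslant 4$ and $n\geqslant4$, Lemma \ref{first type} gives strict inequality $\dim V>\dim V(2,0,\ldots,0)$. The only exception is $(1,1,1,1)$ with $n=4$, where equality holds; this yields $(1,1,1,\pm1)$.
\item For $n=3$, $k$ even means $k=2$ only, since $k\leqslant 3$.
\end{itemize}

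\textbf{Main obstacle.} The main risk is the monotonicity step. One must check that dominance is preserved and that no factor decreases. The factors $\mu_i-\mu_j+j-i$ with $i$ outside the raised block and $j$ inside it would decrease, and this is exactly why we raise only a leading block. A careful statement is the following: adding $e_1+\cdots+e_k$ to a dominant weight (with $\mu_k>\mu_{k+1}$, or $k=n$ with $\mu_n\geqslant0$) increases every factor with exactly one index in $\{1,\ldots,k\}$ and leaves the others unchanged. Every comparison used above is of this type.
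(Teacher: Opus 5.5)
Your monotonicity step does not support the comparisons you make with it. What you actually prove is that adding $e_1+\cdots+e_k$ (that is, adding a dominant weight) does not decrease any factor of $E(\mu)$. But such a move always raises $\mu_1$. So it can never reach a weight with $\mu_1=2$ from $(2,1,0,\ldots,0)$. Nor can it reach $(3,3,0,\ldots,0)$, or any $\mu$ with $\mu_1\geqslant 3$ and $\mu_1-\mu_2\leqslant 2$, from $(3,0,\ldots,0)$. Your closing claim that every comparison used is of this type is therefore false. The coordinatewise monotonicity you fall back on is also false in type $D$, because raising $\mu_2$ alone lowers the factor $\mu_1-\mu_2+1$. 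Concretely, for $n=3$ you have $(2,2,2)\geqslant(2,1,0)$ coordinatewise, yet (\ref{orth-even-dim}) gives $\dim V(2,2,2)=35<64=\dim V(2,1,0)$. Similarly, the computation in Lemma \ref{first type} shows $\dim V(1,\ldots,1,0)>\dim V(1,\ldots,1)$. So both your reduction to $\mu_1\leqslant 2$ and your disposal of the shapes $(2,2,\ldots)$ are unproved. The final inequalities against $\dim V(2,0,\ldots,0)$ do happen to hold (e.g.\ $35>20$), but your argument does not establish them.

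To repair this, compare only along dominant differences, splitting on $\mu_1-\mu_2$ as the paper does (with all $\mu_i\geqslant 0$).
\begin{itemize}
\item If $\mu_1-\mu_2\geqslant 2$, then $\mu-(2,0,\ldots,0)$ is dominant. Hence $\dim V(\mu)\geqslant\dim V(2,0,\ldots,0)$, with equality only at $\mu=(2,0,\ldots,0)$.
\item If $\mu_1-\mu_2=1$ and $\mu\neq(1,0,\ldots,0)$ (the latter is odd), then $\mu-(2,1,\ldots,1,0,\ldots,0)$ is dominant and Lemma \ref{second type} applies.
\item If $\mu_1=\mu_2$, $\mu$ has at least three nonzero entries, and $n\geqslant 4$, subtract $(1,\ldots,1,0,\ldots,0)$ and apply Lemma \ref{first type}.
\end{itemize}
What remains is $(a,a,0,\ldots,0)$ with $a\geqslant 2$, and for $n=3$ also $(a,a,b)$ with $a\geqslant 2$, $b\neq 0$. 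This is exactly your $(2,2,\ldots)$ family, which the paper's own case (ii) also passes over, and it needs an honest computation. Comparing the factors of $E$ that involve index $2$ gives $\dim V(2,2,0,\ldots,0)/\dim V(2,1,0,\ldots,0)=(2n+1)(2n-3)/(8(n-1))>1$ for $n\geqslant 3$. For $n=3$ one checks $\dim V(2,2,\pm 1)=70$ and $\dim V(2,2,\pm 2)=35$, both exceeding $20$. Adding dominant weights then covers all larger $a$.
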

\begin{proof}
    If $\lambda$ is dominant and $\lambda\neq 0$, then $\dim V(\mu+\lambda)>\dim V(\mu)$. Thus if $\mu_1-\mu_2\geqslant 2$, we are done. 
	
    \textbf{Case (i)}: $\mu_1-\mu_2=1$. Reduce to $\mu=(2,\underbrace{1,\ldots,1}_k,0,\ldots,0)$, $1\leqslant k\leqslant n-1$. The claim follows from Lemma \ref{second type}.
	
    \textbf{Case (ii)}: $\mu_1=\mu_2$. Reduce to $\mu=(\underbrace{1,\ldots,1}_k,0,\ldots,0)$, $1\leqslant k\leqslant n$. When $k\geqslant 3$ and $n\geqslant 4$, the claim follows from Lemma \ref{first type}.
	
    When $n=3$, we have $(1,0,0)$, $(1,1,0)$ and $(1,1,1)$. Considering the parity condition, we get the claim.
\end{proof}

\begin{lem}\label{even-exception}
    Consider $n=4$. Then $V(1,1,1,1)\oplus V(1,1,1,-1)$ is isomorphic to $\Lambda^4\BC^8$ (the wedge product) and $H$ does not fix a non-zero vector in $\Lambda^4\BC^8$.
\end{lem}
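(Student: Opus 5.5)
We treat the two claims in turn: first the identification of $\Lambda^4\BC^8$, then the absence of $H$-fixed vectors.

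\textbf{Step 1: $\Lambda^4\BC^8\cong V(1,1,1,1)\oplus V(1,1,1,-1)$.} We work with $\SO_{8,J}(\BC)$ acting on $\BC^8$. Transporting the standard basis by $S$, we get a basis $f_1,\dots,f_4,f_{-1},\dots,f_{-4}$ with $T$-weights $\pm e_i$. The weights of $\Lambda^4\BC^8$ are then all sums of four distinct elements of $\{\pm e_i\}$. The vector $f_1\wedge f_2\wedge f_3\wedge f_4$ has weight $(1,1,1,1)$, and $f_1\wedge f_2\wedge f_3\wedge f_{-4}$ has weight $(1,1,1,-1)$. Each is a highest weight vector: every positive root vector $e_i\pm e_j$ either sends it into a wedge with a repeated factor or produces a weight that does not occur. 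So both $V(1,1,1,\pm1)$ occur as subrepresentations of $\Lambda^4\BC^8$, and they are distinct irreducibles, so their sum is direct.

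Next we compare dimensions. Using (\ref{orth-even-dim}) with $n=4$, Lemma \ref{first type} gives $\dim V(1,1,1,1)=\dim V(2,0,0,0)=35$. The preceding lemma on $\mu\mapsto\mu'$ gives $\dim V(1,1,1,-1)=35$ as well. Since $\dim\Lambda^4\BC^8=\binom84=70=35+35$, we obtain the isomorphism. This step is routine.

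\textbf{Step 2: no nonzero $H$-fixed vector.} Here it is cleaner to go back to the original coordinates. The isomorphism $f$ of (\ref{isom1}) is conjugation by $S$, so $\Lambda^4\BC^8$ as a representation of $\SO_8(\BC)$ (hence of $H\subseteq\SO_8(\BR)$) is just $\Lambda^4$ of the standard representation. Its basis is $\{e_I=e_{i_1}\wedge e_{i_2}\wedge e_{i_3}\wedge e_{i_4}: |I|=4\}$, where $e_1,\dots,e_8$ is the standard basis.

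Let $h=\diag(\epsilon_1,\dots,\epsilon_8)\in H$, with $\epsilon_i=\pm1$ and $\prod\epsilon_i=1$. Then $h$ acts on $e_I$ by the scalar $\prod_{i\in I}\epsilon_i$. Because $H$ acts diagonally in this basis, a vector $v=\sum c_Ie_I$ is $H$-fixed exactly when every $e_I$ with $c_I\neq0$ is $H$-fixed. So it suffices to find, for each $I$, some $h\in H$ with $\prod_{i\in I}\epsilon_i=-1$.

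Fix $I$ with $|I|=4$. Choose $i\in I$ and $j\notin I$; such a $j$ exists since $4<8$. Take $h$ with $\epsilon_i=\epsilon_j=-1$ and all other entries $1$. Then $\det h=1$, so $h\in H$, and $h$ acts on $e_I$ by $-1$. Hence no $e_I$ is fixed, and $H$ fixes no nonzero vector, in the same spirit as Lemma \ref{adj-no-fix}.

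The main obstacle is bookkeeping between the $J$-model and the standard model. Both the highest weights and the splitting are computed in $\SO_{8,J}$, while $H$ lives in the standard model. Since $f$ is conjugation by $S$, which carries $\BC^8$ to itself compatibly, $\Lambda^4$ corresponds to $\Lambda^4$ on both sides, and this resolves the issue. Finally, because $\Lambda^4\BC^8$ has no nonzero $H$-fixed vector, neither does either summand $V(1,1,1,\pm1)$; this is what rules them out in the $n=4$ case of Proposition \ref{type1-condition}.
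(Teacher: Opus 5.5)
Your argument is correct, and for the second claim it is the paper's argument. The paper only says that the vectors $e_i\wedge e_j\wedge e_k\wedge e_l$ are $H$-eigenvectors and none is fixed; you make this explicit by choosing $i\in I$, $j\notin I$ and the element $h$ with $\epsilon_i=\epsilon_j=-1$.

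The first claim is where you genuinely differ. The paper just cites \cite[Theorem 19.2]{FH91}. You instead verify the decomposition within the paper's own setup:
\begin{itemize}
\item $f_1\wedge f_2\wedge f_3\wedge f_{\pm4}$ are highest weight vectors, because for every positive root $\alpha$ the weight $\lambda+\alpha$ has a coordinate equal to $2$, so it is not a weight of $\Lambda^4\BC^8$.
\item The decomposition then follows from the count $35+35=\binom84$, using Lemma \ref{first type} and the lemma on $\mu\mapsto\mu'$.
\end{itemize}

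The citation is shorter and gives the general fact $\Lambda^n\BC^{2n}\cong V(1,\ldots,1)\oplus V(1,\ldots,1,-1)$ for every $n$. Your route is self-contained. Moreover, its highest-weight-vector half already gives the embeddings $V(1,1,1,\pm1)\subseteq\Lambda^4\BC^8$, and that is all the proof of Proposition \ref{orth-flag2} uses. So the dimension count is needed only for the full isomorphism stated in the lemma.
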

\begin{proof}
    The first claim follows from, for example \cite[Theorem 19.2, p.287]{FH91}. Consider the second claim. Use natural action of $\SO_8(\BR)$ on $\Lambda^4\BC^8$. Denote $e_i$ the standard basis of $\BC^8$. It is clear $e_i\wedge e_j\wedge e_k\wedge e_l$, $i<j<k<l$ are eigenvectors of $H$, but none of them is fixed by $H$.
\end{proof}

\begin{proof}[Proof of Proposition \ref{orth-flag2} for $\SO_{2n}(\BR)$, $n\geqslant 3$]
    Let $V$ be an irreducible real representation of $\SO_{2n}(\BR)$ such that $\dim V$ is no greater than isospectral model and $H$ fixes some non-zero vector in $V$. There is an irreducible summand $V(\mu)\subseteq V_\BC$ such that $H$ fixes some non-zero vector in $V(\mu)$. 
    
    Since $\dim V(\mu)\leqslant \dim V\leqslant\dim V(2,0,\ldots,0)$, Proposition \ref{type1-condition} implies $\mu$ should be $(2,0,\ldots,0)$ or $0$, $(1,1,0,\ldots,0)$, $(1,1,1,\pm 1)$. The cases $(2,0,\ldots,0)$ and $0$ are what we want to prove. The case $(1,1,1,\pm 1)$ is ruled out by above Lemma \ref{even-exception}. The case $(1,1,0,\ldots,0)$ corresponds to adjoint representation, which is ruled out by Lemma \ref{adj-no-fix}.
\end{proof}

\subsection{Case of $\SO_4(\BR)$}

The only highest weight representations $V(\mu_1,\mu_2)$ of $\SO_4(\BC)$ whose dimension is no greater than $\dim (2,0)=9$ and satisfies $2\mid \mu_1+\mu_2$ are $V(2,0)$ and
\[\dim V(0,0)=1,\quad \dim V(1,\pm1)=3,\quad\dim V(2,\pm 2)=5,\quad \dim V(3,\pm 3)=7,\quad\dim V(4,\pm 4)=9.\]

Among them, if we exclude trivial representation and those without $H$-fixed vectors, we are left with
\begin{equation}\label{rep-list}
\dim V(2,\pm 2)=5,\quad \dim V(3,\pm 3)=7,\quad\dim V(4,\pm 4)=9.
\end{equation}
This is because $V(1,1)\oplus V(1,-1)\cong\mathfrak{so}_4(\BC)$ and Lemma \ref{adj-no-fix} says the adjoint representation does not have $H$-fixed vector.

\begin{lem}
    Theorem \ref{orth-flag-min} holds for $\SO_4(\BR)/\RS(\RO_3(\BR)\times \RO_1(\BR))$, 
\end{lem}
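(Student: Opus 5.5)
We argue as in the proof of Theorem \ref{orth-flag-min} for $n\neq 4$, but replace Proposition \ref{orth-flag2} (which fails for $\SO_4(\BR)$) with a direct look at the short list (\ref{rep-list}). Put $K=\RS(\RO_3(\BR)\times\RO_1(\BR))$ and let $V$ be a $\SO_4(\BR)$-equivariant embedding of $\SO_4(\BR)/K$ of minimal dimension. Since the isospectral model $V(2,0)$ is such an embedding, $\dim V\leqslant 9$. By Lemma \ref{min-dim-fix}, $V$ has no trivial summand, and every irreducible real summand $W$ has a non-zero $H$-fixed vector. For such a $W$, every irreducible constituent $V(\mu)$ of $W_\BC$ with an $H$-fixed vector satisfies $2\mid\mu_1+\mu_2$ and $\dim V(\mu)\leqslant 9$. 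By the discussion preceding (\ref{rep-list}), $\mu$ is therefore $(2,0)$ or $(k,\pm k)$ with $k\in\{2,3,4\}$.

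\textbf{Step 1: the possible real summands.} We use $\SO_4(\BR)\cong(\SU_2\times\SU_2)/\{\pm(1,1)\}$, realized by unit quaternions acting as $x\mapsto pxq^{-1}$ on $\mathbb{H}=\BR^4$. Under this identification, $V(k,k)$ and $V(k,-k)$ are $\Sym^{2k}\BC^2$ of one of the two $\SU_2$ factors, with the other factor acting trivially. This representation is of real type, so it is the complexification of a real irreducible $W_k^{\pm}$ of dimension $2k+1$. Likewise $V(2,0)=\Sym^2\otimes\Sym^2$ is the complexified isospectral model, of dimension $9$. Each real summand of $V$ therefore has dimension $5$, $7$ or $9$. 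Since $\dim V\leqslant 9$ and no two of these dimensions sum to at most $9$, $V$ is irreducible: either the isospectral model, or one of $W_k^{\pm}$ with $k=2,3,4$.

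\textbf{Step 2: ruling out $W_k^{\pm}$.} Each $W_k^{\pm}$ factors through one of the projections $\SO_4(\BR)\to\SO_3(\BR)$. Its kernel $N$ is the image of one $\SU_2$ factor, namely left (or right) multiplication by unit quaternions. So every vector of $W_k^{\pm}$ has stabilizer containing the $3$-dimensional normal subgroup $N$, and it suffices to show $N\not\subseteq K$. The group $K$ preserves the line $\BR e_4$ (it consists of $\diag(A,\det A)$ with $A\in\RO_3(\BR)$). On the other hand, $N$ acts transitively on the unit sphere $S^3\subset\mathbb{H}$, since $x\mapsto px$ with $p$ ranging over unit quaternions reaches every unit vector. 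Hence $N\not\subseteq K$, and no $W_k^{\pm}$ contains a vector with stabilizer exactly $K$.

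Combining the two steps, $V$ must be the isospectral model. Step 1 is the main point of care: one has to identify $V(k,\pm k)$ correctly with representations pulled back through a quotient by an $\SU_2$ factor, and check that they are of real type, so that no smaller real form or sum of two of them is possible. If the real-type claim needed further justification, it would not affect the argument: a quaternionic-type summand would have real dimension at least $10>9$, so it could not occur in $V$ at all.
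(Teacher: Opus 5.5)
Your proof is correct, but it takes a different route from the paper's.

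\textbf{The paper's route.} The paper only uses that $K=\RS(\RO_3(\BR)\times\RO_1(\BR))$ contains $\SO_3(\BR)$. By the minimality argument of Lemma \ref{min-dim-fix}, applied to $K$ instead of $H$, every irreducible constituent of a minimal $V$ must contain an $\SO_3(\BR)$-fixed vector. Harmonic analysis on $S^3=\SO_4(\BR)/\SO_3(\BR)$ (representations occurring in $L^2(S^3)$) shows that such constituents are exactly the $V(k,0)$, none of which is in the list (\ref{rep-list}). That argument rules out each $V(k,\pm k)$ individually, so it never needs $V$ to be irreducible. It does, however, rely on classifying the $\SO_3$-spherical representations of $\SO_4$.

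\textbf{Your route.} You use $\SO_4(\BR)\cong(\SU_2\times\SU_2)/\{\pm(1,1)\}$ and a dimension count ($5+5>9$) to force $V$ to be irreducible. You then observe that each $W_k^{\pm}$ is trivial on a normal $\SU_2$-factor $N$, and $N$ cannot lie in $K$ because $N$ is transitive on $S^3$ while $K$ preserves $\BR e_4$. This is essentially the paper's argument for the remaining $\SO_4$ cases, with transitivity replacing the ``abelian Lie algebra'' obstruction.

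\textbf{Comparison.} Your approach is more elementary. The same reasoning also covers the other flag manifolds of $\SO_4(\BR)$: a connected $3$-dimensional normal subgroup lies in none of those stabilizers. The price is that irreducibility of $V$ is genuinely needed. For a sum of constituents coming from different factors, the common stabilizer would only contain $N_1\cap N_2$, which is finite. So your explicit dimension count is essential, and it also makes precise the paper's brief ``appear at most once for dimension reason'' in the next lemma.

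\textbf{Minor point.} Your fallback remark should refer to non-real type (complex or quaternionic), not only quaternionic. This is moot, though, since you verified directly that the listed representations are of real type.
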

\begin{proof}
    Let $V$ be an equivariant embedding of $\SO_4(\BR)/\RS(\RO_3(\BR)\times \RO_1(\BR))$ with smallest possible dimension. Assume $V\neq V(2,0)$. Then Lemma \ref{min-dim-fix} implies irreducible sub-representations of $V_\BC$ lie in above list (\ref{rep-list}).
    
    Since $\RS(\RO_3(\BR)\times \RO_1(\BR))$ contains a subgroup $\SO_3(\BR)$, each irreducible sub-representation of $V_\BC$ contains non-zero vector fixed by $\SO_3(\BR)$. We know $\SO_4(\BR)/\SO_3(\BR)\cong S^3$, where $S^3\incl\BR^4$ is the unit sphere. Equivariant embeddings of $S^3$ correspond to representations appearing in $L^2(S^3)$, which is spanned by homogeneous harmonic polynomials. In particular, they appear as sub-representations of symmetric tensor of standard representation $\BR^4$, i.e. in $\Sym^k(\BR^4)$, $k\geqslant 1$. Thus these representations coorespond to $V(k,0)$, $k\geqslant 1$, which does not appear in the list (\ref{rep-list}).
\end{proof}

Now consider equivariant embeddings of remaining cases, i.e. the required stabilizer is one of the following
\[\RS(\RO_2(\BR)\times\RO_2(\BR)),\quad\RS(\RO_2(\BR)\times\RO_1(\BR)\times\RO_1(\BR)),\quad \RS(\RO_1(\BR)\times\RO_1(\BR)\times\RO_1(\BR)\times\RO_1(\BR)).\]
Note that the Lie algebras of them are abelian.

\begin{lem}
    Theorem \ref{orth-flag-min} holds for the remaining cases. 
\end{lem}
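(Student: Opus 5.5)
Let $V$ be an equivariant embedding of smallest possible dimension for one of the three remaining stabilizers. I will assume $V\neq V(2,0)$ and derive a contradiction. Since the isospectral model is an embedding of dimension $9$, we have $\dim V\leqslant 9$.

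First I pin down $V$. By Lemma \ref{min-dim-fix}, $V$ has no trivial summand, and every irreducible summand contains a non-zero $H$-fixed vector. So, exactly as in the previous lemma, every irreducible constituent of $V_\BC$ lies in the list (\ref{rep-list}). I will check that each $V(k,\pm k)$ with $k=2,3,4$ is already defined over $\BR$, so the corresponding real summand of $V$ has real dimension $5$, $7$ or $9$. The cleanest way is the identification
\[\SO_4(\BR)\cong(\SU_2\times\SU_2)/\{\pm(1,1)\}.\]
Under it, $V(k,k)$ and $V(k,-k)$ correspond to $\Sym^{2k}\BC^2$ of one $\SU_2$-factor tensored with the trivial representation of the other. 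These factor through $\SO_3(\BR)$ and are complexifications of real irreducible representations of dimension $2k+1$. Since $5+5>9$, the dimension bound forces $V$ to be a single real irreducible representation whose complexification is one of $V(2,\pm2)$, $V(3,\pm3)$, $V(4,\pm4)$.

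The key observation is that each of these representations factors through one of the two projections
\[\SO_4(\BR)\to\SO_3(\BR)\]
obtained by quotienting by one of the normal $\SU_2$-factors. So the kernel of the representation contains a closed normal subgroup $N\cong\SU_2$ of dimension $3$. Consequently the stabilizer of every vector of $V$ contains $N$ and has dimension at least $3$. On the other hand, the three required stabilizers
\[\RS(\RO_2(\BR)\times\RO_2(\BR)),\quad \RS(\RO_2(\BR)\times\RO_1(\BR)\times\RO_1(\BR)),\quad \RS(\RO_1(\BR)^4)\]
have dimensions $2$, $1$ and $0$ respectively; as noted, their Lie algebras are abelian, whereas $\mathfrak{su}_2$ is not. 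Hence no vector of $V$ has the required stabilizer, contradicting $V$ being an embedding. Therefore $V\cong V(2,0)$, which is the isospectral model.

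The step needing the most care is the identification of $V(k,\pm k)$ with representations trivial on one $\SU_2$-factor. I would verify it through weights. The weight lattice of the torus of $\SO_{4,J}(\BC)$ maps to the pair of $\SU_2$-weights by $(\mu_1,\mu_2)\mapsto(\mu_1+\mu_2,\mu_1-\mu_2)$, up to normalization. Under this map, highest weight $(k,k)$ goes to $(2k,0)$ and $(k,-k)$ goes to $(0,2k)$. This shows that one factor acts trivially. Its dimension $2k+1$ matches the values $5,7,9$ listed above, which serves as a consistency check.
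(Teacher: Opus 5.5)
Your proposal is correct and is essentially the paper's argument. You use Lemma \ref{min-dim-fix} and the bound $\dim V\leqslant 9$ to reduce to a single constituent $V(k,\pm k)$ from the list (\ref{rep-list}); that constituent is trivial on one normal $\SU_2$-factor of $\SO_4(\BR)$, so this factor lies in every stabilizer, contradicting that the required stabilizers have abelian Lie algebras. Your extra dimension count ($3$ versus $2,1,0$) works equally well, and your weight computation $(\mu_1,\mu_2)\mapsto(\mu_1+\mu_2,\mu_1-\mu_2)$ together with the step $5+5>9$ makes explicit what the paper only asserts ``for dimension reason''.
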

\begin{proof}
    Let $V$ be an equivariant embedding with smallest possible dimension. Assume $V\neq V(2,0)$. Then Lemma \ref{min-dim-fix} implies irreducible sub-representations of $V_\BC$ lie in the list (\ref{rep-list}).

    We know $\SO_4(\BR)=\SU_2(\BC)\cdot\SU_2(\BC)$ is product of simple normal subgroups. The representation in the list (\ref{rep-list}) appear at most once for dimension reason. However, each such representation is a representation of one summand $\SU_2(\BC)$ of $\SO_4(\BR)$, hence the other summand $\SU_2(\BC)$ lies in the stabilizer. However in the remaining cases, the required stabilizer has abelian Lie algebra. If $\SU_2(\BC)$ lies in the stabilizer, then its Lie algebra should be abelian, reaching a contradiction.
\end{proof}

Combine above two Lemmas, we get

\begin{cor}\label{SO4-thm}
    Theorem \ref{orth-flag-min} holds for $\SO_4(\BR)$.
\end{cor}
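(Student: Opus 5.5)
The plan is to sort the stabilizers $\RS(\RO_{n_1}(\BR)\times\cdots\times\RO_{n_r}(\BR))\subseteq\SO_4(\BR)$, $n_1+\cdots+n_r=4$, into cases and apply the two preceding lemmas. The partitions with $r\geqslant 2$ are
\[(3,1),\ (1,3),\ (2,2),\ (2,1,1),\ (1,2,1),\ (1,1,2),\ (1,1,1,1).\]
I would deal with the degenerate case $r=1$ separately. There the homogeneous space is a point, its minimal embedding is the trivial representation, and it is not the flag situation the theorem is really about, so I would note it and set it aside.

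First I would show that the order of the blocks does not matter. A permutation matrix $P$ rearranging the standard basis conjugates $\RS(\RO_{n_1}\times\cdots\times\RO_{n_r})$ onto the subgroup for the permuted partition. If $\det P=-1$, replace $P$ by $PD$, where $D=\diag(-1,1,1,1)$ lies in the $\RO_{n_1}$-block. Then $PD$ lies in $\SO_4(\BR)$ and still conjugates one subgroup onto the other.

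Conjugate stabilizers give $\SO_4(\BR)$-isomorphic homogeneous spaces. They are also realized in the same representations: if $v$ has stabilizer $K$, then $gv$ has stabilizer $gKg^{-1}$. Hence the set $\Omega$ of Lemma \ref{min-dim-fix}, and so the minimal embedding dimension, depends only on the conjugacy class of the stabilizer. The isospectral model realizes every case, by taking the orbit of the block-diagonal matrix $A$. So it is enough to treat one representative from each unordered partition.

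For $\{3,1\}$, the lemma on $\SO_4(\BR)/\RS(\RO_3(\BR)\times\RO_1(\BR))$ applies directly. For $\{2,2\}$, $\{2,1,1\}$ and $\{1,1,1,1\}$, the remaining-cases lemma applies. Its only input is that the identity component of each stabilizer is a torus of dimension at most $2$, so the Lie algebra is abelian and cannot contain a copy of $\mathfrak{su}_2$.

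In each case, these lemmas show that a minimal-dimensional equivariant embedding is $V(2,0)$. This is the complexification of $\Sym_4^{tr=0}(\BR)$, i.e.\ the isospectral model. That proves Theorem \ref{orth-flag-min} for $n=4$.

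The step needing the most care is the reduction from ordered to unordered partitions. The point to check is that conjugation stays inside $\SO_4(\BR)$, which the determinant correction by $D$ handles.

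A secondary point is implicit in the remaining-cases lemma and I would state it explicitly. Each real irreducible summand of a minimal embedding complexifies to $V(k,\pm k)$ or to $V(k,k)\oplus V(k,-k)$. In both situations one of the factors $\SU_2$ acts trivially on the relevant summand. So the list (\ref{rep-list}) really does force an $\SU_2$ into the stabilizer, which the abelian Lie algebra of the stabilizer rules out.
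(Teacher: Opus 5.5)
Your proposal is correct and is essentially the paper's proof: the corollary is simply the combination of the $\RS(\RO_3(\BR)\times\RO_1(\BR))$ lemma with the remaining-cases lemma. Your reduction from ordered to unordered partitions (conjugating by $PD\in\SO_4(\BR)$, which works because $D$ normalizes the stabilizer) makes explicit what the paper leaves implicit, and one small correction applies to your aside: on a real summand with complexification $V(k,k)\oplus V(k,-k)$ neither $\SU_2$ factor acts trivially (such summands do not occur anyway, since every irreducible of $\SO_4(\BR)$ is of real type), so what actually forces an $\SU_2$ into the stabilizer is the dimension bound — each member of (\ref{rep-list}) has dimension at least $5$, so a minimal $V$, of dimension at most $9$, is a single irreducible from the list.
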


\subsection{Case of $\SO_{2n+1}$}

Denote
\[S=\frac{1}{\sqrt 2}\begin{pmatrix}I_n&-iI_n\\I_n&iI_n\end{pmatrix},\]
For the matrix 
\[J=\begin{pmatrix} 1&0&0\\0&0&I_n\\0&I_n&0\end{pmatrix},\]
denote
\[\SO_{2n+1,J}(\BC)=\{A\in\GL_{2n+1}(\BC):A^tJA=J,\ \det A=1\}.\]
It is easily verified that
\begin{equation}
    f:\SO_{2n+1}(\BC)\to\SO_{2n+1,J}(\BC):A\mapsto\begin{pmatrix}1&0\\0&S\end{pmatrix}A\begin{pmatrix}1&0\\0&S\end{pmatrix}^{-1}
\end{equation}
is an isomorphism of algebraic groups.

Consider $\SO_{2n+1}$. Assume $n\geqslant 2$. Maximal weights are $\mu=(\mu_1,\dots,\mu_n)$ with $\mu_1\geqslant \mu_2\geqslant \cdots\geqslant \mu_n\geqslant 0$. Then 
\begin{equation}\label{dimension of representation of SO(2n+1)}
\dim V(\mu)=\prod_{i<j}\frac{\mu_i-\mu_j+j-i}{j-i}\prod_{i\leqslant j}\frac{\mu_i+\mu_j+2n+1-i-j}{2n+1-i-j}.\end{equation}

\begin{enumerate}
		\item $\mu=(2,0,\ldots,0)$.
		\begin{equation}\label{case 1}
		    \dim V(\mu)=n(2n+3).
		\end{equation}
		\item $\mu=(\underbrace{1,\ldots,1}_k,0,\ldots,0)$ for $k<n$.
		\begin{equation}\label{case 2}
		    \begin{aligned}
		    \dim V(\mu)&=\prod_{1\leqslant i\leqslant k}\dfrac{n+1-i}{k+1-i}\prod_{1\leqslant i\leqslant k}\dfrac{2n+1-i-k}{n+1-i}\prod_{1\leqslant i\leqslant j\leqslant k}\dfrac{2n+3-i-j}{2n+1-i-j}\\
            &=\prod_{1\leqslant i\leqslant k}\dfrac{2n+1-i-k}{k+1-i}\prod_{1\leqslant i\leqslant j\leqslant k}\dfrac{2n+3-i-j}{2n+1-i-j}.
		\end{aligned}
		\end{equation}

        Specially, when $k=1$, 
        \begin{equation}\label{case 2.1}
            \dim(V(1,0,\dots,0))=2n+1.
        \end{equation}
        when $k=2$, 
        \begin{equation}\label{case 2.2}
            \dim(V(1,1,0,\dots,0))=n(2n+1).
        \end{equation}
        when $k=3$,
        \begin{equation}\label{case 2.3}
            \dim(V(1,1,1,,0,\dots,0))=\dfrac{n(2n+1)(2n-1)}{3}.
        \end{equation}
       \item $\mu=(1,1,\dots,1)$, 
    \begin{equation}\label{case 3}
         \dim V(1,1,\dots,1)=\prod_{1\leqslant i\leqslant j\leqslant n}\dfrac{2n+3-i-j}{2n+1-i-j}.
    \end{equation}
   
\end{enumerate}

\begin{lem}\label{first type1}
    When $k\geqslant 3$, we have
	\[\dim V(\underbrace{1,\ldots,1}_k,0,\ldots,0)>\dim V(2,0,\ldots,0).\]
    
\end{lem}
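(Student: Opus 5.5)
The plan is to identify $\dim V(\underbrace{1,\ldots,1}_k,0,\ldots,0)$ with a binomial coefficient and then compare it with $n(2n+3)$ from (\ref{case 1}). Throughout, $3\leqslant k\leqslant n$. This forces $n\geqslant 3$, since for $n=2$ the statement is vacuous.

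\textbf{Step 1 (closed form).} I will show that
\[\dim V(\underbrace{1,\ldots,1}_k,0,\ldots,0)=\binom{2n+1}{k}\qquad\text{for }1\leqslant k\leqslant n.\]
The conceptual reason is that for $k\leqslant n$ the exterior power $\Lambda^k\BC^{2n+1}$ is an irreducible $\SO_{2n+1}(\BC)$-representation with highest weight $e_1+\cdots+e_k$. This follows from the same reference \cite[Theorem 19.2]{FH91} used in Lemma \ref{even-exception}.

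To stay self-contained, I would instead check it directly from (\ref{case 2}) and (\ref{case 3}). The product $\prod_{1\leqslant i\leqslant j\leqslant k}\frac{2n+3-i-j}{2n+1-i-j}$ telescopes along each fixed value of $i+j$. The remaining factor $\prod_{i\leqslant k}\frac{2n+1-i-k}{k+1-i}$ is a ratio of falling factorials. Together they should collapse to $\frac{(2n+1)(2n)\cdots(2n+2-k)}{k!}$. The cases $k=1,2,3$ in (\ref{case 2.1}), (\ref{case 2.2}) and (\ref{case 2.3}) already agree with $\binom{2n+1}{k}$, which serves as a consistency check. The case $k=n$, formula (\ref{case 3}), needs separate bookkeeping because the first product is absent, but the same telescoping should apply.

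\textbf{Step 2 (monotonicity).} Since $k\leqslant n<\frac{2n+1}{2}$, the binomial coefficients $\binom{2n+1}{k}$ are strictly increasing in $k$ on this range. Hence it suffices to treat $k=3$, which is exactly (\ref{case 2.3}).

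\textbf{Step 3 (comparison).} We need $\frac{n(2n+1)(2n-1)}{3}>n(2n+3)$. This is equivalent to $4n^2-1>6n+9$, that is, $4n^2-6n-10>0$. This holds for all $n\geqslant 3$, since at $n=3$ the left side equals $8$ and it is increasing thereafter.

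The main obstacle is the telescoping verification in Step 1, in particular handling $k=n$ uniformly with $k<n$. If the telescoping becomes messy, the fallback is a ratio argument in the style of Lemma \ref{first type}. I would compute $\dim V(\lambda_{k+1})/\dim V(\lambda_k)$ directly from (\ref{dimension of representation of SO(2n+1)}), using only the factors involving index $k+1$, and expect it to equal $\frac{2n+1-k}{k+1}>1$. Combined with the base case $k=3$ from Step 3, this finishes the proof.
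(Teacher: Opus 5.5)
Your proposal is correct and is essentially the paper's argument. The paper shows $\dim V(\lambda_{k+1})>\dim V(\lambda_k)$ via the ratio $\frac{2n+1-k}{k+1}$ and then makes exactly your comparison $4n^2-6n-10>0$ at $k=3$; your closed form $\binom{2n+1}{k}$ is the integrated version of that ratio, which holds uniformly for $1\leqslant k\leqslant n-1$, and at $k=n-1$ it gives $\frac{n+2}{n}$, so no separate bookkeeping for $k=n$ is needed (the paper writes $\frac{n+1}{n}$ there, a harmless slip). One small correction: the irreducibility of $\Lambda^k\BC^{2n+1}$ for $k\leqslant n$ is the odd-orthogonal statement in \S19.4 of \cite{FH91}, not Theorem 19.2, which treats $\mathfrak{so}_{2n}$.
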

\begin{proof}
	Denote $\mu=(2,0,\ldots,0)$ and $\lambda_k=(\underbrace{1,\ldots,1}_k,0,\ldots,0)$. First we prove that when $k\leqslant n-1$, we have $\dim V(\lambda_{k+1})>\dim V(\lambda_k)$. 

Note that by equation ~\eqref{case 1}, when $k<n-1$, 
    \begin{equation}\label{quotient of V(mu)}
    \begin{aligned}
        \dfrac{\dim(V(\lambda_{k+1}))}{\dim(V(\lambda_k))}&=\dfrac{2(n-k)(2n-1-2k)}{(2n-k)(k+1)}\cdot \dfrac{(2n+1-k)(2n-k)}{2(n-k)(2n-2k-1)}\\
        &=\dfrac{2n+1-k}{k+1}\geqslant \dfrac{k+2}{k+1}>1
    \end{aligned}
    \end{equation}
    And by equations ~\eqref{case 2} and ~\eqref{case 3},
    \[\begin{aligned}
    \dfrac{\dim V(\lambda_n)}{\dim V(\lambda_{n-1})}&=\dfrac{n+1}{n}>1
    \end{aligned}
    \]


Thus to finish the proof of the lemma, we only need to show that for $n\geqslant 3$,
\begin{equation}
    \dim(V(\lambda_3))> \dim(V(\mu))
\end{equation}
This is clear from equations ~\eqref{case 1} and ~\eqref{case 2.3}, since when $n\geq 3$,
\[
\dfrac{n(2n+1)(2n-1)}{3}-n(2n+3)=n\cdot \dfrac{4n^2-6n-10}{3}>0
\]

\end{proof}

\begin{remark}
    We have $\dim V(1,1,0\ldots,0)<\dim V(2,0,\ldots,0)$ and $\dim V(1,0\ldots,0)<\dim V(2,0,\ldots,0)$.
\end{remark}



\begin{lem}\label{second type1}
    Assume that $n\geqslant 2$. Let $\mu^k:=(2,\underbrace{1,\dots,1}_{k-1},0,\dots,0)$ for $1\leqslant k\leqslant n$. Then for $1\leqslant k\leqslant n-1$
    \begin{equation}
        \dim V(\mu^{k+1})>\dim V(\mu^{k})
    \end{equation}
\end{lem}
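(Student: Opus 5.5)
The natural approach mirrors the proof of Lemma~\ref{second type}: write the dimension formula \eqref{dimension of representation of SO(2n+1)} in terms of the shifted coordinates
\[
l_i=\mu_i+n+\tfrac12-i .
\]
Then $\mu_i-\mu_j+j-i=l_i-l_j$, $\mu_i+\mu_j+2n+1-i-j=l_i+l_j$, and each diagonal factor equals $2l_i$. So $\dim V(\mu)$ is, up to a constant independent of $\mu$,
\[
\prod_{i<j}(l_i^2-l_j^2)\prod_i l_i .
\]
Passing from $\mu^k$ to $\mu^{k+1}$ only changes coordinate $k+1$, from $0$ to $1$. Hence $l_{k+1}$ goes from $l=n-k-\tfrac12$ to $l'=n-k+\tfrac12$, and all other $l_j$ stay fixed. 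The ratio $\dim V(\mu^{k+1})/\dim V(\mu^k)$ is therefore
\[
\frac{l'}{l}\prod_{j\neq k+1}\frac{|l'^2-l_j^2|}{|l^2-l_j^2|}.
\]

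I then split this product into three groups and compute each one by telescoping.

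\begin{enumerate}
\item The index $j=1$, where $l_1=n+\tfrac32$. This gives the factor
\[
\frac{(k+1)(2n-k+2)}{(k+2)(2n-k+1)}.
\]
\item The indices $2\le j\le k$, where $l_j=n+\tfrac32-j$. The differences $l_j-l'=k+1-j$ and $l_j-l=k+2-j$ telescope to $1/k$. The sums $l_j+l'=2n+2-j-k$ and $l_j+l=2n+1-j-k$ telescope to $(2n-k)/(2n+1-2k)$.
\item The indices $k+2\le j\le n$, present only when $k\le n-2$, where $l_j=n+\tfrac12-j$. The differences telescope to $n-k$, and the sums telescope to $(2n-2k)/(n+1-k)$.
\end{enumerate}

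Together with the diagonal factor $l'/l=(2n-2k+1)/(2n-2k-1)$, this yields an explicit rational function $R(n,k)$. The statement is equivalent to $R(n,k)>1$ for $1\le k\le n-1$. I will handle the boundary case $k=n-1$ separately, since group (3) is then empty; for $k=1$, group (2) is empty and contributes $1$.

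The main obstacle is this final inequality, because groups (1) and (2) contribute factors less than $1$ (notably the $1/k$). My plan is to pair the terms:
\[
(n-k)\cdot\frac1k\cdot\frac{2n-2k}{n+1-k}
\quad\text{against}\quad
\frac{2n-k}{2n+1-2k}\cdot\frac{2n-2k+1}{2n-2k-1},
\]
clear denominators, and reduce to a polynomial inequality in $n$ and $k$. I would check it first at the extreme $k=n-1$, where it should be tightest. If a clean global bound is elusive, I would fall back on the monotonicity trick used in Lemma~\ref{first type1}: reduce to $k=n-1$ and small $n$, and verify those directly from \eqref{dimension of representation of SO(2n+1)}.
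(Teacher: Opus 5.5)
Your strategy is the paper's: compute $\dim V(\mu^{k+1})/\dim V(\mu^k)$ from the Weyl formula and show it exceeds $1$. Your shifted coordinates, the $j=1$ factor, both group-(2) factors, the group-(3) difference factor $n-k$ and the diagonal factor $l'/l$ are all correct.

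The gap is the group-(3) sum factor. For $k+2\leqslant j\leqslant n$ one has $l'+l_j=2n+1-k-j$ and $l+l_j=2n-k-j$, so
\[
\prod_{j=k+2}^{n}\frac{2n+1-k-j}{2n-k-j}=\prod_{m=n-k}^{2n-2k-2}\frac{m+1}{m}=\frac{2n-2k-1}{n-k},
\]
not $(2n-2k)/(n+1-k)$. Already for $n=4$, $k=2$ the single factor is $(\frac{5}{2}+\frac{1}{2})/(\frac{3}{2}+\frac{1}{2})=\frac{3}{2}$, not $\frac{4}{3}$.

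This error is not harmless. Your value is the true one times $\frac{2(n-k)^2}{(n-k+1)(2n-2k-1)}<1$, and your resulting $R(n,k)$ really does drop below $1$. For $n=k+2$ your factors multiply to $\frac{(k+1)(k+6)}{(k+2)(k+5)}\cdot\frac{8(k+4)}{9k}$, which is $<1$ for every $k\geqslant 32$; at $k=32$ it equals $\frac{627}{629}$. So the polynomial inequality you plan to prove is false, and no pairing of terms can rescue it. Your fallback of reducing to $k=n-1$ also fails, because you have no monotonicity in $k$ to rest on.

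With the correct factor the obstacle disappears, because the group-(3), diagonal and group-(2) sum factors telescope:
\[
(n-k)\cdot\frac{2n-2k-1}{n-k}\cdot\frac{2n-2k+1}{2n-2k-1}\cdot\frac{2n-k}{2n+1-2k}=2n-k.
\]
This gives
\[
\frac{\dim V(\mu^{k+1})}{\dim V(\mu^{k})}=\frac{(k+1)(2n-k)(2n-k+2)}{k(k+2)(2n-k+1)},
\]
which is exactly the paper's expression. It is $>1$ because $\frac{k+1}{k}>1$, $\frac{2n-k+2}{2n-k+1}>1$, and $2n-k\geqslant k+2$ precisely when $k\leqslant n-1$.

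The formula holds uniformly for $1\leqslant k\leqslant n-1$:
\begin{itemize}
\item at $k=1$, group (2) is empty and $\frac{1}{k}\cdot\frac{2n-k}{2n+1-2k}=1$;
\item at $k=n-1$, group (3) is empty and $(n-k)\cdot\frac{2n-2k-1}{n-k}=1$.
\end{itemize}
So no boundary case needs separate treatment. In particular this covers $k=n-1$ with $n\geqslant 3$. The paper's written proof treats only $1<k<n-1$, $k=1$, and $n=2$, so it does not explicitly address that case.
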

\begin{proof}
    By equation ~\eqref{dimension of representation of SO(2n+1)},  we get that for $n>2$ and $1<k<n-1$,
    \[
    \begin{aligned}
        \dfrac{\dim V(\mu^{k+1})}{\dim V(\mu^k)}
        &=\dfrac{k+1}{k+2}\cdot \dfrac{n-k}{k}\cdot \dfrac{2n-2k+1}{2n-2k-1}\cdot \dfrac{2n-2k-1}{n-k}\cdot \dfrac{2n-k}{2n-2k+1} \cdot \dfrac{2n-k+2}{2n-k+1}\\
        &=\dfrac{(k+1)(2n-k)(2n-k+2)}{k(k+2)(2n-k+1)}
    \end{aligned} 
    \]
    Since $k<n-1$, then $2n-k>k+2$, so 
    \[
    \dfrac{\dim V(\mu^{k+1})}{\dim V(\mu^k)}>\dfrac{k+1}{k}\cdot \dfrac{2n-k+2}{2n-k+1}>1
    \]
    Thus the lemma holds for $n>2$ and $1<k<n-1$. For $k=1$, $n>2$, by direct calculation, 
    \[
    \dim V(2,1,0,\dots,0)=\dfrac{(2n+3)(2n+1)(2n-1)}{3}>n(2n+3)=\dim V(2,0,\dots,0)
    \]
    where the last equality is from equation ~\eqref{case 1}.

    At last,  we can directly get that
    \[
    \dim V(2,1)=35>14=\dim V(2,0).
    \]
\end{proof}

\begin{prop}\label{type2-condition}
	Let $\mu$ be a dominant weight and $n\geq 2$. Consider the cases for $SO_{2n+1}(\mathbb{R})$ If $\dim V(\mu)<\dim V(2,0,\ldots,0)$, then $\mu$ is of the following form
	\[0,\quad (1,0,\ldots,0),\quad (1,1,0,\ldots,0).\]
\end{prop}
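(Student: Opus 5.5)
I will follow the strategy of Proposition \ref{type1-condition}. The basic tool is monotonicity of the Weyl dimension formula (\ref{dimension of representation of SO(2n+1)}). If $\lambda\neq 0$ is dominant, then $\dim V(\mu+\lambda)>\dim V(\mu)$, because every factor in the numerator weakly increases and at least one strictly increases. (In type $B$ the factors with $i=j$ are $2\mu_i+2n+1-2i$, and they also increase.) So it suffices to handle a small set of "minimal" shapes of $\mu$. Let $\mu\neq 0$ be dominant with $\dim V(\mu)<\dim V(2,0,\ldots,0)$. I split into cases according to $\mu_1-\mu_2$.

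\textbf{Case $\mu_1-\mu_2\geqslant 2$.} Here $\mu-(2,0,\ldots,0)$ is dominant. So $\dim V(\mu)\geqslant\dim V(2,0,\ldots,0)$, which contradicts the strict inequality.

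\textbf{Case $\mu_1-\mu_2=1$.} Let $k$ be the number of indices with $\mu_i=\mu_2$, taken among indices $\geqslant 2$ (when $\mu_2=0$ we are in the shape $(1,0,\ldots,0)$ plus a constant). Subtract suitable dominant weights. If $\mu_2\geqslant 1$, then $\mu$ dominates $\mu^{k}=(2,1,\ldots,1,0,\ldots,0)$ for some $k\geqslant 2$. Lemma \ref{second type1} then gives $\dim V(\mu)\geqslant\dim V(\mu^k)>\dim V(\mu^1)=\dim V(2,0,\ldots,0)$. If $\mu_2=0$, then $\mu=(1,0,\ldots,0)$, which is on the list.

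More carefully, I will reduce any $\mu$ with $\mu_1-\mu_2=1$ and $\mu_2\geqslant1$ as follows. Write $\mu=\mu^{k}+\lambda$, where $k-1$ is the number of indices $i\geqslant 2$ with $\mu_i\geqslant 1$. The difference $\lambda$ has $\lambda_1=\lambda_2=\cdots=\lambda_k=\mu_2-1$ and $\lambda_i=\mu_i\geqslant 0$ for larger $i$ with $\mu_i\leq\mu_2$. One has to check that $\lambda$ is dominant; this is the only fiddly point. If needed, I will instead subtract the dominant weight $(\mu_2-1)(1,\ldots,1,0,\ldots,0)$ and argue by induction on $\mu_2$.

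\textbf{Case $\mu_1=\mu_2$.} If $\mu_1\geqslant 2$, then $\mu$ dominates $(2,2,0,\ldots,0)$, and this in turn dominates $(2,0,\ldots,0)$; so this is excluded. If $\mu_1=1$, then $\mu=\lambda_k=(1,\ldots,1,0,\ldots,0)$ with $k\geqslant 2$. Lemma \ref{first type1} excludes $k\geqslant 3$. For $n=2$ the remaining weight is $(1,1)$, which is also of the listed form. So $\mu=(1,1,0,\ldots,0)$, as claimed.

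The main obstacle is making the subtraction-of-dominant-weights bookkeeping rigorous in the middle case for general $\mu$. I expect the cleanest route is to subtract $(c,c,\ldots,c,0,\ldots)$-type fundamental combinations repeatedly until $\mu_2\leqslant 1$.
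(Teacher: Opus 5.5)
Your route is the paper's: strict monotonicity of \eqref{dimension of representation of SO(2n+1)} under adding a nonzero dominant weight, a split on $\mu_1-\mu_2$, then Lemmas \ref{second type1} and \ref{first type1}. The cases $\mu_1-\mu_2\geqslant 2$ and $\mu_1-\mu_2=1$ are fine.

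The point you call fiddly is not an obstacle. If $k$ is the number of nonzero entries of $\mu$, then $\mu-\mu^k=(\mu_2-1,\mu_2-1,\mu_3-1,\ldots,\mu_k-1,0,\ldots,0)$ is visibly dominant. Your displayed $\lambda$ actually corresponds to a different $k$, namely one plus the number of $i\geqslant 2$ with $\mu_i=\mu_2$. That choice also gives a dominant difference.

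The genuine gap is the case $\mu_1=\mu_2\geqslant 2$. The only comparison you have proved is $\dim V(\nu+\lambda)>\dim V(\nu)$ for $\lambda\neq 0$ dominant, and under it both of your steps fail:
\begin{itemize}
\item $(2,2,0,\ldots,0)-(2,0,\ldots,0)=(0,2,0,\ldots,0)$ is not dominant;
\item $\mu-(2,2,0,\ldots,0)$ need not be dominant, e.g.\ for $\mu=(2,2,2,0,\ldots,0)$.
\end{itemize}
Reading ``dominates'' in the root-lattice order does not help, because dimension is not monotone there. For $\mathrm{SO}_5$, $(3,0)-(2,1)$ is the positive root $e_1-e_2$, yet $\dim V(3,0)=30<35=\dim V(2,1)$.

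In fact no monotonicity argument can dispose of $(2,2,0,\ldots,0)$. The only dominant $\nu$ with $(2,2,0,\ldots,0)-\nu$ dominant are $0$, $(1,1,0,\ldots,0)$ and $(2,2,0,\ldots,0)$ itself, and the first two have dimension below $n(2n+3)$.

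The repair is as follows. Let $k$ be the number of nonzero entries of $\mu$.
\begin{itemize}
\item If $k\geqslant 3$, subtract $(1,\ldots,1,0,\ldots,0)$ with $k$ ones and apply Lemma \ref{first type1}.
\item If $k=2$, then $\mu=(a,a,0,\ldots,0)$ with $a\geqslant 2$, so $\dim V(\mu)\geqslant \dim V(2,2,0,\ldots,0)$. This last dimension must be computed directly from \eqref{dimension of representation of SO(2n+1)}. It equals $\frac{(n-1)(n+1)(2n+1)(2n+3)}{3}$, which exceeds $n(2n+3)$ for every $n\geqslant 2$ (e.g.\ $35>14$ when $n=2$).
\end{itemize}

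The paper's own Case (ii), which only ``reduces to $(1,\ldots,1,0,\ldots,0)$'', skips exactly this family $(a,a,0,\ldots,0)$ with $a\geqslant 2$. So the explicit computation is missing there as well.
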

\begin{proof}
	If $\lambda$ is dominant and $\lambda\neq 0$, then $\dim V(\mu+\lambda)>\dim V(\mu)$ by equation ~\eqref{dimension of representation of SO(2n+1)} . Thus if $\mu_1-\mu_2\geqslant 2$, we are done. 
	
	\textbf{Case (i)}: $\mu_1-\mu_2=1$. Reduce to $\mu=(2,\underbrace{1,\ldots,1}_k,0,\ldots,0)$, $1\leqslant k\leqslant n-1$. When $n\geqslant 2$, the claim follows from Lemma \ref{second type1}. 
	
	\textbf{Case (ii)}: $\mu_1=\mu_2$. Reduce to $\mu=(\underbrace{1,\ldots,1}_k,0,\ldots,0)$, $1\leqslant k\leqslant n$. When $k\geqslant 3$, the claim follows from Lemma \ref{first type1}.
\end{proof}

Now onsider $SO_3(\mathbb{R})$. The irreducible representations of $SO_3(\BR)$ are $(H_k,\rho_k)$ for $k\geq 1$, which are defined as follows: 
\[
H_k:=\{f\in \BR[x_1,x_2,x_3]|\Delta f=0, f \text{ is homogenous of degree }k\}\bigcup \{0\},
\]
\[
(\rho_k(g)(f))(x):=f(g^{-1}x), g\in SO_3(\mathbb{R}), f\in H_k,
\]
where $\Delta:=\frac{\partial^2 }{\partial x_1^2 }+\frac{\partial^2 }{\partial x_2^2 }+\frac{\partial^2 }{\partial x_3^2 }$ is the usual Laplacian operator on $\mathbb{R}^3$. 
Let $H=\{I_3,D_1:=\diag(1,-1,-1),D_2:=\diag(-1,1,-1),D_3:=\diag(-1,-1,1)\}$ be the subgroup of $SO_3(\mathbb{R})$ and $$P_k:=\{f\in \BR[x_1,x_2,x_3]| f \text{ is homogenous of degree }k\}\bigcup\{0\}$$. We extend the representation $(H_k,\rho_k)$ to $P_k$ in a natural way and denote the extended one also by $\rho_k$. Namely, 
\[
(\rho_k(g)(f))(x):=f(g^{-1}x), g\in SO_3(\mathbb{R}), f\in P_k,
\]

\begin{lem}\label{fixed point of harmonic polynomials}
    If there exists a nonzero polynomial $f\in H_k$ such that $f$ is fixed under $H$, then $k$ is even.
\end{lem}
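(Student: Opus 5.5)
The statement reduces to a monomial-by-monomial parity count, which becomes a degree count once we use that the Klein four-group $H$ contains $D_1$ and $D_2$. The action is $\rho_k(D_i)f(x)=f(D_i^{-1}x)=f(D_ix)$, since $D_i^{-1}=D_i$.

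Each $D_i$ is diagonal, so it maps a monomial $x_1^{a_1}x_2^{a_2}x_3^{a_3}$ to a scalar multiple of itself. Concretely,
\[
\rho_k(D_1)\,x_1^{a_1}x_2^{a_2}x_3^{a_3}=(-1)^{a_2+a_3}x_1^{a_1}x_2^{a_2}x_3^{a_3},
\]
and analogously for $D_2$ and $D_3$. The monomials of degree $k$ therefore form a basis of $P_k$ consisting of simultaneous eigenvectors of $H$.

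Now take a nonzero $f\in H_k\subseteq P_k$ fixed by $H$, and write it in this monomial basis. Since $H$ acts diagonally, each monomial appearing in $f$ with nonzero coefficient is itself fixed by $H$. Pick one such monomial $x_1^{a_1}x_2^{a_2}x_3^{a_3}$ with $a_1+a_2+a_3=k$.

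Invariance under $D_1$ gives $a_2+a_3$ even, and invariance under $D_2$ gives $a_1+a_3$ even. Adding these, $a_1+a_2+2a_3$ is even, so $a_1+a_2$ is even. Combined with $a_2+a_3$ even, this forces $a_1,a_2,a_3$ to all have the same parity. Then $k=a_1+a_2+a_3$ is even exactly when that common parity is even.

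At this point the statement cannot follow from parity alone in general: $x_1x_2x_3$ has odd degree $3$, is fixed by all of $H$, and is harmonic, since every second derivative $\partial_i^2$ kills it. So for $k=3$ the claim as stated appears false. The argument above proves the correct weaker statement, namely that the $H$-fixed monomials are exactly those with all $a_i$ of the same parity.

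To recover something usable, I would check what the paper actually needs downstream: $H$-fixed vectors in $H_k$ with $\dim H_k=2k+1\le 5=\dim V(2)$. This only involves $k\le 2$, where odd $k$ means $k=1$. For $k=1$ the monomials $x_i$ have exponents $(1,0,0)$ and its permutations, which are of mixed parity, so they are not fixed; this matches Lemma \ref{natural representation-no-fix}.

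So my plan is the diagonal-eigenbasis argument above, with the main point being the parity bookkeeping. I would flag that the lemma should be restricted, for instance to $k<3$, or to the parity-of-exponents statement, because $x_1x_2x_3\in H_3$ is a genuine odd-degree $H$-fixed harmonic polynomial.
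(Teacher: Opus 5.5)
Your analysis is correct: the lemma as stated is false, and the error is in the paper's proof, not in your argument. The paper uses the same monomial eigenbasis of $P_k$ that you do. It then asserts that for odd $k$ every exponent vector $(d_1,d_2,d_3)$ has exactly one odd entry. This overlooks the case where all three $d_i$ are odd. Your bookkeeping isolates exactly that case: $a_2+a_3$ and $a_1+a_3$ even forces all $a_i$ to have a common parity.

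Your counterexample is genuine. Every $\partial_i^2$ kills $x_1x_2x_3$, so it lies in $H_3$. Each $D_i$ flips the sign of exactly two variables, so $x_1x_2x_3$ is $H$-invariant. There is also a slip in the paper's proof: it says every monomial of $M_k$ must be fixed, where it should say every monomial occurring in $f$. Your phrasing is the correct one.

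A character computation confirms this. Each $D_i$ is rotation by $\pi$, whose trace on $H_k$ is $\sum_{j=-k}^{k}(-1)^j=(-1)^k$. Hence $\dim H_k^H=\tfrac14\bigl(2k+1+3(-1)^k\bigr)$, which vanishes only for $k=1$. The correct statement is therefore ``$k\neq 1$'', not ``$k$ even''.

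Your proposed repair is the right one. In the $\SO_3(\BR)$ case of Proposition~\ref{orth-flag2}, the bound $2k+1\leqslant 5$ forces $k\leqslant 2$. The only odd case, $k=1$, is the standard representation, handled by Lemma~\ref{natural representation-no-fix} or by your exponent check. So the false lemma does not affect the main theorem, but it should be replaced by the restricted statement.
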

\begin{proof}
    It is clear that $M_k:=\{x_1^{d_1}x_2^{d_2}x_{3}^{d_3}|d_1,d_2,d_3\in\mathbb{N},d_1+d_2+d_3=k\}$ is a bais of $P_k$ and the vectors in $M_k$ is the eigenvectors of $H$.  So if there exists a nonzero polynomial $f\in P_k$ such that $f$ is fixed under $H$, then each monomial in $M_k$ should be fixed by $H$.
    
    If $k$ is odd, then for any $p:=x_1^{d_1}x_2^{d_2}x_{3}^{d_3}\in P_k$, there exists $i\in\{1,2,3\}$ such that $d_i$ is odd and $d_l$ is even for $l\neq i$. This means that for $l\neq i$, $\rho_k(D_l)(p)=-p$. Thus there is no nonzero fixed polynomial in $P_k$. Since $(H_k,\rho_k)$ is a subrepresentation of $(P_k,\rho_k)$, then there doesn't exist a nonzero polynomial $f\in H_k$ such that $f$ is fixed under $H$, and thus the lemma holds.
\end{proof}

\begin{lem}\label{adjoint representation as harmonic polynomials}
     $(H_2,\rho_2)$ is isomorphic to the isospectral model for $SO_{3}(\mathbb{R})$.
\end{lem}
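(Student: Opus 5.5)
We will construct an explicit $\SO_3(\BR)$-equivariant linear isomorphism between $H_2$ and the traceless isospectral model $\Sym_3^{tr=0}(\BR)$. The natural candidate is the map sending a symmetric matrix to its quadratic form,
\[\Phi:\Sym_3^{tr=0}(\BR)\to P_2,\qquad \Phi(M)(x)=x^tMx .\]
Our plan is to check three things in order: $\Phi$ lands in $H_2$, it is an isomorphism onto $H_2$, and it intertwines the two actions.

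\textbf{Step 1 (image is harmonic).} For $f(x)=x^tMx$ with $M$ symmetric, we have $\partial_i\partial_i f=2M_{ii}$. Hence $\Delta f=2\,\tr M$, which vanishes exactly when $M$ is traceless. So $\Phi$ maps $\Sym_3^{tr=0}(\BR)$ into $H_2$.

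\textbf{Step 2 (isomorphism).} Every quadratic form $f\in P_2$ is $x^tMx$ for a unique symmetric $M$ (polarization), so $\Phi$ is injective on all of $\Sym_3(\BR)$. By Step 1, the preimage of $H_2$ under this bijection $\Sym_3(\BR)\to P_2$ is precisely $\Sym_3^{tr=0}(\BR)$. Thus $\Phi$ is onto $H_2$. As a consistency check, both spaces have dimension $6-1=5$.

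\textbf{Step 3 (equivariance).} For $g\in\SO_3(\BR)$ we compute
\[(\rho_2(g)\Phi(M))(x)=\Phi(M)(g^{-1}x)=x^t(g^{-1})^tMg^{-1}x=x^t\,gMg^{t}\,x,\]
using $g^{-1}=g^t$. This equals $\Phi(g\cdot M)(x)$, where $g\cdot M=gMg^t$ is the isospectral action. Hence $\Phi$ is an isomorphism of representations.

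No step is a genuine obstacle. The only point needing care is matching conventions: $\rho_2$ uses $g^{-1}$, and this is absorbed exactly by orthogonality. If one instead means the full isospectral model $\Sym_3(\BR)$, it corresponds to all of $P_2=H_2\oplus\BR\,|x|^2$, with the trivial summand spanned by $\Phi(I)=|x|^2$. In the traceless sense the identification with $H_2$ is exact.
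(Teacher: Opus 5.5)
Your proof is correct and is essentially the paper's argument run in the opposite direction. The paper defines $\phi\colon H_2\to\Sym_3^{tr=0}(\BR)$ by sending a harmonic quadratic to its coefficient matrix, uses $\Delta f=2\tr$ to see that the image is traceless, and calls equivariance ``easy to see'' from $f=x^t\phi(f)x$; your explicit computation via $g^{-1}=g^t$ fills in that step, and writing the map as $M\mapsto x^tMx$ also avoids the factor-of-$2$ slip in the paper's off-diagonal coefficients.
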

\begin{proof}
    Note that $$P_2=\{a_{1}x_1^2+a_2x_2^2+a_3x_3^2+a_4x_1x_2+a_5x_1x_3+a_6x_2x_3|a_i\in \BR\}.$$
    So for $f=a_{1}x_1^2+a_2x_2^2+a_3x_3^2+2a_4x_1x_2+2a_5x_1x_3+2a_6x_2x_3\in P_2$, then $\Delta f=2a_1+2a_2+2a_3$. Thus 
    \begin{equation}\label{structure of H_2}
        H_2=\{a_{1}x_1^2+a_2x_2^2+a_3x_3^2+a_4x_1x_2+a_5x_1x_3+a_6x_2x_3|a_i\in \BR,a_1+a_2+a_3=0\}.
    \end{equation}
    Let $\phi: H_2\longrightarrow \Sym_0(3,\mathbb{R})$ defined by $$\phi(a_{1}x_1^2+a_2x_2^2+a_3x_3^2+a_4x_1x_2+a_5x_1x_3+a_6x_2x_3)=\begin{pmatrix}
        a_1&a_4&a_5\\
        a_4&a_2&a_6\\
        a_5&a_6&a_3
    \end{pmatrix}.$$
    This well-defined since equation ~\eqref{structure of H_2} holds. 
    Note that for every $f\in H_2$, $f=\begin{pmatrix}
        x_1&x_2&x_3
    \end{pmatrix}\phi(f)\begin{pmatrix}
        x_1\\x_2\\x_3
    \end{pmatrix}.$
    Then by defintion, it's easy to see that $\phi$ is a $G-$isomorphism between $H_2$ and the isospectral model.
\end{proof}
\begin{proof}[Proof of Proposition \ref{orth-flag2} for $\SO_{2n+1}(\BR)$, $n\geqslant 1$]

 For $n=1$, Lemma ~\ref{fixed point of harmonic polynomials} and Lemma~\ref{adjoint representation as harmonic polynomials} implies the proposition.

 Now let $n\geqslant 2$ and $V$ be an irreducible real representation of $\SO_{2n+1}(\BR)$ such that $\dim V$ is no greater than isospectral model and $H$ fixes some non-zero vector in $V$. If $V(\mu)\subseteq V_\BC$ is an irreducible summand, then $H$ fixes some non-zero vector in $V(\mu)$ as well. By above discussion, in particular Proposition \ref{type2-condition} implies $\mu$ is of the form $0$, $(1,1,0,\ldots,0)$, $(1,0,\dots,0)$.

 The first one is excluded by Lemma \ref{natural representation-no-fix} since $V(1,0,\dots,0)$ is just the natuaral represention of $SO_{2n+1}(\BR)$. The second is ruled out by the lemma \ref{adj-no-fix} since $V(1,1,\dots,0)$ is just the adjoint representation of $SO_{2n+1}(\BR)$
\end{proof}



\section{Equivariant embedding of complex flag manifold}
The definition ~\ref{real flag} of real flag manifold can be natuarally extended to the complex case, that is, replacing all the real vector spaces by the complex one. In this section, we always assume that $V$ is a complex vector space and denote the corresponding complex flag manifold of type $(k_1,\dots,k_r)$ by $\CFlag(k_1,\dots,k_r,V)$

\begin{lem}\label{lemma: stablizer of complex flag manifold1}
	Fix a Hermitian inner product on $V$. Then the stabilizer in $U(V)$ of a flag $W_1\subset\cdots\subset W_r$ in $\CFlag(k_1,\ldots,k_r,V)$ is $\RU(U_1)\times\cdots\times\RU(U_{r+1})$. Here $U_i$ is the orthogonal complement of $W_{i-1}$ in $W_i$, and we identify $W_0=0$, $W_{r+1}=V$.
	
	If we take stabilizer in $\SU(V)$, then we get $\RS(\RU(U_1)\times\cdots\times\RU(U_{r+1}))$. Here the outer $S$ means the total determinant is $1$.
\end{lem}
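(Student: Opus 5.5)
We prove the two inclusions separately, using the orthogonal decomposition $V=U_1\oplus\cdots\oplus U_{r+1}$. Here $\dim U_i=k_i-k_{i-1}$ (with $k_0=0$, $k_{r+1}=n$), and $W_i=U_1\oplus\cdots\oplus U_i$ for every $i$.

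\emph{The product fixes the flag.} Take $g=(g_1,\ldots,g_{r+1})$ with $g_i\in\RU(U_i)$, acting block-diagonally on $V$. Then $g$ is unitary, since the $U_i$ are mutually orthogonal and each $g_i$ preserves the inner product on $U_i$. Moreover $g(W_i)=g_1(U_1)\oplus\cdots\oplus g_i(U_i)=W_i$, so $g$ fixes the flag.

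\emph{The stabilizer is contained in the product.} Let $g\in\RU(V)$ satisfy $g(W_i)=W_i$ for all $i$; this also holds for $i=0$ and $i=r+1$. Because $g$ is unitary and preserves $W_{i-1}$, it also preserves the orthogonal complement $W_{i-1}^\perp$. Hence $g$ preserves
\[W_i\cap W_{i-1}^\perp=U_i.\]
Since $V$ is the direct sum of the $U_i$, the map $g$ is determined by its restrictions $g|_{U_i}$, and each restriction is unitary. Therefore $g$ lies in $\RU(U_1)\times\cdots\times\RU(U_{r+1})$. This gives the first claim.

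\emph{The $\SU(V)$ case.} The stabilizer in $\SU(V)$ is the intersection of the stabilizer in $\RU(V)$ with $\SU(V)$. For a block-diagonal $g$ we have $\det g=\prod_i\det g_i$. So this intersection is exactly the set of tuples $(g_1,\ldots,g_{r+1})$ with total determinant $1$, namely $\RS(\RU(U_1)\times\cdots\times\RU(U_{r+1}))$.

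The only step needing care is the use of unitarity to pass from invariance of $W_{i-1}$ to invariance of $W_{i-1}^\perp$. For a unitary $g$, $\langle gx,w\rangle=\langle x,g^{-1}w\rangle$, and $g^{-1}w\in W_{i-1}$ whenever $w\in W_{i-1}$ (since $g$ is invertible and preserves $W_{i-1}$). So $x\perp W_{i-1}$ implies $gx\perp W_{i-1}$.

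The statement also implies transitivity of $\SU(V)$ on $\CFlag(k_1,\ldots,k_r,V)$. Given two flags, choose orthonormal bases adapted to each via Gram--Schmidt, and take the unitary map sending one basis to the other. Multiplying one basis vector by a phase makes the determinant $1$, which gives an element of $\SU(V)$ carrying the first flag to the second.
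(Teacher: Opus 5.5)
Your proof is correct and complete. The paper does not prove this lemma; it states it as a known fact, as it also does for the real case in the introduction. Your argument is the standard verification one would supply: the orthogonal decomposition $V=U_1\oplus\cdots\oplus U_{r+1}$, then unitarity to pass from $g(W_{i-1})=W_{i-1}$ to $g(W_{i-1}^\perp)=W_{i-1}^\perp$ and hence to $g(U_i)=U_i$ with $U_i=W_i\cap W_{i-1}^\perp$, and finally $\det g=\prod_i\det g_i$ for the $\SU(V)$ case.

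One wording point only. The stabilizer computation does not by itself imply transitivity of $\SU(V)$. Your Gram--Schmidt argument, with the phase adjustment (which keeps the bases adapted), is an independent and correct proof of that separate fact. That fact is what the paper needs to identify $\CFlag$ with $\SU_n/\RS(\RU_{n_1}\times\cdots\times\RU_{n_r})$.
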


We shall prove the following

\noindent\textbf{Theorem \ref{uni-min-equi}.} {\it
    The equivariant embedding of $\SU_n/\RS(\RU_{n_1}\times\cdots\times\RU_{n_r})$, $n_1+\cdots+n_r=n$, $n_i\geqslant 1$, $n\geqslant 2$ with minimal dimension is isospectral model.}

Denote $H\subseteq\SU_n$ the subgroup of diagonal matrices. It is clear that for any partition $(n_1,\ldots,n_r)$ of $n$, $H\subseteq\RS(\RU_{n_1}\times\cdots\times\RU_{n_r})$. Similar to Lemma \ref{min-dim-fix}, if $V$ reaches minimal dimension, then each irreducible summand of $V_\BC$ contains a non-zero $H$-fixed vector. We shall prove the following Proposition.

\begin{prop}\label{H-fix-min}
    Let $V$ be an irreducible complex representation of $\RU_n$. If $\dim V$ is no greater than isospectral model and contains non-zero $H$-fixed vector, then $V$ is either trivial representation or isopsectral model.
\end{prop}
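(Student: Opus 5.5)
Throughout, $H$ is the diagonal torus. The proposition is stated for $\RU_n$, but what is used afterwards is its $\SU_n$ version, and $\SU_n$ acts irreducibly on any irreducible $\RU_n$-module. So I work with irreducible complex representations of $\SU_n$, i.e. of $\SL_n(\BC)$. These are indexed by dominant weights $\lambda=(\lambda_1\geqslant\cdots\geqslant\lambda_{n-1}\geqslant\lambda_n=0)$, and
\[\dim V(\lambda)=\prod_{i<j}\frac{\lambda_i-\lambda_j+j-i}{j-i}.\]
Since $H$ is a maximal torus, a nonzero $H$-fixed vector is the same as a nonzero zero-weight vector. The weights of $V(\lambda)$ are congruent to $\lambda$ modulo the root lattice, so a zero weight occurs if and only if $\lambda$ lies in the root lattice, i.e. $n\mid|\lambda|=\sum\lambda_i$. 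By saturation (zero is dominant and lies below $\lambda$), this condition is also sufficient. The task therefore reduces to a combinatorial one. Let $\lambda\neq 0$ with $n\mid|\lambda|$ and $\dim V(\lambda)\leqslant n^2-1=\dim V(1,0,\ldots,0,-1)$. I must show that $\lambda$ is the adjoint weight $(2,1,\ldots,1,0)$, which is $\mathrm{Her}_n^{tr=0}(\BR)\otimes\BC$.

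I would use the same monotonicity principle as in Propositions \ref{type1-condition} and \ref{type2-condition}: each factor of the Weyl formula is nondecreasing in the differences $\lambda_i-\lambda_j$, so $\dim V(\mu+\nu)>\dim V(\mu)$ for dominant $\nu\neq0$. Write $\lambda=\sum a_k\omega_k$ in fundamental weights, with $\dim V(\omega_k)=\binom nk$. Three reductions follow.
\begin{itemize}
\item[(i)] If $\lambda$ contains $\omega_1+\omega_{n-1}$, then $\lambda$ equals it, by strict monotonicity.
\item[(ii)] If $\lambda$ contains $\omega_i+\omega_j$ with $2\leqslant i\leqslant j\leqslant n-2$ (allowing $i=j$), I need $\dim V(\omega_i+\omega_j)>n^2-1$. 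This can be checked from the hook-content formula: for instance $\dim V(\omega_2+\omega_{n-2})=\binom n2^2-n^2$ and $\dim V(2\omega_2)=n^2(n^2-1)/12$. The general case should follow from an argument that increasing $i$ toward the middle increases the dimension, in the spirit of Lemma \ref{first type}.
\item[(iii)] If $\lambda$ contains $\omega_1+\omega_j$ with $j\leqslant n-2$, or $\omega_{n-1}+\omega_j$ with $j\geqslant 2$, then $\dim\geqslant\dim V(\omega_1+\omega_2)=n(n^2-1)/3>n^2-1$ for $n\geqslant4$.
\end{itemize}

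What remains are weights with support in $\{1,n-1\}$ and no mixed term, i.e. $a\omega_1$ or $a\omega_{n-1}$, together with single fundamental weights $\omega_k$ with $2\leqslant k\leqslant n-2$. The weight $\omega_k$ has $|\lambda|=k$, which is not divisible by $n$, so it is excluded. For $a\omega_1$ the dimension is $\binom{n+a-1}{a}$, and divisibility forces $a\geqslant n$. Then $\binom{2n-1}{n}>n^2-1$ for $n\geqslant3$, and $a\omega_{n-1}$ is the dual case. Small $n$ are handled by hand. For $n=2$ the representations are $\Sym^a\BC^2$ with $a$ even, and dimension at most $3$ forces $a\in\{0,2\}$. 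For $n=3$ the list of dimensions $\leqslant 8$ is $1,3,3,6,6,8$, and among these only $1$ and $8$ have $3\mid|\lambda|$. Finally, $\mathrm{Her}_n^{tr=0}(\BR)\otimes\BC\cong\mathfrak{sl}_n(\BC)$, so the only surviving nontrivial case is the isospectral model.

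The main obstacle I expect is step (ii) for general $i,j$. This is a uniform lower bound on $\dim V(\omega_i+\omega_j)$, where I would try to use monotonicity in $i$ and $j$ by comparing ratios of Weyl products as in Lemma \ref{second type}. A cruder fallback is that $V(\omega_i+\omega_j)$ contains the lowest-degree constituent of $\Lambda^i\otimes\Lambda^j$, and its dimension is at least $\binom ni$, which already exceeds $n^2-1$ when $3\leqslant i\leqslant n-3$ and $n\geqslant 8$. That leaves only the cases $i=2$ or $j=n-2$ and small $n$ to be checked by the explicit formulas above.
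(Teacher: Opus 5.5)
Your reduction is sound and takes a different route from the paper's, but two steps need repair before it is a complete proof.

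\textbf{Comparison of routes.} The paper keeps the partition coordinates $\mu=(\mu_1,\ldots,\mu_{n-1})$ and sorts by the number of distinct values (``stairs''). For each shape it proves one explicit lower bound by replacing every gap $\mu_i-\mu_j$ and every $\mu_i$ by its least possible value (Lemmas \ref{uni-ineq1}--\ref{uni-ineq4}). The condition $n\mid\sum\mu_i$ of Lemma \ref{n-div} enters only in the shapes $(a,\ldots,a,0,\ldots,0)$ and $(a,\ldots,a)$, to push $a$ up. You instead use fundamental-weight coordinates and strict monotonicity to reduce to dominance-minimal weights: the sums $\omega_i+\omega_j$ and the families $a\omega_1$, $a\omega_{n-1}$, $\omega_k$. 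The root-lattice condition then kills $\omega_k$ and forces $a\geqslant n$. (Passing from $\RU_n$ to $\SU_n$, as you do, is also what the paper's proof tacitly does.) Your route is the more conceptual one and shows exactly which small weights compete with the adjoint. The paper's shape-by-shape estimates give bounds uniform in $n$ without knowing $\dim V(\omega_i+\omega_j)$ pair by pair, and that is precisely what your step (ii) is missing.

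\textbf{Repair in (iii).} As written, the ranges $j\leqslant n-2$ and $j\geqslant 2$ include $2\omega_1$ and $2\omega_{n-1}$. Their dimension $\binom{n+1}{2}$ is below $n^2-1$ for $n\geqslant 3$, so the stated bound is false there.
\begin{itemize}
\item Restrict to $2\leqslant j\leqslant n-2$. Exhaustiveness is unaffected, since you treat $a\omega_1$ and $a\omega_{n-1}$ separately.
\item Justify $\dim V(\omega_1+\omega_j)\geqslant\dim V(\omega_1+\omega_2)$ via $\dim V(\omega_1+\omega_j)=j\binom{n+1}{j+1}$. This is unimodal in $j$, and its smaller endpoint value is at $j=2$ once $n\geqslant 4$.
\end{itemize}

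\textbf{Repair in (ii).} You leave this step as a plan, and the fallback threshold is off by one: $\binom{8}{3}=56<63$. So the monotonicity bound $\dim V(\omega_i+\omega_j)>\max\{\binom{n}{i},\binom{n}{j}\}$ disposes of pairs with an index in $[3,n-3]$ only for $n\geqslant 9$. Your formulas for $2\omega_2$ and $\omega_2+\omega_{n-2}$, plus duality, handle the corner pairs. That still leaves $4\leqslant n\leqslant 8$ as a finite check you have not carried out.

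The cleanest fix is to import the paper's product estimates:
\begin{itemize}
\item For $i<j$, the weight $\omega_i+\omega_j$ is $\mu=(2,\ldots,2,1,\ldots,1,0,\ldots,0)$ with $i$ twos and $j-i$ ones. This is Lemma \ref{uni-ineq1} with $u=i$, $v=j-i$, $u+v=j\leqslant n-2$.
\item The weight $2\omega_i$ is Lemma \ref{uni-ineq3} with $u=i$, $a=2$.
\end{itemize}
These settle (ii) for every $n\geqslant 4$. Lemma \ref{uni-ineq1} with $u=1$ also covers the $\omega_1+\omega_j$ half of (iii).
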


It is easy to deduce Theorem \ref{uni-min-equi} from above Proposition.

\subsection{Computation of dimension}

It is known that $\SU_n\incl\SL_n(\BC)$ is the complexification, hence every complex representation of $\SU_n$ extends to a complex representation of $\SL_n(\BC)$. Take maximal torus $T$ of $\SL_n(\BC)$ to be diagonal matrices. Denote $e_i\in\BR^n$ standard basis vector with $1$ at $i$-th coordinate. We view $e_i$ as character on diagonal matrices that is only non-trivial at $(i,i)$-entry. The characters of $T$ then lie in the subspace $E=\{(x_1,\ldots,x_n)\mid \sum x_i=0\}$. The roots are $\Phi=\{e_i-e_j\mid i\neq j\}$. Choose positive roots to be $\Phi^+=\{e_i-e_j\mid i<j\}$. For each $\mu=(\mu_1,\ldots,\mu_{n-1})\in\BZ^{n-1}$, the vector
\[\Big(\mu_1-\frac{\sum\mu_i}{n},\ldots,\mu_{n-1}-\frac{\sum\mu_i}{n},-\frac{\sum\mu_i}{n}\Big)\]
is a weight, and it is dominant if and only if $\mu_1\geqslant\mu_2\geqslant\cdots\geqslant\mu_{n-1}\geqslant 0$. Conversely, all weights are of above form. We shall write $\mu\in\BZ^{n-1}$ to mean a weight identified as above. It is clear $\mu$ acts on $T$ via
\[\mu(\diag(a_1,\ldots,a_n))=\prod_{i=1}^{n-1}a_i^{\mu_i}.\]

By Weyl character formula, denote $V(\mu)$, $\mu=(\mu_1,\ldots,\mu_{n-1})$ the irreducible highest weight $\lambda$ representation, then
\[\dim V(\mu)=\frac{\prod_{i<j<n}(\mu_i-\mu_j+j-i)\prod_{i<n}(\mu_i+n-i)}{\prod_{i<j}(j-i)}\]

The isospectral model corresponds to $\mu=(2,1,\ldots,1)$, which has dimension $(n+1)(n-1)$.

Recall $H=T\cap\SU_n$.

\begin{lem}\label{n-div}
    If $V(\mu)$ contains a non-zero $H$-fixed vector, then $n\mid\sum\mu_i$.
\end{lem}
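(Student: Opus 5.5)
The argument copies the parity lemma for $\SO_{2n,J}(\BC)$ proved earlier, with the group $\{\pm1\}\subseteq f(H)\cap T$ replaced by the centre $\mu_n=\{\zeta I_n:\zeta^n=1\}$ of $\SU_n$. First, I will decompose a non-zero $H$-fixed vector $v\in V(\mu)$ into weight spaces. Since $H=T\cap\SU_n$ is contained in the torus, each weight component of $v$ is again $H$-fixed. So some weight $\mu'$ of $V(\mu)$ occurs with a non-zero $H$-fixed vector, and therefore $\mu'$ is trivial on $H$.

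Next, I will evaluate $\mu'$ on the central element $\zeta I_n\in H$, where $\zeta=e^{2\pi i/n}$. Writing $\mu'$ in the paper's coordinates $(\mu'_1,\ldots,\mu'_{n-1})\in\BZ^{n-1}$, the character formula $\mu'(\diag(a_1,\ldots,a_n))=\prod_{i<n}a_i^{\mu'_i}$ gives
\[\mu'(\zeta I_n)=\zeta^{\sum_i\mu'_i}=1,\]
so $n\mid\sum_i\mu'_i$.

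Finally, I have to pass from $\mu'$ to $\mu$. The difference $\mu-\mu'$ is a non-negative integer combination of positive roots $e_i-e_j$. In the $\BZ^{n-1}$ coordinates, the root $e_i-e_j$ with $i<j<n$ has coordinate sum $0$, and the root $e_i-e_n$ has coordinates $e_i+\sum_{k<n}e_k$ (because $-e_n=\sum_{k<n}e_k$ modulo the trace), so its coordinate sum is $1+(n-1)=n$. Hence every root has coordinate sum divisible by $n$. It follows that $\sum\mu_i\equiv\sum\mu'_i\equiv 0\pmod n$.

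A cleaner alternative avoids coordinates: the centre acts on the irreducible $V(\mu)$ by the scalar $\zeta^{\sum\mu_i}$, by Schur's lemma or simply by evaluating the highest weight. Since $\zeta I_n\in H$ fixes $v\neq0$, this scalar is $1$. I would present this version, and the only point that needs care is the bookkeeping of the $\BZ^{n-1}$ coordinates, which is trivial.
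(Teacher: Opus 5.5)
Your proof is correct, and the version you say you would present takes a different route from the paper's. The paper uses that $H$ is the entire compact maximal torus of $\SU_n$. A weight of $V(\mu)$ on which $H$ acts trivially must then be the zero weight, so $\mu$ lies in the root lattice. The conclusion follows from the fact, which the paper simply asserts, that every root has coordinate sum divisible by $n$.

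Your first version has the same weight-decomposition-plus-root-lattice structure, but it exploits only the central element $\zeta I_n\in H$. That is why you need the extra step passing from $\mu'$ to $\mu$. Your check that $e_i-e_n$ has coordinate sum $1+(n-1)=n$ supplies the detail the paper leaves implicit.

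The Schur's lemma version is shorter and avoids the $\BZ^{n-1}$ coordinates altogether. It proves $n\mid\sum\mu_i$ under the weaker hypothesis that only the centre $\{\zeta I_n:\zeta^n=1\}$ fixes a non-zero vector, i.e.\ whenever $V(\mu)$ factors through $\SU_n$ modulo its centre. The paper's route gives in addition the intermediate fact that $0$ is a weight of $V(\mu)$. For $\SL_n(\BC)$ these conditions are all equivalent, so nothing is lost for the application in Proposition \ref{H-fix-min}.
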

\begin{proof}
    If $V(\mu)$ contains a non-zero $H$-fixed vector, then it contains a weight $\lambda$ that is trivial on $H$. Thus $\lambda=0$. Since each root $\alpha=(\alpha_1,\ldots,\alpha_{n-1})$ satisfies $n\mid\sum\alpha_i$, and $\mu-\lambda$ is $\BZ$-linear combintations of roots, the claim follows.
\end{proof}

\begin{lem}\label{uni-ineq1}
    Suppose $\mu=(\underbrace{a,\ldots,a}_u,\underbrace{b,\ldots,b}_v,\mu_{u+v+1},\ldots,\mu_{n-1})$ with $u,v\geqslant 1$, $u+v\leqslant n-2$, $a>b>\mu_{u+v+1}$. Then
    \[\dim V(\mu)>\dim V(2,1,\ldots,1).\]
\end{lem}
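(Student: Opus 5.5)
We will reduce to a minimal configuration using the monotonicity of the Weyl dimension formula, and then compare that configuration with $(2,1,\ldots,1)$ by an explicit computation. The key point is that $\dim V(\mu)$ is a product of factors $(\mu_i-\mu_j+j-i)/(j-i)$ over $1\leqslant i<j\leqslant n$, where we set $\mu_n=0$. Each factor is non-decreasing when $\mu_i-\mu_j$ grows and is at least $1$.

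\textbf{Step 1 (reduction).} Write $\mu=\nu+\lambda$, where
\[\nu=(\underbrace{2,\ldots,2}_u,\underbrace{1,\ldots,1}_v,0,\ldots,0).\]
We claim $\lambda=\mu-\nu$ is dominant. It is weakly decreasing within each block, and at the two breaks:
\begin{itemize}
\item at position $u$, $\lambda_u-\lambda_{u+1}=a-b-1\geqslant 0$;
\item at position $u+v$, $\lambda_{u+v}-\lambda_{u+v+1}=b-\mu_{u+v+1}-1\geqslant 0$.
\end{itemize}
Also $\lambda_{n-1}\geqslant 0$, since $\mu_{n-1}\geqslant 0$. So every difference $\mu_i-\mu_j$ is at least $\nu_i-\nu_j$, and hence $\dim V(\mu)\geqslant\dim V(\nu)$. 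It therefore suffices to show
\[\dim V(\nu)>(n+1)(n-1)\]
for all $u,v\geqslant1$ with $u+v\leqslant n-2$.

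\textbf{Step 2 (closed form for $\nu$).} Let $w=n-u-v\geqslant 2$ be the number of zero coordinates, with $\mu_n=0$ included. The nontrivial factors come from three pairs of blocks:
\begin{itemize}
\item pairs in blocks (1,2), with difference $1$: product $\binom{u+v}{u}$;
\item pairs in blocks (2,3), with difference $1$: product $\binom{v+w}{v}$;
\item pairs in blocks (1,3), with difference $2$: product $\prod_{i\leqslant u<v+u<j}\frac{j-i+2}{j-i}$.
\end{itemize}
The last product telescopes to
\[\frac{(u+v+w)(u+v+w+1)\,(v)(v+1)}{(u+v)(u+v+1)\,(v+w)(v+w+1)}\cdot\text{(corrections)}.\]
The precise form will be worked out directly. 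This gives $\dim V(\nu)$ as the dimension of the $\GL_n$-module with these shapes; equivalently, it is the dimension of the Schur module $\mathbb{S}_{(2^u,1^v)}\BC^n$, which has a hook-content formula.

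\textbf{Step 3 (the comparison).} We use the hook-content formula
\[\dim=\prod_{\text{boxes}}\frac{n+c}{h}\]
for the shape with $u$ rows of length $2$ and $v$ rows of length $1$. We will compare this to the shape $(2,1^{n-2})$, which has dimension $n^2-1$, by tracking how the dimension changes under the moves:
\begin{itemize}
\item transferring a box from the first column to the second column;
\item removing a box from the bottom of the first column.
\end{itemize}
The goal is to show the dimension is minimized at $u=v=1$, i.e. at $\nu=(2,1,0,\ldots,0)$, whose dimension is
\[\frac{n(n+1)(n-1)}{3}.\]
This exceeds $(n+1)(n-1)$ when $n>3$; since $u+v\leqslant n-2$ forces $n\geqslant4$, this case is fine.

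For general $u,v$, our first approach is to show that $\dim V(\nu)$ increases in $v$ for fixed $u$, as long as $u+v\leqslant n-2$. We will do this via the ratio of consecutive Weyl products, as in Lemma \ref{second type}, where the ratio becomes an explicit rational function that we check exceeds $1$. We then show the analogous monotonicity in $u$. The case $u+v=n-2$ with $v$ large needs care, since that is where the shape approaches a near-full column and the dimension shrinks. Alternatively, we can apply the symmetry $\mu\mapsto$ dual representation, which has the same dimension, to move toward the smaller end.

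\textbf{Main obstacle.} We expect the hardest part to be this monotonicity near the boundary $u+v=n-2$. There the consecutive ratio can fall below $1$, just as in Lemma \ref{first type}, so we will handle the endpoint separately. Specifically, we compute $\dim V(2^u,1^{n-2-u})$ explicitly and verify directly that it exceeds $n^2-1$ for $1\leqslant u\leqslant n-3$.
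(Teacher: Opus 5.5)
Your Step 1 is correct and matches what the paper does: its lower bound replaces each Weyl factor by its value at $\nu=(2^u,1^v,0,\ldots,0)$. The genuine gap is in Step 3. Step 2 is also never actually carried out; the telescoped expression with \emph{corrections} is not right as written, although the hook-content formula gives the closed form at once.

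The monotonicity you plan to prove is false, and not only at the last step before the boundary $u+v=n-2$. Write $D(u,v)=\dim V(2^u,1^v,0,\ldots,0)$. Then
\[\frac{D(u,v+1)}{D(u,v)}=\frac{(v+2)(n-u-v)}{(v+1)(u+v+2)},\qquad \frac{D(u+1,v)}{D(u,v)}=\frac{(n-u+1)(n-u-v)}{(u+1)(u+v+2)},\]
and both ratios fall below $1$ over a whole stretch before the boundary is reached. For example, with $n=10$ and $u=1$, the values $D(1,v)$ for $v=1,\ldots,7$ are $330,990,1848,2310,1980,1155,440$.

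So \emph{increasing in $v$ and in $u$, with the line $u+v=n-2$ computed separately} does not reach points such as $(u,v)=(1,5)$ or $(1,6)$. Your claim that the minimum sits at $u=v=1$ is therefore not established by this argument.

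To rescue the route you need an extra observation: both ratios are decreasing in the moving variable. Then $D$ is unimodal along each line, its minimum there lies at an endpoint, and you must still check the line $v=1$ (again by unimodality, now in $u$) and the line $u+v=n-2$.

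The paper avoids all of this by bounding the closed form directly. Take the two-column hook-content formula, with columns of lengths $u+v$ and $u$:
\[\dim V(\nu)=(n+1)n\cdot\frac{n\cdots(n-u+2)}{u!}\cdot\frac{(v+1)\,(n-1)\cdots(n-u-v+1)}{(u+v+1)!},\]
where the two descending products have $u-1$ and $u+v-1$ factors respectively.

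The middle factor is $\geqslant 1$ because $n\geqslant u$. The last factor is $\geqslant 1$ because $n-1\geqslant u+v+1$ gives $(n-1)\cdots(n-u-v+1)\geqslant (u+v+1)!/2$, while $v+1\geqslant 2$. Hence $\dim V(\mu)\geqslant \dim V(\nu)\geqslant n(n+1)>n^2-1$ immediately, with no monotonicity or endpoint analysis.
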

\begin{proof}
    It is clear $n\geqslant 4$. Consider item in dimension formula involving indices in $\{1,\ldots,u+v\}$. We see
    \begin{align*}
        &\dim V(\mu)\\
        \geqslant &\prod_{\substack{i\leqslant u\\ u<j\leqslant u+v}}\frac{1+j-i}{j-i}\prod_{\substack{i\leqslant u\\ u+v<j\leqslant n-1}}\frac{2+j-i}{j-i}\prod_{\substack{u<i\leqslant u+v\\ u+v<j\leqslant n-1}}\frac{1+j-i}{j-i}\prod_{i\leqslant u}\frac{2+n-i}{n-i}\prod_{u<i\leqslant u+v}\frac{1+n-i}{n-i}\\
        =&\frac{v+1}{u!(u+v+1)!}\cdot\big((n+1)\cdots(n-u+2)\big)\cdot\big(n\cdots(n-u-v+1)\big)\\
        =&(n+1)n\cdot\frac{n\cdots(n-u+2)}{u!}\cdot\frac{(n-1)\cdots(n-u-v+1)}{(u+v+1)!}\cdot(v+1)
    \end{align*}
    Here in the last line, $n\cdots(n-u+2)$ means $1$ if $u=1$.
    
    It is clear $n\cdots(n-u+2)\geqslant u!$. Since $u+v+1\leqslant n-1$, we see
    \[(n-1)\cdots(n-u-v+1)\geqslant \frac{(u+v+1)!}{v+1}.\]
    Thus
    \[\dim V(\mu)\geqslant (n+1)n>(n+1)(n-1)=\dim V(2,1,\ldots,1).\]
\end{proof}

\begin{lem}\label{uni-ineq2}
    Suppose $\mu=(\underbrace{a,\ldots,a}_u,b,\ldots,b)$ with $1\leqslant u\leqslant n-2$, $a>b\geqslant 1$. Then
    \[\dim V(\mu)\geqslant \dim V(2,1,\ldots,1).\]
    Equaility holds only when $\mu=(2,1,\ldots,1)$.
\end{lem}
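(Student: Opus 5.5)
We need dim V(μ) for μ=(a^u, b^{n-1-u}), with a>b≥1 and 1≤u≤n-2, and we must show it is at least (n+1)(n-1), with equality only at (2,1,…,1). The plan is to reduce to the smallest case by monotonicity and then compute that case exactly.

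\emph{Reduction.} By the dimension formula,
\[\dim V(\mu)=\frac{\prod_{i<j<n}(\mu_i-\mu_j+j-i)\prod_{i<n}(\mu_i+n-i)}{\prod_{i<j}(j-i)}.\]
This is the formula of Weyl for the weight $(\mu_1,\ldots,\mu_{n-1},0)$ of $\GL_n$, with all differences $\mu_i-\mu_j$ over $i<j\leqslant n$, $\mu_n=0$. Each factor is nondecreasing in every gap $\mu_i-\mu_{i+1}$. Moreover, factors with $i\leqslant u<j$ increase strictly with $a-b$, and factors with $j=n$ increase strictly with $b$.

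Writing $\mu=\lambda+\nu$, where $\lambda=(2,\ldots,2,1,\ldots,1)$ has $u$ twos and $\nu=(a-2,\ldots,a-2,b-1,\ldots,b-1)$, we have $\nu$ dominant. Hence $\dim V(\mu)\geqslant \dim V(\lambda)$, with strict inequality unless $\nu=0$; this is the same observation used in Proposition \ref{type1-condition}. So it suffices to treat $\mu=(2^u,1^{n-1-u})$ and show its dimension is at least $(n+1)(n-1)$, with equality exactly when $u=1$.

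\emph{Computation for $\lambda$.} View $\lambda$ as the $\GL_n$-weight $(2^u,1^{n-1-u},0)$. Tensoring with $\det^{-1}$ turns it into $(1^u,0^{n-1-u},-1)$. It is cleaner to apply the formula directly: the only nontrivial gaps are between blocks.
\begin{itemize}
\item[(i)] Pairs with $i\leqslant u<j<n$ contribute $\prod_{i\leqslant u<j\leqslant n-1}\frac{1+j-i}{j-i}$, which telescopes to $\binom{n-1}{u}$.
\item[(ii)] Pairs with $j=n$ contribute
\[\prod_{i\leqslant u}\frac{2+n-i}{n-i}\prod_{u<i<n}\frac{1+n-i}{n-i}=\frac{(n+1)n}{(n-u+1)(n-u)}\cdot(n-u).\]
\end{itemize}
The total is therefore
\[D(u)=\binom{n-1}{u}\frac{n(n+1)}{n-u+1}.\]
For $u=1$ this gives $(n-1)\,n(n+1)/n=(n+1)(n-1)$, consistent with the stated dimension of the isospectral model.

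\emph{Comparison of $u\geqslant 2$ with $u=1$.} We have
\[\frac{D(u+1)}{D(u)}=\frac{n-1-u}{u+1}\cdot\frac{n-u+1}{n-u}.\]
This ratio is at least $1$ roughly for $u\leqslant (n-2)/2$, so $D$ is unimodal in $u$. It is therefore enough to compare the endpoints $u=2$ and $u=n-2$ against $u=1$.
\begin{itemize}
\item At $u=2$: $D(2)=\frac{(n-1)(n-2)n(n+1)}{2(n-1)}=\frac{n(n+1)(n-2)}{2}$. This exceeds $(n+1)(n-1)$ once $n(n-2)>2(n-1)$, i.e. for $n\geqslant 4$.
\item At $u=n-2$: $D(n-2)=(n-1)\,n(n+1)/3$. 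This exceeds $(n+1)(n-1)$ iff $n>3$.
\item The case $n=3$ only allows $u=1$.
\end{itemize}
So for $u\geqslant 2$ the inequality is strict, and equality holds only at $u=1$ with $\nu=0$, i.e. $\mu=(2,1,\ldots,1)$.

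\emph{Main obstacle.} The step I expect to need the most care is the monotonicity and unimodality argument. It must be handled cleanly, possibly just by noting that $D(u)\geqslant\min(D(2),D(n-2))$ by log-concavity of the binomial coefficient times a monotone factor. The closed form for $D(u)$ also needs to be checked on small $n$.
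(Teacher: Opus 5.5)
Your proof is correct. Its first two steps match the paper's. The paper also bounds each Weyl factor from below by setting $a-b=1$, $a=2$, $b=1$, which is exactly your reduction to $\lambda=(2^u,1^{n-1-u})$ via the dominant weight $\nu$. It reaches the same closed form, written there as $\frac{(n+1)\cdots(n-u+2)}{u!}(n-u)$, which equals your $D(u)=\binom{n-1}{u}\frac{n(n+1)}{n-u+1}$.

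The routes differ only in how $D(u)>D(1)$ is shown for $u\geqslant 2$. The paper does not study the shape of $D$. It handles $u=2$ directly. For $u\geqslant 3$ it uses the crude estimate $(n-1)\cdots(n-u+2)(n-u)\geqslant (n-2)\cdots(n-u)\geqslant u!$, giving the uniform bound $D(u)\geqslant n(n+1)$. Your ratio argument gives more: the minimum over $2\leqslant u\leqslant n-2$ is $\min(D(2),D(n-2))$. The cost is that you need unimodality, which you left hedged with ``roughly''.

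The unimodality is in fact exact and easy. Since $(n-1-u)(n-u+1)=(n-u)^2-1$, one has $D(u+1)\geqslant D(u)$ if and only if $(n-u)(n-2u-1)\geqslant 1$. Because $n-u\geqslant 2$, this holds precisely when $u\leqslant (n-2)/2$, and $D(u+1)=D(u)$ never occurs. So $D$ strictly increases and then strictly decreases, and comparing the endpoints $u=2$ and $u=n-2$ suffices.

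State it this way rather than through your fallback remark about a log-concave binomial coefficient times a monotone factor. The factor $1/(n-u+1)$ is log-convex in $u$, so that remark alone does not give log-concavity of $D$. The direct ratio computation shows $D$ is log-concave all the same.
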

\begin{proof}
    It is clear $n\geqslant 3$. We have
    \begin{align*}
        \dim V(\mu)=&\prod_{\substack{i\leqslant u\\ u<j\leqslant n-1}}\frac{a-b+j-i}{j-i}\prod_{i\leqslant u}\frac{a+n-i}{n-i}\prod_{u<i\leqslant n-1}\frac{b+n-i}{n-i}\\
        \geqslant &\prod_{\substack{i\leqslant u\\ u<j\leqslant n-1}}\frac{1+j-i}{j-i}\prod_{i\leqslant u}\frac{2+n-i}{n-i}\prod_{u<i\leqslant n-1}\frac{1+n-i}{n-i}\\
        =&\frac{(n+1)\cdots(n-u+2)}{u!}\cdot (n-u)
    \end{align*}
    Denote
    \[A=\frac{(n+1)\cdots(n-u+2)}{u!}\cdot (n-u).\]
    
    \textbf{Case }$u=1$. We see $A=(n+1)(n-1)\dim V(2,1,\ldots,1)$. However, if $a>2$ or $b>1$, then above inequalities become strict, hence equality holds here only when $a=2$, $b=1$.

    \textbf{Case }$u=2$. Since $n\geqslant 4$, we have
    \[A=\frac{(n+1)n(n-2)}{2}>(n+1)(n-1)=\dim V(2,1,\ldots,1).\]

    \textbf{Case }$u\geqslant 3$. Then
    \[A=(n+1)n\cdot\frac{(n-1)\cdots(n-u+2)}{u!}\cdot(n-u).\]
    Since $u\leqslant n-2$,
    \[\big((n-1)\cdots(n-u+2)\big)\cdot (n-u)\geqslant (n-2)\cdots(n-u)\geqslant u!.\]
    Thus
    \[A\geqslant (n+1)n>(n+1)(n-1)=\dim V(2,1,\ldots,1).\]
\end{proof}

\begin{lem}\label{uni-ineq3}
    Suppose $\mu=(\underbrace{a,\ldots,a}_u,0,\ldots,0)$ with $2\leqslant u\leqslant n-2$, $a\geqslant 2$. Then
    \[\dim V(\mu)>\dim V(2,1,\ldots,1).\]
\end{lem}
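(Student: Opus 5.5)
The plan is to argue exactly as in Lemmas \ref{uni-ineq1} and \ref{uni-ineq2}: bound $\dim V(\mu)$ below by replacing $a$ with $2$, and then show the resulting explicit number already exceeds $(n+1)(n-1)$. For $\mu=(a,\ldots,a,0,\ldots,0)$ with $a$ repeated $u$ times, the factors with $i<j\leqslant u$ or $u<i<j\leqslant n-1$ equal $1$. The Weyl dimension formula therefore reduces to
\[\dim V(\mu)=\prod_{\substack{i\leqslant u\\ u<j\leqslant n-1}}\frac{a+j-i}{j-i}\prod_{i\leqslant u}\frac{a+n-i}{n-i},\]
since the factors $\prod_{u<i\leqslant n-1}\frac{n-i}{n-i}$ are trivial. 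Every factor is strictly increasing in $a$, so $\dim V(\mu)\geqslant\dim V(\underbrace{2,\ldots,2}_u,0,\ldots,0)$. This also follows from the monotonicity principle ``adding a nonzero dominant weight increases dimension'' used earlier. It therefore suffices to treat $a=2$.

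For $a=2$, merge the two products into a single product over $i\leqslant u<j\leqslant n$, with $j=n$ giving the second product:
\[\dim V(2^u,0^{n-u})=\prod_{i=1}^{u}\prod_{j=u+1}^{n}\frac{j-i+2}{j-i}.\]
For fixed $i$ the inner product telescopes to $\frac{(n-i+2)(n-i+1)}{(u-i+2)(u-i+1)}$. Taking the product over $i$ gives
\[\dim V(2^u,0^{n-u})=\frac{\binom{n+1}{u}\binom{n}{u}}{u+1},\]
which is the familiar dimension of $\Sym^2$-type Schur functors. I will double-check this closed form on small cases, for example $u=1$ giving $\binom{n+1}{2}$, the dimension of $\Sym^2\BC^n$, which matches.

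The remaining step is the inequality $\binom{n+1}{u}\binom{n}{u}>(u+1)(n+1)(n-1)$ for $2\leqslant u\leqslant n-2$ (so $n\geqslant 4$). Under the symmetry $u\mapsto n-u$ we have $\binom{n}{u}=\binom{n}{n-u}$, and $\binom{n+1}{u}$ is unimodal. The cleanest route is to show that the ratio of the left side to $u+1$ increases in $u$ for $u\leqslant n/2$ and separately handle $u$ near $n-2$. Alternatively, use the crude bounds $\binom{n}{u}\geqslant\binom{n}{2}$ and $\binom{n+1}{u}\geqslant\binom{n+1}{2}$, valid for $2\leqslant u\leqslant n-2$ and $2\leqslant u\leqslant n-1$. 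These give a left side of at least $\frac{n^2(n+1)(n-1)}{4}$, so it suffices that $n^2/4>u+1$. This holds since $u+1\leqslant n-1<n^2/4$ for $n\geqslant 4$; at $n=4$ we get $u=2$ and $4>3$. So the inequality is strict. The only real obstacle is verifying the telescoped closed form cleanly; the final estimate is easy with the crude binomial bounds.
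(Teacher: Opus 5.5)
Your proof is correct and follows the paper's route. You reduce to $a=2$ by monotonicity of each factor, then telescope the product to $\frac{\binom{n+1}{u}\binom{n}{u}}{u+1}$, which is exactly the paper's quantity $A$, and compare it with $(n+1)(n-1)$. Your final estimate via $\binom{n}{u}\geqslant\binom{n}{2}$ and $\binom{n+1}{u}\geqslant\binom{n+1}{2}$ is slightly cleaner than the paper's, since it covers $n=4$ uniformly instead of as a separate case.
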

\begin{proof}
    It is clear $n\geqslant 4$. We have
    \[\dim V(\mu)=\prod_{\substack{i\leqslant u\\ u<j\leqslant n}}\frac{a+j-i}{j-i}\geqslant \prod_{\substack{i\leqslant u\\ u<j\leqslant n}}\frac{2+j-i}{j-i}=\frac{(n+1)\cdots(n-u+2)}{(u+1)!}\cdot\frac{n\cdots (n-u+1)}{u!}.\]
    Denote
    \[A=\frac{(n+1)\cdots(n-u+2)}{(u+1)!}\cdot\frac{n\cdots (n-u+1)}{u!}\]

    \textbf{Case }$n=4$. Then $u=2$, we see
    \[A=\frac{(n+1)n^2(n-1)}{12}>(n+1)(n-1)=\dim V(2,1,\ldots,1).\]

    \textbf{Case }$n\geqslant 5$. Write
    \[A=(n+1)n\cdot\frac{n\cdots(n-u+2)}{(u+1)!}\frac{(n-1)\cdots(n-u+1)}{u!}.\]
    It is clear $(n-1)\cdots (n-u+1)\geqslant u!$. Since $u+1\leqslant n-1$, we see
    \[(u+1)\cdots 4\leqslant (n-1)\cdots (n-u+2).\]
    Thus
    \[A\geqslant (n+1)n\cdot\frac{n}{3!}=\frac{(n+1)n^2}{6}>(n+1)(n-1)=\dim V(2,1,\ldots,1).\]
\end{proof}

\begin{lem}\label{uni-ineq4}
    Suppose $n\geqslant 3$, $\mu=(n,\ldots,n)$. Then
    \[\dim V(\mu)>\dim V(2,1,\ldots,1).\]
\end{lem}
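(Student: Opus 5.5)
The plan is a direct evaluation of the Weyl dimension formula followed by an elementary comparison. For $\mu=(n,\ldots,n)\in\BZ^{n-1}$ all differences $\mu_i-\mu_j$ with $i<j<n$ vanish. So the first product in the numerator cancels exactly against the corresponding part $\prod_{i<j<n}(j-i)$ of the denominator. What remains is the factor coming from the terms involving the index $n$:
\[\dim V(\mu)=\prod_{i=1}^{n-1}\frac{n+n-i}{n-i}=\prod_{k=1}^{n-1}\frac{n+k}{k}=\frac{(2n-1)!}{n!\,(n-1)!}=\binom{2n-1}{n}.\]
(Equivalently, $V(n,\ldots,n)$ is the $n$-th symmetric power of the dual standard representation, and the same count results.) The statement is therefore reduced to the inequality $\binom{2n-1}{n}>(n+1)(n-1)=n^2-1$ for $n\geqslant 3$.

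I will prove this by induction on $n$. For the base case $n=3$, we have $\binom{5}{3}=10>8$. For the inductive step, I compare consecutive ratios. On the left,
\[\frac{\binom{2n+1}{n+1}}{\binom{2n-1}{n}}=\frac{(2n+1)(2n)}{(n+1)n}=\frac{2(2n+1)}{n+1}\geqslant\frac{7}{2}\quad(n\geqslant 3).\]
On the right,
\[\frac{(n+1)^2-1}{n^2-1}=\frac{n(n+2)}{(n-1)(n+1)}\leqslant\frac{15}{8}\quad(n\geqslant 3),\]
which holds because the right-hand ratio decreases in $n$. Since the left-hand side grows by a strictly larger factor at each step, the strict inequality propagates from $n$ to $n+1$.

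There is no real obstacle here. The only point needing care is the bookkeeping in the first step: checking that the denominator $\prod_{i<j\leqslant n}(j-i)$ splits into the part indexed by $j<n$, which cancels, and the part $\prod_{i<n}(n-i)=(n-1)!$, which produces the binomial coefficient. Alternatively, one can avoid induction by bounding $\binom{2n-1}{n}\geqslant\binom{2n-1}{2}=(2n-1)(n-1)>(n+1)(n-1)$ for $n\geqslant 3$. This uses unimodality of binomial coefficients together with $2\leqslant n\leqslant 2n-3$. This one-line bound is the argument I would actually write.
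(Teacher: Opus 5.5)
Your proposal is correct and is essentially the paper's argument. The paper also reduces $\dim V(\mu)$ to $\prod_{i<n}\frac{2n-i}{n-i}$ and bounds it below by $\frac{(2n-1)(2n-2)}{2}=(2n-1)(n-1)$. It gets the remaining factor $\frac{(2n-3)\cdots(n+1)}{(n-1)\cdots 3}\geqslant 1$ by pairing terms where you cite unimodality, and then compares with $(n+1)(n-1)$. Your inductive variant is also valid but not needed.
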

\begin{proof}
    We have
    \[\dim V(\mu)=\prod_{i\leqslant n-1}\frac{2n-i}{n-i}.\]
    When $n=3$, we have
    \[\dim V(\mu)=10>8=\dim V(2,1).\]
    When $n\geqslant 4$, we have
    \begin{align*}
    \dim V(\mu)=&\frac{(2n-1)(2n-2)}{2}\cdot\frac{(2n-3)\cdots(n+1)}{(n-1)\cdots 3}\\
    \geqslant&(2n-1)(n-1)\\
    >&(n+1)(n-1)=\dim V(2,1,\ldots,1).
    \end{align*}
    
\end{proof}

We can now prove Proposition \ref{H-fix-min}.

\begin{proof}[Proof of Proposition \ref{H-fix-min}]
    By Lemma \ref{n-div}, we know $n\mid\sum\mu_i$. 
    
    \textbf{Case (i)}: Suppose $\mu$ has at least three stairs, i.e. $\mu_1>\mu_i>\mu_j$ for some $1<i<j$. Then we conclude from Lemma \ref{uni-ineq1}.

    \textbf{Case (ii)}: Suppose $\mu$ has two stairs, i.e. $\mu=(\underbrace{a,\ldots,a}_u,b,\ldots,b)$ with $1\leqslant u\leqslant n-2$, $a>b$. If $b\geqslant 1$, we conclude from Lemma \ref{uni-ineq2}. 
    
    Assume $b=0$. Then $a\geqslant 2$ since $n\mid ua$. If $u\geqslant 2$, then we conclude from Lemma \ref{uni-ineq3}. If $u=1$, then $a$ is divislbe by $n$. In particular, $a\geqslant 3$. Then
    \[\dim V(\mu)\geqslant \dim V(3,0,\ldots,0)=\frac{n(n+1)(n+2)}{6}>(n+1)(n-1)=\dim V(2,1,\ldots,1).\]

    \textbf{Case (iii)}: Suppose $\mu$ has only one stair, i.e. $\mu=(a,\ldots,a)$. Then $a$ is divisible by $n$. When $n=2$, $\mu=(a)$, and $2\mid a$, hence only when $a=2$ achieves minimal dimension.

    Assume $n\geqslant 3$. Then Lemma \ref{uni-ineq4} implies
    \[\dim V(\mu)\geqslant \dim V(n,\ldots,n)>\dim V(2,1,\ldots,1).\]
\end{proof}

\bibliographystyle{plain}
\bibliography{equiv_ref}
	
\end{document}